\documentclass[a4paper,twoside,11pt]{article}

\usepackage{a4wide, fancyhdr, amsmath, amssymb, mathtools, yfonts}
\usepackage{mathrsfs}
\usepackage{graphicx}
\usepackage{tikz}
\usepackage[all]{xy}
\usepackage[utf8]{inputenc}
\usepackage{amsthm}
\usepackage[english]{babel}
\usepackage{chngcntr}
\usepackage{ifthen}
\usepackage{calc}
\usepackage{hyperref}
\usepackage[capitalize]{cleveref}
\usepackage{authblk}
\usepackage{tikz-cd}
\usepackage{adjustbox}
\usepackage{todonotes}
\numberwithin{equation}{section}


\setlength\headheight{20pt}
\addtolength\topmargin{-10pt}
\addtolength\footskip{20pt}

\newcommand{\Z}{\mathbb{Z}}
\newcommand{\Q}{\mathbb{Q}}

\newcommand\Gal{\mathrm{Gal}}


\newcommand{\mC}{\mathcal{C}}

\newcommand{\Piy}{\Pi \mathbf{y}}
\newcommand{\Piz}{\Pi \mathbf{z}}
\newcommand{\PiY}{\Pi \mathbf{Y}}

\newcommand{\hy}[1]{\hat{y}_{#1}}
\newcommand{\hyz}[2]{\hat{y}_{{#1}{#2}}}
\newcommand{\hyi}{\hat{y}_i}
\newcommand{\hyij}{\hat{y}_{ij}}

\newtheorem{lemma}{Lemma}[section]
\newtheorem{theorem}[lemma]{Theorem}
\newtheorem{proposition}[lemma]{Proposition}
\newtheorem{corollary}[lemma]{Corollary}
\newtheorem{mydef}[lemma]{Definition}

\newtheorem{remark}{Remark}

\title{\vspace{-\baselineskip}\sffamily\bfseries The $4$-rank of class groups of $K(\sqrt{n})$}
 
\author[1]{Peter Koymans\thanks{Vivatsgasse 7, 53111  Bonn, Germany, koymans@mpim-bonn.mpg.de}}
\author[1]{Adam Morgan\thanks{Vivatsgasse 7, 53111  Bonn, Germany, a.j.morgan44@gmail.com}}
\author[1]{Harry Smit\thanks{Vivatsgasse 7, 53111  Bonn, Germany, smit@mpim-bonn.mpg.de}}
\affil[1]{Max Planck Institute for Mathematics, Bonn}
 
\date{\today}


\begin{document}
\maketitle

\begin{abstract}
Let $K/\Q$ be a quadratic extension. In this paper we study the $4$-rank of the class group $\text{Cl}(K(\sqrt{n}))$, where $n$ varies over squarefree rational integers. We show that for $100\%$ of squarefree $n$, the $4$-rank is given by an explicit formula involving the $2$-rank of $\text{Cl}(K)$ and the number of prime factors of $n$ which are inert in $K/\mathbb{Q}$.   
\end{abstract}

\section{Introduction}

Class groups are among the most fundamental objects in  number theory, yet they remain relatively inaccessible, with many problems concerning their behaviour still open. To further our understanding of class groups, a fruitful philosophy has been to ask if one can at least understand their behaviour \textit{on average}. To this end, in \cite{CL}  Cohen and Lenstra  gave a beautiful conjecture predicting the behaviour of class groups in the family of all imaginary quadratic number fields, which can be roughly paraphrased as saying that, when these fields are ordered by discriminant, a given abelian group appears as a class group with probability inversely proportional to the size of its automorphism group. These original heuristics have subsequently been extended in many different directions, with important instances being the works of Cohen--Martinet \cite{CM}, Gerth \cite{G}, Bartel--Lenstra \cite{BL} and Wang--Wood \cite{WW}.   

 Several important cases of these conjectures are now known. In the case of quadratic fields,  Fouvry--Kl\"uners show  in \cite{FK,FK2} that the $4$-rank of the class group behaves as predicted by the Cohen--Lenstra heuristics, modified by Gerth to take account of the systematic subgroup  afforded by genus theory. In recent breakthrough work \cite{Smith},  Smith has  extended this work significantly, proving that for imaginary quadratic fields, the whole $2$-Sylow subgroup of the class group  behaves according to the Cohen--Lenstra heuristics. Away from the $2$-part, the work of Davenport--Heilbronn \cite{DH} determines the average size of the $3$-torsion subgroup. For more  general families of fields, see the works of Alberts--Klys \cite{AK}, Bhargava--Varma \cite{BV}, Klys \cite{K}, and Koymans--Pagano \cite{KP2}.

In the present work, we consider the behaviour of $4$-ranks of class groups in certain families of biquadratic extensions. Here for a finite abelian group $A$, the $4$-rank of $A$ is defined as
\[\textup{rk}_{4}\, A = \dim_{\mathbb{F}_2} 2A/4A=\dim_{\mathbb{F}_2} 2(A[4]).\] 
Fix a quadratic extension $K/\mathbb{Q}$ with discriminant $\Delta$, and for a squarefree integer $n$, denote by $K_n$ the biquadratic field $K(\sqrt{n})$. Varying over all squarefree $n$ gives a natural family in which to study the behaviour of class groups. In the case $K=\mathbb{Q}(i)$, in recent work of Fouvry, Pagano and the first author \cite{FKP}, it was shown that for $100\%$ of positive odd squarefree $n$ (with respect to the natural ordering) one has 
\begin{equation} \label{eq:Q_i_formula}
\textup{rk}_4\, \textup{Cl}(K_n)=\omega_3(n)-1
\end{equation}
where $\omega_3(n)$ is the number of prime divisors of $n$ which are congruent to $3$ modulo $4$. Our main result extends this to arbitrary quadratic $K/\mathbb{Q}$. To avoid assuming that $K$ has class number $1$ our method of proof, which we explain below, is essentially independent of that work. In what follows, for an integer $n$  we write $\omega(n)$ for the number of distinct prime factors of $n$, and $\omega_{\textup{inert}}(n)$ for the number of distinct prime factors of $n$ which are inert in $K/\mathbb{Q}$. When $n$ is squarefree  we write $\Delta_n$ for the discriminant of $\mathbb{Q}(\sqrt{n})$ (thus $\Delta_n \in \{n,4n\}$).  

\begin{theorem} [=\Cref{thm:combined_main}]
\label{tMain}
Let $K/\mathbb{Q}$ be a quadratic extension with discriminant $\Delta$.  Then for $100\%$ of squarefree $n$ (in the sense of \eqref{eq:100percent}) we have 
\[\textup{rk}_4\,\textup{Cl}(K_n) =\omega_{\textup{inert}}(\Delta_n)+\omega(\Delta)+\dim_{\mathbb{F}_2}\textup{Cl}(K)[2]~-~\begin{cases}3~~&~~K/\mathbb{Q}\textup{ real,}\\ 2~~&~~K/\mathbb{Q}\textup{ imaginary}.\end{cases}\]
\end{theorem}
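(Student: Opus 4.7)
My plan follows the Rédei strategy as adapted to families by Fouvry--Kl\"uners \cite{FK, FK2} and Fouvry--Koymans--Pagano \cite{FKP}: first compute $\textup{Cl}(K_n)[2]$ via genus theory, then express the $4$-rank as $\textup{rk}_2\,\textup{Cl}(K_n)$ minus the rank of an $\mathbb{F}_2$-valued Rédei matrix $M_n$ built from generalized quadratic residue symbols, and finally show that $M_n$ has the expected generic rank for $100\%$ of $n$.

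\textbf{Algebraic setup (Steps 1 and 2).} Applying Chevalley's ambiguous class number formula to the relative quadratic extension $K_n/K$ would compute $\textup{rk}_2\, \textup{Cl}(K_n)$ exactly. The ramified primes of $K_n/K$ are, up to a bounded set depending on $\Delta$, the primes of $K$ above prime divisors of $n$: inert primes of $K/\mathbb{Q}$ contribute one prime each and split primes contribute two. Combining this count with the contribution of $\textup{Cl}(K)[2]$ and the signature-dependent unit index $[\mathcal{O}_K^\times : \mathcal{O}_K^\times \cap N_{K_n/K}(K_n^\times)]$ (which is the source of the constants $c \in \{2,3\}$) yields an explicit formula for the $2$-rank; note that this $2$-rank is strictly larger than the target $4$-rank in the theorem, the gap coming primarily from split primes. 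Next, one picks generators of $\textup{Cl}(K_n)[2]$ coming from classes of ramified primes together with lifts of generators of $\textup{Cl}(K)[2]$. An $\mathbb{F}_2$-combination of these generators lies in $2\,\textup{Cl}(K_n)$ if and only if an Artin symbol in a suitable unramified $\mathbb{Z}/2$-extension of $K_n$ vanishes; via class field theory this translates into a linear condition in generalized quadratic residue symbols $\leg{\pp}{\qq}$ between primes $\pp,\qq$ of $K$ dividing $n$ (together with symbols involving primes above $\Delta$ and generators of $\textup{Cl}(K)[2]$), which assemble into the matrix $M_n$.

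\textbf{Analytic input (Step 3) --- the main obstacle.} One must show that for $100\%$ of $n$ the matrix $M_n$ has the generic rank, forcing $\textup{rk}_4\,\textup{Cl}(K_n) = \textup{rk}_2\,\textup{Cl}(K_n) - \textup{rk}\,M_n$ to give the formula of \Cref{tMain}. This amounts to cancellation estimates of the form $\sum_{n \leq X} \prod_{i,j} \leg{\pp_i}{\qq_j}^{\epsilon_{ij}} = o(\#\{n \leq X\})$ for every nontrivial choice of exponent matrix $(\epsilon_{ij})$ indexing the rows/columns of $M_n$. The novelty compared to \cite{FKP}, where $K=\mathbb{Q}(i)$ has class number one, is twofold: one must use quadratic reciprocity in the number field $K$ to open the symbols and then bound Hecke (ray class) characters of $K$ rather than Dirichlet characters; and the nontrivial contribution of $\textup{Cl}(K)[2]$ enlarges the index set of the characters to be summed, introducing additional oscillating factors. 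I expect the analytic backbone to combine a large sieve for quadratic characters over $\mathcal{O}_K$ in the spirit of Heath-Brown's quadratic large sieve, with a Siegel--Walfisz-type estimate to handle characters of small conductor; propagating these bounds through the linear algebra of $M_n$ (over all exponent matrices $(\epsilon_{ij})$) to get the $100\%$ statement is where I anticipate the main technical difficulty.
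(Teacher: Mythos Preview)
Your plan is a viable strategy---the classical R\'edei-matrix approach lifted to the relative extension $K_n/K$---but the paper takes a genuinely different route, specifically engineered to avoid the obstacle you flag in Step~3.

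Rather than computing $\textup{rk}_2\,\textup{Cl}(K_n)$ via Chevalley and then analysing a matrix of symbols $\leg{\pp}{\qq}$ between primes of $K$, the paper works cohomologically. It first identifies $2\,\textup{Cl}^\vee(K_n)[4]$ with a Selmer group $\textup{Sel}_{\chi_n}(G_K,\Z/2\Z)\subseteq K^\ast/K^{\ast 2}$, up to an explicit kernel and cokernel (Theorem~\ref{thm:main_algebraic_thm}). The key move is then to exploit the exact sequence
\[
H^1(G_\Q,\Z/2\Z)\xrightarrow{\textup{res}}H^1(G_K,\Z/2\Z)\xrightarrow{\textup{cores}}H^1(G_\Q,\Z/2\Z)
\]
to push this Selmer group down to $\Q$: one shows that for $100\%$ of $n$ the image $X_n$ of $\textup{Sel}_{\chi_n}$ under corestriction is trivial, and that the dual Selmer group $Y_n$ of $\textup{res}^{-1}(\textup{Sel}_{\chi_n})$ is also trivial. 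The Greenberg--Wiles formula, applied over $\Q$, then gives the dimension of $\textup{res}^{-1}(\textup{Sel}_{\chi_n})$ directly, and the formula of the theorem drops out.

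What this buys is that the entire analytic computation takes place in $\Q^\ast/\Q^{\ast 2}$: the first-moment sums for $|X_n|$ and $|Y_n|$ involve only ordinary Jacobi symbols over $\Z$, handled by the standard Fouvry--Kl\"uners package (double oscillation plus Siegel--Walfisz over $\Q$), with no Hecke characters of $K$ and no large sieve over $\mathcal{O}_K$. The nontriviality of $\textup{Cl}(K)$, which in your approach forces ad~hoc choices of ideal-class representatives and muddies the definition of the R\'edei symbols, is absorbed entirely on the algebraic side: the term $\dim_{\mathbb{F}_2}\textup{Cl}(K)[2]$ in the final formula appears as $\dim_{\mathbb{F}_2}\textup{Sel}(G_K,\Z/2\Z)$, the target of the surjection in Lemma~\ref{lCores2}. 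Your route should in principle succeed, but carrying out Step~3 over a general $K$ is exactly the complication the paper's descent-to-$\Q$ trick is designed to eliminate.
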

\noindent For a brief discussion of the implicit error term in \Cref{tMain}, see \Cref{error_term_discussion}. 

In the case that $K/\mathbb{Q}$ is imaginary, genus theory gives 
\begin{equation} \label{eq:genus_theory}
\dim_{\mathbb{F}_2}\textup{Cl}(K)[2]=\omega(\Delta)-1
\end{equation} and the above formula simplifies to show that, for $100\%$ of squarefree $n$, we have 
\[
\textup{rk}_4\,\textup{Cl}(K_n) = \omega_{\textup{inert}}(\Delta_n) + 2\omega(\Delta) - 3.
\]
Since a prime is inert in $\mathbb{Q}(i)/\mathbb{Q}$ if and only if it is congruent to $3$ modulo $4$, this recovers \eqref{eq:Q_i_formula}. If one varies over prime numbers $p$ instead of squarefree integers, then the $4$-rank of $K_p$ was studied by Chan--Milovic \cite[Theorem 1]{CM1} but under some more restrictive assumptions on the quadratic field $K$. We remark also that it is not the case that the formula in \Cref{tMain} simply holds for \textit{all} squarefree $n$. When $K=\mathbb{Q}(i)$, see that table following  \cite[Theorem 1.3]{FKP} for examples when $\textup{rk}_4\textup{Cl}(K_n)$ is strictly greater than $\omega_3(n)-1$.

An easy consequence of \Cref{tMain} is that, as $n$ varies, $\textup{rk}_4\,\textup{Cl}(K_n)$ is, roughly speaking, normally distributed with mean and variance $\log \log n/2$. This follows from a variant of the  Erd\H{o}s--Kac theorem \cite[Theorem 1.3]{KLO}, which shows that the same is true of the function $\omega_\textup{inert}(\Delta_n)$. More precisely, let $\Phi(z)$ be the cumulative distribution function of the normal distribution with mean $0$ and variance $1$, i.e.
\[
\Phi(z) = \frac{1}{\sqrt{2\pi}} \int_{-\infty}^z e^{-t^2/2} dt.
\]
Also define $A(X) = \log \log X/2$ and $B(X) = \sqrt{A(X)}$.  

\begin{corollary}[=\Cref{cor:erdos_kac}] \label{erdos_kac_cor}
Let $K/\mathbb{Q}$ be a quadratic extension. Then for all real numbers $z$ we have 
\[
\lim_{X \rightarrow \infty} \frac{|\{n : |n| \leq X, \ n \textup{ squarefree}, ~\textup{rk}_4\,\textup{Cl}(K_n)- A(X) < z B(X)\}|}{|\{n : |n| \leq X, \ n \textup{ squarefree}\}|} = \Phi(z).
\]
\end{corollary}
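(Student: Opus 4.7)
The plan is to reduce the corollary to a straightforward application of the Erd\H{o}s--Kac type theorem of \cite{KLO} cited by the authors. Concretely, \Cref{tMain} says that outside an exceptional set $\mathcal{E}$ of squarefree integers of density zero (in the sense of \eqref{eq:100percent}), one has
\[
\textup{rk}_4\,\textup{Cl}(K_n) = \omega_{\textup{inert}}(\Delta_n) + C_K,
\]
where $C_K = \omega(\Delta) + \dim_{\mathbb{F}_2}\textup{Cl}(K)[2] - c$, with $c \in \{2,3\}$ depending only on the signature of $K$, is a constant independent of $n$. Since $\mathcal{E}$ contributes $o(X)$ to the numerator of the limit defining the corollary (and similarly is harmless in the denominator), it can be discarded and I may assume the identity above holds for every squarefree $n$ being counted.

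After discarding $\mathcal{E}$, the condition $\textup{rk}_4\,\textup{Cl}(K_n) - A(X) < zB(X)$ becomes
\[
\omega_{\textup{inert}}(\Delta_n) - A(X) < zB(X) - C_K.
\]
I would then invoke \cite[Theorem 1.3]{KLO}, which is precisely the Erd\H{o}s--Kac law for the additive function $\omega_{\textup{inert}}$ on squarefree integers $|n|\leq X$, with mean $A(X) = \tfrac{1}{2}\log\log X$ and standard deviation $B(X)$; the mean arises because by Chebotarev exactly half of the rational primes are inert in $K/\Q$, so Mertens' theorem gives $\sum_{p\leq X,\,p\ \text{inert}} 1/p = A(X) + O(1)$. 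Since $\Delta_n \in \{n,4n\}$, replacing $\omega_{\textup{inert}}(n)$ by $\omega_{\textup{inert}}(\Delta_n)$ changes the count by at most $1$, and the additive shift $C_K$ is $O(1)$; as $B(X) \to \infty$, both of these corrections are $o(B(X))$, so by the uniform continuity of $\Phi$ the limit is unaffected. This yields the stated value $\Phi(z)$.

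The only real obstacle is the bookkeeping described above: confirming that the density-zero exceptional set of \Cref{tMain} matches the counting function used in \Cref{erdos_kac_cor}, and tracking that the finite additive constants drop out in the limit. There is no new number-theoretic input beyond \Cref{tMain} and the cited Erd\H{o}s--Kac theorem for $\omega_{\textup{inert}}$.
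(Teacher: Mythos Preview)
Your proposal is correct and follows essentially the same route as the paper: reduce via \Cref{tMain} to the Erd\H{o}s--Kac law for $\omega_{\textup{inert}}$ from \cite[Theorem~1.3]{KLO}, then absorb the bounded shifts (the constant $C_K$, the discrepancy between $\omega_{\textup{inert}}(n)$ and $\omega_{\textup{inert}}(\Delta_n)$, and the $O(1)$ difference between the normalizations) using that $B(X)\to\infty$ and $\Phi$ is continuous. The paper carries out the last step via an explicit $\varepsilon$-sandwiching of the limsup and liminf, which is exactly the argument underlying your appeal to continuity of $\Phi$.
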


\begin{remark}
\textup{There is a known analogy between $4$-ranks of class groups and $2$-Selmer ranks of elliptic curves, apparent in particular from the works of Heath--Brown \cite{HB,HB2} and Fouvry--Kl\"uners  \cite{FK}, and extended to higher $2$-power ranks in the work of Smith \cite{Smith}. The analogous problem to ours on the elliptic curve side is considered in work of the second author and Paterson \cite{MP}, with the analogues of \Cref{tMain} and \Cref{erdos_kac_cor} being, respectively, Theorems 1.3 and 1.1 therein. For other instances of  Erd\H{o}s--Kac type distributions arising in the study of Selmer groups, see work of Klagsbrun--Lemke Oliver \cite{KLO} and Xiong--Zaharescu \cite{XZ}.}
\end{remark}
 
As a result of \Cref{erdos_kac_cor}, one sees that for any fixed $z\geq 0$, the proportion of fields $K_n$ for which $\textup{rk}_4\,\textup{Cl}(K_n)\leq z$ tends to $0$ as $n\rightarrow \infty$. This is markedly different to the behaviour of the $4$-rank of the class group of the quadratic fields $\mathbb{Q}(\sqrt{n})$ for varying $n$, a positive proportion of which have $4$-rank equal to $r$ for every integer $r\geq 0$, as follows from the aforementioned work of Fouvry-Klüners \cite{FK}.  In this respect, the behaviour of the $4$-rank of the fields $K_n$ is more closely analagous to the behaviour of the $2$-rank of the class group of quadratic number fields which, as mentioned previously, is well understood by genus theory. To explain why this is, and to describe our additional results, let us sketch the proof of \Cref{tMain}.

\subsection{Sketch of the proof of \Cref{tMain}}
To access the $4$-rank of $\textup{Cl}(K_n)$ we study the $\mathbb{F}_2$-vector space $2\cdot \textup{Cl}^\vee(K_n)[4]$, where $\textup{Cl}^\vee(K_n)[4]$ denotes the $4$-torsion in the dual of the class group. The dimension of $2\cdot \textup{Cl}^\vee(K_n)[4]$ is equal to $\textup{rk}_4\,\textup{Cl}(K_n)$. By class field theory we may  identify $2\cdot \textup{Cl}^\vee(K_n)[4]$ with the  group 
\[
2~\textup{Hom}_{\textup{e/w-ur}}(G_{K_n},\mathbb{Z}/4\mathbb{Z}) \subseteq \textup{Hom}_{\textup{e/w-ur}}(G_{K_n},\mathbb{Z}/2\mathbb{Z})
\] 
consisting of the everywhere-unramified $\mathbb{Z}/2\mathbb{Z}$-valued characters of the absolute Galois group of $K_n$ which lift to everywhere-unramified $\mathbb{Z}/4\mathbb{Z}$-valued characters. There is a natural restriction map on characters
\[\textup{res}_{K_n/K}\colon \textup{Hom}(G_K,\mathbb{Z}/2\mathbb{Z})\longrightarrow \textup{Hom}(G_{K_n},\mathbb{Z}/2\mathbb{Z}),\]
so it is natural to ask how much of $2\cdot \textup{Hom}_{\textup{e/w-ur}}(G_{K_n},\mathbb{Z}/4\mathbb{Z})$ consists of elements arising as restriction from characters of $K$. To answer this, in \Cref{sec:selmer} we define, for each $n$, a subgroup $\textup{Sel}_{\chi_n}(G_K, \Z/2\Z)$ of $\textup{Hom}(G_K,\mathbb{Z}/2\mathbb{Z})$ consisting of characters satisfying a specified set of local conditions. As indicated by the notation, this group is naturally viewed as a Selmer group, and the general framework of Selmer structures (see e.g. \cite[II.1]{Mazur_Rubin}) provides a convenient languange for studying it. 
 Our key algebraic result, \Cref{thm:main_algebraic_thm}, shows that   for `generic' squarefree $n$, restriction induces a homomorphism
\[ \textup{Sel}_{\chi_n}(G_K, \Z/2\Z) \longrightarrow 2\cdot \textup{Cl}^\vee(K_n)[4],\]
the dimension of the kernel and cokernel of which is explicit and independent of $n$. In fact, this does not require that $K/\mathbb{Q}$ be quadratic, and may be of independent interest. Furthermore, this gives a rather explicit handle on $\textup{Cl}^\vee(K_n)[4]$ as a $\Gal(K_n/K)$-module. At this point, one could attempt to prove \Cref{tMain} by working explicitly with the subgroups  
\[\textup{Sel}_{\chi_n}(G_K, \Z/2\Z) \subseteq H^1(K,\mathbb{Z}/2\mathbb{Z})\cong K^{\ast}/K^{\ast 2}.\]
However, the possible nontriviality of the class group of $K$ makes this approach unwieldy. We  avoid this by using the basic exact sequence
\begin{equation} \label{eq:cor_sequence}
H^1(G_\mathbb{Q},\mathbb{Z}/2\mathbb{Z})\stackrel{\textup{res}_{K/\mathbb{Q}}}{\longrightarrow } H^1(G_K,\mathbb{Z}/2\mathbb{Z})\stackrel{\textup{cores}_{K/\mathbb{Q}}}{\longrightarrow}H^1(G_\mathbb{Q},\mathbb{Z}/2\mathbb{Z}) 
\end{equation}
which is readily proven by Shapiro's lemma (see \Cref{sec:cores_map}). 
 The bulk of the analytic work is involved in showing that
 \begin{itemize}
 \item[(1)] the subgroup $X_n=\textup{cores}_{K/\mathbb{Q}}(\textup{Sel}_{\chi_n}(G_K, \Z/2\Z))$ of $\mathbb{Q}^{\ast}/\mathbb{Q}^{\ast 2}$ is trivial for $100\%$ of squarefree $n$,
 \item[(2)] the Selmer group $Y_n\subseteq \mathbb{Q}^{\ast}/\mathbb{Q}^{\ast 2}$ dual to $\textup{res}_{K/\mathbb{Q}}^{-1}(\textup{Sel}_{\chi_n}(G_K, \Z/2\Z))$ (see \Cref{sec:dual_Selmer}) is trivial for $100\%$ of squarefree $n$.
 \end{itemize}
 We accomplish both steps by adapting a method due to Heath--Brown \cite{HB}, later refined by Fouvry-Klüners   \cite{FK},  for computing asymptotics for certain sums of Jacobi symbols. 
 
  Having done this, we deduce the expression for $\textup{rk}_4\,\textup{Cl}(K_n)$ given in \Cref{tMain} as a consequence of a formula, due to  Greenberg and Wiles (we recall this in \Cref{sec:wiles_greenberg}), which computes the difference in dimension between a Selmer group and its dual, and which we apply to the group  $\textup{res}_{K/\mathbb{Q}}^{-1}(\textup{Sel}_{\chi_n}(G_K, \Z/2\Z))$. 
  
\begin{remark}
\textup{As a consequence of the proof \Cref{tMain} we see that, for $100\%$ of squarefree $n$, the $4$-rank of $\textup{Cl}(K_n)$ is essentially all accounted for by quadratic characters of $\mathbb{Q}$. It is a consequence of genus theory that this is also the case for the $2$-rank of the class group of imaginary quadratic fields. In fact, it is a pleasant exercise to use the sequence  \eqref{eq:cor_sequence}, along with the formula of Greenberg and Wiles, to recover the formula \eqref{eq:genus_theory} for the $2$-rank of the class group of an imaginary quadratic field. This similarity with genus theory goes some way to explaining the divergence between the distribution of $\textup{rk}_4\,\textup{Cl}(K_n)$ given in \Cref{erdos_kac_cor}, and the distributions arising in the Cohen--Lenstra heuristics.}
\end{remark}

\subsection{Layout of the paper}
In \Cref{sec:turans_trick} we record some basic analytic estimates concerning the number of prime divisors of a given rational integer satisfying certain Chebotarev conditions. 
In \Cref{sec:selmer} we prove our main algebraic results, beginning by recalling the language of Selmer structures which we express these in. 
Across  \Cref{sec:preparation of sum,sec:cuboids} we prove our main analytic result: that the groups $X_n$ and $Y_n$ defined above are trivial for $100\%$ of $n$. In \Cref{sec:generic_case} we combine the algebraic and analytic results to prove \Cref{tMain}.
 
\subsection{Conventions}
Throughout this paper we shall make use of the cohomology of profinite groups. Take a profinite group $G$ and a $G$-module $A$. We shall always endow $A$ with the discrete topology and assume that the action of $G$ on $A$ is continuous. Similarly, our cohomology groups always have to be interpreted as continuous group cohomology.

We say that an integer is squarefree if for all primes $p$ we have that $p \mid n$ implies $p^2 \nmid n$. In particular squarefree integers are allowed to be negative. We say that a property $\mathcal{P}$ is satisfied for $100\%$ of the squarefree integers $n$ if
\begin{equation}\label{eq:100percent}
\lim_{X \rightarrow \infty} \frac{|\{n \text{ squarefree} : |n| \leq X, n \text{ satisfies } \mathcal{P}\}|}{|\{n \text{ squarefree} : |n| \leq X\}|} = 1.
\end{equation}
If $L$ is a field of characteristic $0$, we write $\overline{L}$ for a choice of algebraic closure with absolute Galois group $G_L := \Gal(\overline{L}/L)$. Throughout this paper $K$ denotes a fixed number field. All implied constants may depend on this number field $K$.

We fix algebraic closures $\overline{K}$ and $\overline{K_v}$ for every place $v$ of $K$, and an embedding $i_v\colon \overline{K} \rightarrow \overline{K_v}$ for each place $v$ of $K$. To such an embedding corresponds an embedding $G_{K_v} \rightarrow G_K$ that induces a natural restriction map $H^1(G_K, \cdot) \rightarrow H^1(G_{K_v}, \cdot)$. For a nonarchimedean place $v$ of $K$ and a $G_{K_v}$-module $A$ we define the unramified classes to be
\[
H^1_{\text{ur}}(G_{K_v}, A) := \text{ker}(H^1(G_{K_v}, A) \xrightarrow{\text{res}} H^1(G_{K_v^{\text{ur}}}, A)),
\]
where $K_v^\text{ur}$ is the maximal unramified extension of $K_v$.

\subsection*{Acknowledgements}
We thank Carlo Pagano and Ross Paterson for various insightful discussions. The authors wish to thank the Max Planck Institute for Mathematics in Bonn for its financial support, great work conditions and an inspiring atmosphere.

\section{Tur\'an's trick} \label{sec:turans_trick}
We recall the following version of Mertens' theorem.

\begin{theorem}
\label{tMertens}
Let $K$ be a number field and let $L/K$ be an abelian extension. Fix $\sigma \in \textup{Gal}(L/K)$. Then we have
\[
\sum_{\substack{N_{K/\Q}(\mathfrak{p}) \leq X \\ \textup{Art}_{L/K}(\mathfrak{p}) = \sigma}} \frac{1}{N_{K/\Q}(\mathfrak{p})} = \frac{\log \log X}{[L: K]} + O(1),
\]
where $\mathfrak{p}$ is to be omitted from the sum in case $\mathfrak{p}$ ramifies in $L/K$.
\end{theorem}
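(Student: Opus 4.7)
My plan is to reduce the theorem to the analytic properties of Hecke $L$-functions via character orthogonality on $G := \textup{Gal}(L/K)$. Writing $\widehat{G}$ for the character group, for any prime $\mathfrak{p}$ unramified in $L/K$ orthogonality gives
\[
\mathbf{1}_{\{\textup{Art}_{L/K}(\mathfrak{p}) = \sigma\}} = \frac{1}{[L:K]}\sum_{\chi \in \widehat{G}} \overline{\chi(\sigma)}\,\chi(\textup{Art}_{L/K}(\mathfrak{p})),
\]
which reduces the statement, up to an $O(1)$ contribution from the finitely many ramified primes, to evaluating
\[
S_\chi(X) := \sum_{\substack{N_{K/\Q}(\mathfrak{p}) \leq X \\ \mathfrak{p}\text{ unramified in }L}} \frac{\chi(\textup{Art}_{L/K}(\mathfrak{p}))}{N_{K/\Q}(\mathfrak{p})}
\]
for each $\chi \in \widehat{G}$.

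For the trivial character $\chi = 1$, this is the Mertens-type estimate for a number field, $\sum_{N_{K/\Q}(\mathfrak{p}) \leq X} 1/N_{K/\Q}(\mathfrak{p}) = \log\log X + O(1)$. I would derive it from the simple pole of $\zeta_K(s)$ at $s=1$: the Euler product gives $\log \zeta_K(s) = \sum_{\mathfrak{p}} N_{K/\Q}(\mathfrak{p})^{-s} + O(1)$ as $s \to 1^+$ (the $O(1)$ absorbing the contribution from prime powers of degree $\geq 2$), and comparing with $\log \zeta_K(s) = -\log(s-1) + O(1)$, a short Abel summation argument yields the asymptotic.

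For each nontrivial $\chi \in \widehat{G}$, the composition $\chi \circ \textup{Art}_{L/K}$ is a nontrivial Hecke character of $K$, whose Hecke $L$-function $L(s,\chi)$ is entire and non-vanishing at $s=1$. Non-vanishing follows from the factorisation $\zeta_L(s) = \zeta_K(s) \prod_{\chi \neq 1} L(s,\chi)$ combined with the fact that $\zeta_L(s)$ has only a simple pole at $s = 1$. Logarithmic expansion of the Euler product then shows $\sum_{\mathfrak{p}} \chi(\mathfrak{p})/N_{K/\Q}(\mathfrak{p})^s$ extends continuously to $s = 1$, and a Tauberian argument (e.g.\ Wiener--Ikehara applied to $\log L(s,\chi)$, or Abel summation from the prime ideal form of Chebotarev's density theorem) gives $S_\chi(X) = O(1)$. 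Summing the weighted contributions $\overline{\chi(\sigma)}S_\chi(X)/[L:K]$ over $\widehat{G}$ produces the main term $\log\log X/[L:K]$ from $\chi = 1$ together with an $O(1)$ error from the nontrivial characters.

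The main obstacle is the last step: converting the boundedness of the Dirichlet series $\sum_\mathfrak{p} \chi(\mathfrak{p})/N_{K/\Q}(\mathfrak{p})^s$ at $s = 1$ into the partial-sum bound $S_\chi(X) = O(1)$. The other ingredients — character orthogonality, Mertens for $\zeta_K$, and the non-vanishing $L(1,\chi) \neq 0$ — are classical, but the passage from Dirichlet-series behaviour at $s=1$ to an estimate on truncated sums requires either a genuine Tauberian input or the full asymptotic form of Chebotarev's theorem. Once this is in hand, reassembling the characters gives the formula with the explicit $1/[L:K]$ density.
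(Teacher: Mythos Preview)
Your approach is correct, but it takes a more circuitous route than the paper. The paper's proof is one line: ``This follows from the Chebotarev density theorem and partial summation.'' That is, one simply quotes the asymptotic $\pi(X;L/K,\sigma) = \frac{1}{[L:K]}\,\textup{Li}(X) + o(X/\log X)$ and applies Abel summation to the counting function directly, obtaining $\log\log X/[L:K] + O(1)$ in a single stroke.

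Your decomposition via character orthogonality is valid and is essentially a re-derivation of the abelian case of Chebotarev from the analytic properties of Hecke $L$-functions. The ingredients you name (Mertens for $\zeta_K$, the factorisation $\zeta_L = \zeta_K\prod_{\chi\neq 1} L(s,\chi)$, non-vanishing of $L(1,\chi)$) are exactly what goes into proving Chebotarev in this setting, and the ``main obstacle'' you identify --- passing from boundedness of $\sum_\mathfrak{p}\chi(\mathfrak{p})/N\mathfrak{p}^s$ at $s=1$ to $S_\chi(X)=O(1)$ --- is precisely where a prime-number-theorem-strength input (equivalently, Chebotarev in its asymptotic form) is needed. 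So your argument, once completed, collapses back to the paper's: you end up invoking Chebotarev (or its equivalent) inside each nontrivial-character term. The advantage of your route is that it makes explicit why the density is $1/[L:K]$ and isolates where the analytic input enters; the advantage of the paper's route is brevity, treating Chebotarev as a black box rather than reopening it.
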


\begin{proof}
This follows from the Chebotarev density theorem and partial summation.
\end{proof}

Let $L/K$ be an abelian extension and fix $H \subseteq \Gal(L/K)$. For a rational non-zero integer $n$, we define $\omega_H(n)$ to be the number of prime divisors $\mathfrak{p}$ of $K$ dividing $n$ such that $\mathfrak{p}$ has degree $1$ and $\text{Art}_{L/K}(\mathfrak{p})$ lies in $H$ (in particular, $\mathfrak{p}$ ought not ramify in $L/K$). The following result is an immediate consequence of Theorem \ref{tMertens}.

\begin{theorem}
\label{tTuran}
Let $K$ be a number field and let $L/K$ be an abelian extension. Fix $H \subseteq \Gal(L/K)$. Then
\[
\frac{1}{2X} \sum_{\substack{1 \leq |n| \leq X}} \omega_H(n) = \log \log X \frac{|H|}{[L: K]} + O(1).
\]
and
\[
\frac{1}{2X} \sum_{\substack{1 \leq |n| \leq X}} \omega_H(n)^2 = (\log \log X)^2 \frac{|H|^2}{[L: K]^2}+ O(\log \log X).
\]
\end{theorem}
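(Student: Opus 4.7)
The plan is to prove both statements by interchanging the order of summation, reducing to sums of reciprocals of norms of prime ideals, and then invoking Theorem \ref{tMertens}. Throughout we write $a_p$ for the number of degree-one primes $\mathfrak{p}$ of $K$ lying above a rational prime $p$ such that $\mathrm{Art}_{L/K}(\mathfrak{p}) \in H$, noting $a_p \leq [K:\Q]$; we have $\omega_H(n) = \sum_{p\mid n} a_p$ and only rational primes unramified in $L/K$ contribute. Since $\mathfrak{p}$ has residue degree one, $\mathfrak{p} \mid n$ in $\OO_K$ is equivalent to $p \mid n$ in $\Z$.

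For the first moment, I would write $\omega_H(n) = \sum_{\mathfrak{p}} \mathbf{1}[\mathfrak{p} \mid n]$, where $\mathfrak{p}$ ranges over degree-one primes with Artin symbol in $H$, and swap the order of summation:
\[
\sum_{1 \leq |n| \leq X} \omega_H(n) = \sum_{\mathfrak{p}} \#\{n : 1 \leq |n| \leq X, \, p \mid n\} = \sum_{\mathfrak{p},\, N\mathfrak{p} \leq X} 2 \lfloor X/N\mathfrak{p} \rfloor.
\]
Replacing each $\lfloor X/N\mathfrak{p}\rfloor$ by $X/N\mathfrak{p} + O(1)$, the error contributes $O(\pi_K(X)) = O(X/\log X)$, while the main term is $2X \sum_{\mathfrak{p}, N\mathfrak{p}\leq X} 1/N\mathfrak{p}$. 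Applying Theorem \ref{tMertens} once for each $\sigma \in H$ then gives the first claimed asymptotic.

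For the second moment, I would similarly expand
\[
\sum_{1 \leq |n| \leq X} \omega_H(n)^2 = \sum_{\mathfrak{p}_1,\mathfrak{p}_2} \#\{n : 1 \leq |n| \leq X,\, \mathfrak{p}_1 \mid n,\, \mathfrak{p}_2 \mid n\},
\]
and split the pair sum according to whether the underlying rational primes $p_1,p_2$ coincide. When $p_1 = p_2$ the joint divisibility condition collapses to $p_1 \mid n$, so the number of such pairs is at most $[K:\Q]^2$ per rational prime, and the total contribution is bounded by $2[K:\Q]^2 X \sum_{p\leq X} 1/p = O(X \log\log X)$, which is absorbed in the claimed error. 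When $p_1 \neq p_2$ the count is $2\lfloor X/(p_1 p_2)\rfloor$; replacing floors by fractions and using that the number of pairs with $p_1p_2 \leq X$ is $o(X)$ controls the resulting error. The main term becomes
\[
2X \sum_{\substack{\mathfrak{p}_1,\mathfrak{p}_2 \\ p_1 \neq p_2,\, N\mathfrak{p}_i \leq X}} \frac{1}{N\mathfrak{p}_1 N\mathfrak{p}_2} = 2X\left(\sum_{\mathfrak{p},\, N\mathfrak{p}\leq X} \frac{1}{N\mathfrak{p}}\right)^2 - 2X \sum_{p_1 = p_2} \frac{a_{p_1}^2}{p_1^2},
\]
the second term being $O(X)$. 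Squaring the asymptotic from Theorem \ref{tMertens} yields $(\log\log X)^2 |H|^2/[L:K]^2 + O(\log\log X)$, and dividing by $2X$ completes the proof.

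The only mildly delicate point is the bookkeeping of error terms in the second moment, specifically verifying that the diagonal contribution from pairs with $p_1 = p_2$ as well as the boundary effect from the constraint $p_1 p_2 \leq X$ (versus just $p_1,p_2 \leq X$) are both $O(\log\log X)$ after division by $2X$; this is standard and amounts to Mertens-type estimates on short sums of $1/p$, so I don't expect any real obstacle.
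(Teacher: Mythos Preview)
Your proof is correct and follows essentially the same approach as the paper: swap the order of summation, count the integers $n$ with $1\leq |n|\leq X$ divisible by a given degree-one prime, and reduce to Theorem \ref{tMertens}. The paper's argument is more terse (the second moment is dismissed with ``proven similarly''), while you spell out the diagonal/off-diagonal decomposition and the boundary bookkeeping explicitly, but the underlying method is identical.
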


\begin{proof}
For the first part, note that
\[
\sum_{\substack{1 \leq |n| \leq X}} \omega_H(n) = \sum_{\sigma \in H} \sum_{\substack{N_{K/\Q}(\mathfrak{p}) \leq X \\ \textup{Art}_{L/K}(\mathfrak{p}) = \sigma \\ \textup{deg}(\mathfrak{p}) = 1}} \sum_{\substack{1 \leq |n| \leq X \\ n \equiv 0 \bmod \mathfrak{p}}} 1.
\]
If $\mathfrak{p}$ has degree $1$, then $n \equiv 0 \bmod \mathfrak{p}$ happens for
\[
\frac{2X}{N_{K/\Q}(\mathfrak{p})} + O(1)
\]
rational integers $1 \leq |n| \leq X$. Since primes of degree $1$ form the main contribution in Theorem \ref{tMertens}, we get the first part of Theorem \ref{tTuran}. The second part is proven similarly.
\end{proof}

It is the following corollary of Theorem \ref{tTuran} that we shall use. 

\begin{corollary}
\label{cTuran}
Let $L/K$ be an abelian extension and let $A > 0$ be a real number. Then $100\%$ of the squarefree integers $n$ are such that for every $\sigma \in \Gal(L/K)$ there exist at least $A$ different primes $\mathfrak{p}$ of $K$ dividing $n$ with $\textup{Art}_{L/K}(\mathfrak{p}) = \sigma$.
\end{corollary}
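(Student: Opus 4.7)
The plan is to derive the corollary from the two moment estimates in \Cref{tTuran} via the second moment method (this is precisely Tur\'an's trick), handling one coset element at a time and then taking a union bound.

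Concretely, fix $\sigma \in \Gal(L/K)$ and apply \Cref{tTuran} with the singleton set $H=\{\sigma\}$. Writing $\omega_\sigma(n)$ for $\omega_{\{\sigma\}}(n)$, this gives
\[
\frac{1}{2X}\sum_{1\leq |n|\leq X}\omega_\sigma(n) = \frac{\log\log X}{[L:K]}+O(1),\qquad \frac{1}{2X}\sum_{1\leq |n|\leq X}\omega_\sigma(n)^2 = \frac{(\log\log X)^2}{[L:K]^2}+O(\log\log X).
\]
Expanding $(\omega_\sigma(n)-\tfrac{\log\log X}{[L:K]})^2$ and summing, the main terms cancel and one obtains
\[
\sum_{1\leq |n|\leq X}\Bigl(\omega_\sigma(n)-\tfrac{\log\log X}{[L:K]}\Bigr)^2 \,=\, O(X\log\log X).
\]
By Chebyshev's inequality, the number of integers $n$ with $|n|\leq X$ for which $\omega_\sigma(n)<A$ is at most
\[
\frac{O(X\log\log X)}{\bigl(\tfrac{\log\log X}{[L:K]}-A\bigr)^{2}} \;=\; O\!\left(\frac{X}{\log\log X}\right),
\]
once $X$ is large enough that $\tfrac{\log\log X}{[L:K]}>2A$, say. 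This is $o(X)$.

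Taking a union bound over the finitely many elements $\sigma\in\Gal(L/K)$ shows that the number of integers $n$ with $|n|\leq X$ failing the conclusion of the corollary for \emph{some} $\sigma$ is $o(X)$. Since the squarefree integers in $[-X,X]$ have cardinality $\asymp X$, dividing by this quantity gives the desired statement in the sense of \eqref{eq:100percent}. There is no real obstacle here; the only point to be slightly careful about is ensuring the mean $\tfrac{\log\log X}{[L:K]}$ genuinely dominates the threshold $A$ (which it does for $X$ large since $A$ is fixed), so that the Chebyshev denominator is of the expected size.
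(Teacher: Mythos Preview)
Your proof is correct and follows essentially the same route as the paper: derive the variance bound $\frac{1}{2X}\sum_{|n|\leq X}(\omega_H(n)-\mu)^2=O(\log\log X)$ from \Cref{tTuran} and conclude via Chebyshev. The paper compresses the Chebyshev step and the union bound over $\sigma$ into the phrase ``which immediately yields the corollary'', whereas you spell these out; your centering at $\mu=\tfrac{\log\log X}{[L:K]}$ is in fact the correct one.
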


\begin{proof}
Theorem \ref{tTuran} implies that
\[
\frac{1}{2X} \sum_{\substack{1 \leq |n| \leq X}} (\omega_H(n) - \log \log X)^2 = O(\log \log X),
\]
which immediately yields the corollary.
\end{proof}

\section{Selmer groups} \label{sec:selmer}
Take a number field $K$ and a discrete $G_K$-module $A$. A Selmer structure is a collection $\{\mathcal{L}_v\}_v$, where $\mathcal{L}_v$ is a subset of $H^1(G_{K_v}, A)$ for each place $v$ such that
\[
\mathcal{L}_v = H^1_{\text{ur}}(G_{K_v}, A)
\]
for all but finitely many places $v$. To a Selmer structure we can associate a Selmer group $\text{Sel}(G_K, A, \{\mathcal{L}_v\}_v) \subseteq H^1(G_K, A)$ defined by the exactness of 
\[
0 \rightarrow \text{Sel}(G_K, A, \{\mathcal{L}_v\}_v) \rightarrow H^1(G_K, A) \rightarrow \bigoplus_v H^1(G_{K_v}, A)/\mathcal{L}_v.
\]
It follows from finiteness of the class group and Dirichlet's unit theorem that $\text{Sel}(G_K, A, \{\mathcal{L}_v\}_v)$ is a finite abelian group provided that $A$ itself is finite. When the local conditions are clear, we shall drop the $\{\mathcal{L}_v\}_v$ from the notation. Write $\text{Cl}(K)$ for the class group of a number field $K$ and $\text{Cl}^\vee(K)$ for the dual class group.

\begin{lemma}
\label{lClass}
Let $K$ be a number field. Let $m \geq 1$ be an integer and equip $\Z/m\Z$ with the trivial $G_K$-action. Take $\mathcal{L}_v = H^1_{\textup{ur}}(G_{K_v}, \Z/m\Z)$ for all finite places $v$, and $\mathcal{L}_v = 0$ for the archimedean places $v$. Then
\[
\textup{Sel}(G_K, \Z/m\Z, \{\mathcal{L}_v\}_v) \cong \textup{Cl}^\vee(K)[m].
\]
\end{lemma}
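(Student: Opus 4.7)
The plan is to translate the Selmer group into characters of the id\`ele class group $C_K$ via global class field theory, and then identify the resulting quotient with $\textup{Cl}(K)$. Since $\mathbb{Z}/m\mathbb{Z}$ carries the trivial $G_K$-action, we have $H^1(G_K, \mathbb{Z}/m\mathbb{Z}) = \textup{Hom}_{\textup{cts}}(G_K^{\textup{ab}}, \mathbb{Z}/m\mathbb{Z})$, and global Artin reciprocity identifies this with $\textup{Hom}_{\textup{cts}}(C_K, \mathbb{Z}/m\mathbb{Z})$. The analogous identifications at each place, via the local Artin maps, are compatible with the restriction maps $H^1(G_K, \cdot) \to H^1(G_{K_v}, \cdot)$, so the problem reduces to describing the continuous $\mathbb{Z}/m\mathbb{Z}$-valued characters of $C_K$ satisfying the translated local conditions.

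Next I would translate the local conditions place by place. At a finite place $v$, local class field theory identifies the inertia subgroup of $G_{K_v}^{\textup{ab}}$ with $\mathcal{O}_{K_v}^\times \subseteq K_v^\times$, so a character lies in $H^1_{\textup{ur}}(G_{K_v}, \mathbb{Z}/m\mathbb{Z})$ iff the corresponding character of $K_v^\times$ is trivial on $\mathcal{O}_{K_v}^\times$. At an archimedean place $v$, the condition $\mathcal{L}_v = 0$ says that $\chi$ vanishes on all of $G_{K_v}$, which via the surjectivity of the local Artin map onto $G_{K_v}^{\textup{ab}}$ is equivalent to the corresponding character being trivial on all of $K_v^\times$. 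Collecting these conditions, the Selmer group is identified with $\textup{Hom}_{\textup{cts}}(C_K/\overline{U}, \mathbb{Z}/m\mathbb{Z})$, where $\overline{U}$ is the image in $C_K$ of
\[
U := \prod_{v \textup{ finite}} \mathcal{O}_{K_v}^\times \cdot \prod_{v \textup{ arch}} K_v^\times \subseteq I_K.
\]

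The final step is to identify $C_K/\overline{U}$ with $\textup{Cl}(K)$. Projecting an id\`ele to its associated fractional ideal identifies $I_K/U$ with the free abelian group $\bigoplus_{v \textup{ finite}} \mathbb{Z}$ on the finite places of $K$, and the image of $K^\times$ under this projection is precisely the subgroup of principal fractional ideals. Hence $C_K/\overline{U} = I_K/(K^\times U) = \textup{Cl}(K)$, and the Selmer group becomes $\textup{Hom}(\textup{Cl}(K), \mathbb{Z}/m\mathbb{Z}) = \textup{Cl}^\vee(K)[m]$, as claimed.

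The main (modest) subtlety, and the thing I would check carefully, is the archimedean condition: one must verify that $\mathcal{L}_v = 0$ corresponds, after Artin, to triviality on \emph{all} of $K_v^\times$ (so that we recover the ordinary class group), rather than on the connected component $(K_v^\times)_+$ at real places (which would produce the narrow class group $\textup{Cl}^+(K)$ instead). This is precisely why the statement takes $\mathcal{L}_v = 0$ at archimedean places.
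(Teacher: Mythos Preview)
Your proof is correct. The paper takes a slightly more direct route: rather than passing through the id\`ele class group $C_K$, it observes directly that the Selmer conditions are exactly those forcing a character $\chi \in H^1(G_K,\mathbb{Z}/m\mathbb{Z})$ to factor through $\Gal(H_K/K)$, where $H_K$ is the Hilbert class field of $K$ (the maximal abelian extension of $K$ unramified at all places, including archimedean ones), and then invokes the canonical isomorphism $\Gal(H_K/K)\cong\textup{Cl}(K)$. Your id\`elic argument unpacks precisely the same class field theory in a more explicit form; the two proofs are equivalent in content, with the paper's being marginally shorter and yours making the archimedean subtlety (ordinary versus narrow class group) more transparent.
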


\begin{proof}
Denote by $H_K$ the Hilbert class field of $K$. Class field theory yields a canonical isomorphism
\[
\textup{Cl}^\vee(K)[m] \cong H^1(\Gal(H_K/K), \Z/m\Z).
\]
Inflation gives an injective map $H^1(\Gal(H_K/K), \Z/m\Z) \rightarrow \textup{Sel}(G_K, \Z/m\Z, \{\mathcal{L}_v\}_v)$. But the local conditions force that any character $\chi \in \textup{Sel}(G_K, \Z/m\Z, \{\mathcal{L}_v\}_v)$ factors through $\Gal(H_K/K)$. This proves the lemma.
\end{proof}

\subsection{\texorpdfstring{The $4$-rank as a Selmer group}{The 4-rank as a Selmer group}}
For now we take $\mathcal{L}_v$ as in Lemma \ref{lClass} and we take $n$ to be a rational squarefree integer. Our aim is to describe the image of the corestriction map 
\[
\text{Sel}(G_{K(\sqrt{n})}, \Z/m\Z) \rightarrow \text{Sel}(G_K, \Z/m\Z)
\]
for $m \in \{2, 4\}$. Recall that the corestriction map on characters is explicitly given as follows. Given a character $\chi \in H^1(G_{K(\sqrt{n})}, \Z/m\Z)$, we define $\text{cores}(\chi)$ to be the character of $G_K$ that sends $\sigma$ to
\[
\left\{
\begin{array}{ll}
\chi(\sigma) + \chi(\tau^{-1} \sigma \tau) & \mbox{if } \sigma \in G_{K(\sqrt{n})} \\
\chi(\sigma^2) & \mbox{if } \sigma \not \in G_{K(\sqrt{n})},
\end{array}
\right.
\]
where $\tau$ is any lift of the non-trivial element of $\Gal(K(\sqrt{n})/K)$ to $G_K$. It is easy to see that this induces a map
\[
\text{Sel}(G_{K(\sqrt{n})}, \Z/m\Z) \rightarrow \text{Sel}(G_K, \Z/m\Z),
\]
which we will also call corestriction.

\begin{lemma}
\label{lCores0}
The image of the corestriction map
\[
\textup{Sel}(G_{K(\sqrt{n})}, \Z/2\Z) \rightarrow \textup{Sel}(G_K, \Z/2\Z)
\]
is zero for $100\%$ of squarefree integers $n$.
\end{lemma}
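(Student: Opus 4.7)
The plan is to identify the corestriction map with a norm map via Kummer theory, and then apply Turán's trick (\Cref{cTuran}) to rule out nontrivial $\phi \in \textup{Sel}(G_K, \Z/2\Z)$ being norms for generic $n$. Via the Kummer isomorphism $H^1(G_K, \Z/2\Z) \cong K^{\ast}/K^{\ast 2}$ (and likewise for $K_n = K(\sqrt n)$), the corestriction corresponds to the norm map $N_{K_n/K} \colon K_n^{\ast}/K_n^{\ast 2} \to K^{\ast}/K^{\ast 2}$. Since $K^{\ast 2} \subseteq N_{K_n/K}(K_n^{\ast})$ (as $\gamma^2 = N_{K_n/K}(\gamma)$ for $\gamma \in K^{\ast}$), it suffices to show that for $100\%$ of squarefree $n$, no nontrivial $\phi \in \textup{Sel}(G_K, \Z/2\Z)$ lies in $N_{K_n/K}(K_n^{\ast})$.

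Fix such a nontrivial $\phi$, so that $L_\phi = K(\sqrt{\phi})/K$ is an unramified quadratic extension. By Hasse's norm theorem applied to the cyclic extension $K_n/K$, one has $\phi \in N_{K_n/K}(K_n^{\ast})$ if and only if the local Hilbert symbol $(\phi, n)_v$ equals $1$ at every place $v$ of $K$. For a prime $\mathfrak{p}$ of $K$ of odd residue characteristic with $v_\mathfrak{p}(\phi) = 0$ and $v_\mathfrak{p}(n) = 1$, the tame formula for the Hilbert symbol gives $(\phi, n)_\mathfrak{p}$ equal to the Legendre symbol of the reduction $\bar{\phi}$ modulo $\mathfrak{p}$, which equals $1$ precisely when $\mathfrak{p}$ splits in $L_\phi$, i.e.\ $\textup{Art}_{L_\phi/K}(\mathfrak{p}) = 1$.

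Applying \Cref{cTuran} to $L_\phi/K$ with $\sigma$ the nontrivial element of $\textup{Gal}(L_\phi/K)$, one sees that for $100\%$ of squarefree $n$ there exist arbitrarily many degree-one primes $\mathfrak{p}$ of $K$ dividing $n$ with $\textup{Art}_{L_\phi/K}(\mathfrak{p}) = \sigma$. Discarding a finite set of exceptional primes (those of residue characteristic $2$, and those where $\phi$ has positive valuation; note that no primes ramify in $L_\phi/K$ since $L_\phi/K$ is unramified everywhere), at least one such $\mathfrak{p}$ satisfies the hypotheses of the tame formula above, yielding $(\phi, n)_\mathfrak{p} = -1$ and hence $\phi \notin N_{K_n/K}(K_n^{\ast})$. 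Intersecting the resulting full-density sets across the finitely many nontrivial elements of $\textup{Sel}(G_K, \Z/2\Z) \cong \Cl^{\vee}(K)[2]$ gives the lemma. The main technical content lies in identifying corestriction with the norm and executing the tame Hilbert symbol computation; the analytic work is entirely delegated to \Cref{cTuran}.
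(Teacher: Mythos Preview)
Your proof is correct and follows essentially the same route as the paper's. The paper identifies corestriction with the norm via Kummer theory, rephrases $\alpha \in N_{K(\sqrt{n})/K}(K(\sqrt{n})^{\ast})$ as nontrivial solubility of the conic $x^2 - ny^2 = \alpha z^2$, observes that solubility forces every odd prime $\mathfrak{p}$ ramified in $K(\sqrt{n})/K$ to split in $K(\sqrt{\alpha})$, and then applies \Cref{cTuran} exactly as you do; your Hilbert-symbol computation is the same local obstruction in different language. One small remark: you cite Hasse's norm theorem, but you only use the trivial direction (global norm $\Rightarrow$ local norm everywhere), so the appeal to Hasse is unnecessary---finding a single $\mathfrak{p}$ with $(\phi,n)_\mathfrak{p}=-1$ already blocks $\phi$ from being a global norm.
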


\begin{proof}
Let for now $L$ be any field of characteristic different from $2$. From Kummer theory we get an isomorphism
\[
H^1(G_L, \Z/2\Z) \cong L^\ast/L^{\ast 2},
\]
which is given by sending $\alpha \in L^\ast$ to the map
\[
\sigma \mapsto \frac{\sigma(\beta)}{\beta}
\]
where $\beta$ is any element of $\overline{L}$ satisfying $\beta^2 = \alpha$. We then have a commutative diagram
\[ 
\begin{tikzcd}
K(\sqrt{n})^\ast/K(\sqrt{n})^{\ast 2} \arrow{d}{N_{K(\sqrt{n})/K}} \arrow{r}{\cong}  & H^1(G_{K(\sqrt{n})}, \Z/2\Z) \arrow[swap]{d}{\text{cores}} \\%
K^\ast/K^{\ast 2} \arrow{r}{\cong} & H^1(G_K, \Z/2\Z),
\end{tikzcd}
\]
where $\cong$ is the Kummer isomorphism and $N_{K(\sqrt{n})/K}$ is the norm. Fix some character $\chi \in \text{Sel}(G_K, \Z/2\Z)$, which we may identify with an element $\alpha \in K^\ast/K^{\ast 2}$. Now take a character $\psi \in\text{Sel}(G_{K(\sqrt{n})}, \Z/2\Z)$, which we view as an element $\beta \in K(\sqrt{n})^\ast/K(\sqrt{n})^{\ast 2}$. The commutative diagram shows that
\[
\text{cores}(\psi) = \chi \Longleftrightarrow N_{K(\sqrt{n})/K}(\beta) = \alpha.
\]
Writing $\beta := x + y\sqrt{n}$, we view the equation $N_{K(\sqrt{n})/K}(\beta) = \alpha$ as a conic
\begin{align}
\label{eConic}
x^2 - ny^2 = \alpha z^2
\end{align}
to be solved non-trivially in $x, y, z \in K$. The solubility of equation (\ref{eConic}) implies that an odd prime $\mathfrak{p}$ of $K$ that ramifies in $K(\sqrt{n})$ must split in $K(\sqrt{\alpha})$. For a given non-trivial $\alpha$ it follows from Corollary \ref{cTuran} that this happens $0\%$ of the time. Since there are only finitely many choices for $\alpha$, the lemma follows.
\end{proof}

\begin{lemma}
\label{lCores2}
Let $K$ be a number field and let $n$ be a squarefree integer. Suppose that the image of the corestriction map
\begin{align}
\label{eCores0}
\textup{Sel}(G_{K(\sqrt{n})}, \Z/2\Z) \rightarrow \textup{Sel}(G_K, \Z/2\Z)
\end{align}
is zero. Further suppose that the odd ramified primes in $K(\sqrt{n})/K$ represent every class of the ray class group $\textup{Cl}(K, 8 \infty)$ of $K$ of conductor $8 \infty$. Then
\[
\textup{im}\left(\textup{Sel}(G_{K(\sqrt{n})}, \Z/4\Z) \xrightarrow{\textup{cores}} \textup{Sel}(G_K, \Z/4\Z)\right) = \textup{Sel}(G_K, \Z/2\Z).
\]
\end{lemma}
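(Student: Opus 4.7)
The plan is to prove the non-trivial inclusion $\supseteq$ via the identity $\textup{cores}\circ\textup{res}=[F:K]=2$, where $F=K(\sqrt n)$. Given $\chi \in \textup{Sel}(G_K,\Z/2\Z)$, viewed inside $\textup{Sel}(G_K,\Z/4\Z)$ via $\Z/2\Z\xrightarrow{\cdot 2}\Z/4\Z$, it is enough to find a lift $\tilde\chi \in H^1(G_K,\Z/4\Z)$ of $\chi$ whose restriction $\psi := \textup{res}_{F/K}(\tilde\chi) \in H^1(G_F,\Z/4\Z)$ lies in $\textup{Sel}(G_F,\Z/4\Z)$: for then $\textup{cores}(\psi)=2\tilde\chi$ equals $\chi$ under our embedding. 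The opposite inclusion is immediate from hypothesis~(a), since reducing mod $2$ commutes with corestriction, so the image of $\textup{cores}$ is forced into the $2$-torsion subgroup of $\textup{Sel}(G_K,\Z/4\Z)$.

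First I would show that some such global lift $\tilde\chi$ exists in $H^1(G_K,\Z/4\Z)$, ignoring the Selmer condition over $F$. The obstruction is the Bockstein class $\beta(\chi) \in H^2(G_K,\Z/2\Z) = \textup{Br}(K)[2]$. At each finite $\mathfrak{p}$ the Selmer condition makes $\chi_\mathfrak{p}$ unramified, so it factors through $G_{K_\mathfrak{p}}/I_\mathfrak{p}\cong\widehat{\Z}$, which has cohomological dimension $1$; this forces $\beta(\chi_\mathfrak{p})=0$. At archimedean places $\chi_v=0$ by the Selmer condition, so $\beta(\chi_v)=0$ also. The Brauer--Hasse--Noether sequence for $\textup{Br}(K)[2]$ then yields $\beta(\chi)=0$ globally, producing $\tilde\chi$.

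Next I would study at which places $\psi=\textup{res}(\tilde\chi)$ can fail the Selmer condition, and correct by replacing $\tilde\chi$ with $\tilde\chi+\eta$ for some $\eta \in H^1(G_K,\Z/2\Z)=K^\ast/K^{\ast 2}$, embedded into $H^1(G_K,\Z/4\Z)$ via $\cdot 2$ (such $\eta$ is precisely the ambiguity in the lift, and is $2$-torsion, so $\textup{cores}(\textup{res}(\tilde\chi+\eta))=2(\tilde\chi+\eta)=2\tilde\chi=\chi$). Because $\chi$ is unramified, the tame ramification class of $\tilde\chi$ at any prime $\mathfrak{p}\nmid 2$ lies in $2(\Z/4\Z) \subseteq H^1(G_{K_\mathfrak{p}},\Z/4\Z)/H^1_{\textup{ur}}\cong\Z/4\Z$, while the natural restriction $H^1/H^1_{\textup{ur}}\to H^1/H^1_{\textup{ur}}$ from $K_\mathfrak{p}$ to $F_v$ is multiplication by the ramification index $e(v/\mathfrak{p})\in\{1,2\}$. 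In particular, when $\mathfrak{p}\nmid 2$ is ramified in $F/K$ the ramification of $\psi$ at $v$ is automatically killed, and no correction is needed. The places actually requiring a correction are therefore the tame primes $\mathfrak{p}$ unramified in $F/K$ at which $\tilde\chi$ happens to be ramified, together with the places above $2$ and the archimedean places; at each such $\mathfrak{p}$ the admissible $\eta_\mathfrak{p}$ form a coset of an explicit subgroup $L_\mathfrak{p}\subseteq H^1(G_{K_\mathfrak{p}},\Z/2\Z)$, with $L_\mathfrak{p}$ at the places above $2$ and at real archimedean places calibrated so that its Tate-dual encodes exactly the local condition defining $\textup{Cl}(K,8\infty)$.

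Producing a global $\eta$ satisfying these local constraints reduces, via the Poitou--Tate exact sequence, to the vanishing of the corresponding dual Selmer group: characters $\phi \in H^1(G_K,\Z/2\Z)$ that are unramified at all odd primes unramified in $F/K$, trivial locally at every odd prime ramified in $F/K$, and of conductor dividing $8\infty$ at the remaining places. By class field theory such a $\phi$ is exactly a character of $\textup{Cl}(K,8\infty)$ vanishing on the classes of the odd primes ramified in $F/K$; hypothesis~(b) asserts precisely that these classes generate $\textup{Cl}(K,8\infty)$, which forces $\phi=0$. Taking $\psi=\textup{res}(\tilde\chi+\eta)$ then gives the required preimage of $\chi$. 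The main technical obstacle I anticipate is the bookkeeping at the places above $2$ and at infinity: verifying that the local conditions at these places are exactly dual to the defining condition for $\textup{Cl}(K,8\infty)$ is the crux, and after this identification the invocation of hypothesis~(b) is clean.
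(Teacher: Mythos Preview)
Your proposal is correct and follows the same overall strategy as the paper: both prove the hard inclusion by lifting $\chi$ to some $\tilde\chi \in H^1(G_K,\Z/4\Z)$ whose restriction to $F=K(\sqrt n)$ lies in the Selmer group, and then appeal to $\textup{cores}\circ\textup{res}=2$. Your Bockstein argument is exactly the paper's observation that $\chi_\alpha\cup\chi_\alpha=0$ locally (hence globally), and your twisting by $\eta\in H^1(G_K,\Z/2\Z)$ is the paper's passage from the cyclic degree-$4$ extension $L$ to $L_{tt'}$.

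The difference is in how the correcting twist is produced. The paper does it by hand in two steps: first weak approximation gives a twist $t$ making $\tilde\chi$ unramified at all places above $2\infty$; then the ray class group hypothesis is invoked directly to find a further $t'$ with $t'\equiv 1\bmod 8$, $t'$ totally positive, and $(t')=\mathfrak{p}_1\cdots\mathfrak{p}_\ell\,\mathfrak{q}$, which kills the residual odd ramification outside the ramification locus of $F/K$ while preserving the good behaviour at $2\infty$. This makes the appearance of the modulus $8\infty$ completely explicit and uses no duality machinery. Your single-shot Poitou--Tate reduction is more conceptual; the bookkeeping at $2\infty$ you flag as the crux is in fact milder than you fear, since by Hensel every $\Z/2\Z$-valued character of a completion $K_\mathfrak{p}$ with $\mathfrak{p}\mid 2$ already has conductor dividing $\mathfrak{p}^{2e_\mathfrak{p}+1}\mid 8$, so whatever $L_\mathfrak{p}$ turns out to be, the dual Selmer group embeds into $\textup{Hom}(\textup{Cl}(K,8\infty),\Z/2\Z)$ and is then killed by the hypothesis on the odd ramified primes. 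Thus the paper trades abstraction for explicitness, while your route trades a clean duality statement for the need to set up the Poitou--Tate step carefully.
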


\begin{remark}
The first condition is satisfied for $100\%$ of squarefree integers $n$ by Lemma \ref{lCores0}. An application of Corollary \ref{cTuran} shows that the second condition is also satisfied for $100\%$ of the squarefree integers $n$.
\end{remark}

\begin{proof}
We have a commutative diagram	
\[ 
\begin{tikzcd}
\textup{Sel}(G_{K(\sqrt{n})}, \Z/4\Z) \arrow{r}{\text{cores}} \arrow[swap]{d}{\cdot 2} & \textup{Sel}(G_K, \Z/4\Z) \arrow{d}{\cdot 2} \\%
\textup{Sel}(G_{K(\sqrt{n})}, \Z/2\Z) \arrow{r}{\text{cores}}& \textup{Sel}(G_K, \Z/2\Z).
\end{tikzcd}
\]
It follows from the diagram and equation (\ref{eCores0}) that 
\[
\textup{im}\left(\textup{Sel}(G_{K(\sqrt{n})}, \Z/4\Z) \xrightarrow{\textup{cores}} \textup{Sel}(G_K, \Z/4\Z)\right) \subseteq \textup{Sel}(G_K, \Z/2\Z).
\]
It remains to prove that the other inclusion holds.
Take some non-trivial $\chi_\alpha \in \textup{Sel}(G_K, \Z/2\Z)$. Observe that $\chi_\alpha \cup \chi_\alpha$ is trivial in 
\[
H^2(G_K, \Z/2\Z) \cong \text{Br}(K)[2],
\] 
since it is locally trivial everywhere. Indeed, this follows from the fact that $K(\sqrt{\alpha})/K$ is unramified at all places. Then there exists a cyclic degree $4$ extension $L/K$ containing $K(\sqrt{\alpha})$, say $L = K(\sqrt{\alpha}, \sqrt{\beta})$. Define $L_t := K(\sqrt{\alpha}, \sqrt{t\beta})$ with $t \in K^\ast$. Then every cyclic degree $4$ extension containing $K(\sqrt{\alpha})$ is of the shape $L_t$ for some $t \in K^\ast$. If we can show that there exists a $t$ such that $L_tK(\sqrt{n})/K(\sqrt{n})$ is unramified everywhere, then we have proven the other inclusion, since $\text{cores} \circ \text{res}$ is multiplication by $2$.

For every place $v$ of $K$ and every place $w$ of $L$ above $v$, we can find $t \in K^\ast$ such that $L_{t_w}/K_v$ is unramified. Then it follows from weak approximation that we can certainly find a $t$ such that $L_t/K$ is unramified at all primes above $2$ and $\infty$. Suppose that $L_t/K$ ramifies precisely at the odd places $v_1, \dots, v_k$, corresponding to prime ideals $\mathfrak{p}_1, \dots, \mathfrak{p}_k$. We order the $v_1, \dots, v_k$ such that the places $v_1, \dots, v_\ell$ are exactly the places unramified in $K(\sqrt{n})$. By our assumption on $n$ we can find some twist $t' \in K^\ast$ and an ideal $\mathfrak{q}$ of $K$ composed entirely of odd primes ramifying in $K(\sqrt{n})$ such that
\[
t' \equiv 1 \bmod 8, \quad t' \text{ is totally positive }, \quad (t') = \mathfrak{p}_1 \cdot \ldots \cdot \mathfrak{p}_\ell \mathfrak{q}.
\]
An application of Hensel's lemma shows that $t'$ is a square in every finite extension of $\Q_2$. Replacing $L_t$ by $L_{tt'}$ we see that $L_{tt'}/K$ can ramify only at odd places that ramify in $K(\sqrt{n})$. Furthermore such places have ramification index $2$. It follows that $L_{tt'}K(\sqrt{n})/K(\sqrt{n})$ is unramified, which completes the proof of the lemma.
\end{proof}

We now define another set of local conditions, which allows us to compare the Selmer group of interest $\textup{Sel}(G_{K(\sqrt{n})}, \Z/4\Z)$ with a Selmer group over $K$ with coefficients in $\Z/2\Z$. Write $\chi_n$ for the quadratic character corresponding to $K(\sqrt{n})$. For $m$ a power of $2$ define $\Z/m\Z(\chi_n)$ to be the module $\Z/m\Z$ twisted by the character $\chi_n$, i.e. $\sigma \ast a = \chi_n(\sigma) \cdot a$, where $\chi_n(\sigma)$ is viewed in $\{\pm 1\}$. Put
\[
\mathcal{L}_{v, n} = 
\left\{
\begin{array}{ll}
\{0, \chi_n\} \subseteq H^1(G_{K_v}, \Z/m\Z(\chi_n)) & \mbox{if } v \text{ finite and ramified in } K(\sqrt{n})\\
H^1_{\text{ur}}(G_{K_v}, \Z/m\Z(\chi_n))  & \mbox{if } v \text{ finite and unramified in } K(\sqrt{n})\\
0 & \mbox{if } v \text{ archimedean}.
\end{array}
\right.
\]
We define $\textup{Sel}_{\chi_n}(G_K, \Z/m\Z(\chi_n))$ to be the Selmer group in $K$ with local conditions $\mathcal{L}_{v, n}$. When $m=2$ we omit the $\chi_n$ from the coefficients as twisting has no effect on $\mathbb{Z}/2\mathbb{Z}$. We have the following fundamental result.

\begin{theorem} 
\label{thm:main_algebraic_thm}
Assumptions as in Lemma \ref{lCores2}. Then restriction induces an exact sequence
\begin{multline*}
0 \rightarrow \{0, \chi_n\} \rightarrow \textup{Sel}_{\chi_n}(G_K, \Z/2\Z) \xrightarrow{\textup{res}} \\
\textup{ker}\left(2\textup{Sel}(G_{K(\sqrt{n})}, \Z/4\Z) \xrightarrow{\textup{lift + cores}} \textup{Sel}(G_K, \Z/2\Z)\right) \rightarrow 0,
\end{multline*}
where we view $2\textup{Sel}(G_{K(\sqrt{n})}, \Z/4\Z)$ as a subspace of $\textup{Sel}(G_{K(\sqrt{n})}, \Z/2\Z)$, and the map $\textup{lift + cores}$ is defined in equation (\ref{eDefLC}).
\end{theorem}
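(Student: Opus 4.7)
Write $N = K(\sqrt{n})$. The strategy rests on the short exact sequence of $G_K$-modules
\[
0 \to \Z/2\Z \to \Z/4\Z(\chi_n) \to \Z/2\Z \to 0,
\]
whose extension class in $H^1(G_K, \Z/2\Z)$ is $\chi_n$. A direct cocycle calculation shows the resulting connecting homomorphism $H^1(G_K, \Z/2\Z) \to H^2(G_K, \Z/2\Z)$ sends $\chi_\alpha$ to the quaternion Brauer class $(\alpha, -n)_K$. For $\chi = \chi_\alpha \in \textup{Sel}_{\chi_n}(G_K, \Z/2\Z)$, the local conditions imply that $\alpha$ is a local norm from $K(\sqrt{-n})$ at every place: at ramified $v$ we have $\alpha \in \{1, n\} \pmod{K_v^{\ast 2}}$, and both $1$ and $n$ are local norms from $K(\sqrt{-n})$ (using the identity $(n,-n) = 1$); at archimedean and unramified finite $v$ a direct case analysis suffices. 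The Hasse norm theorem for the cyclic extension $K(\sqrt{-n})/K$ then yields $(\alpha,-n)_K = 0$, producing a lift $\tilde\chi \in H^1(G_K, \Z/4\Z(\chi_n))$. A local-to-global analysis shows $\tilde\chi$ may be modified (by an element of $\iota_\ast(H^1(G_K, \Z/2\Z))$) so as to satisfy the analogous twisted Selmer conditions $\{0, \chi_n\} \subset H^1(K_v, \Z/4\Z(\chi_n))$ at ramified places and unramified/trivial conditions elsewhere. Restricting to $G_N$ trivializes the twist, and the explicit shape of the local conditions forces $\textup{res}(\tilde\chi) \in \textup{Sel}(G_N, \Z/4\Z)$ (at ramified $v$ the class $\{0, \chi_n\}$ restricts to zero on $G_{N_w}$, with other places similar). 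Since $\pi_\ast(\textup{res}(\tilde\chi)) = \textup{res}(\chi)$, this places $\textup{res}(\chi)$ in $2\textup{Sel}(G_N, \Z/4\Z)$.

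For the two structural parts: the image of $\textup{res}$ lies in the kernel of $\textup{lift} + \textup{cores}$ because $\pi_\ast$ commutes with $\textup{cores}$ and $\textup{cores} \circ \textup{res} = [N:K] = 2$ on $\Z/2\Z$-cohomology (Lemma \ref{lCores0} makes this vacuous anyway, killing the entire cores map on $\Z/2\Z$-Sel and so identifying the claimed kernel with the whole of $2\textup{Sel}(G_N, \Z/4\Z)$). The kernel of restriction consists of characters of $G_K$ factoring through $\Gal(N/K)$, giving $\{0, \chi_n\}$, both of which are verified to lie in $\textup{Sel}_{\chi_n}(G_K, \Z/2\Z)$ by inspection of the local conditions.

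The principal difficulty is surjectivity. For $\psi \in 2\textup{Sel}(G_N, \Z/4\Z)$, Lemma \ref{lCores0} gives $\textup{cores}(\psi) = \psi + \tau^\ast\psi = 0$ in $\textup{Sel}(G_K, \Z/2\Z)$, whence $\tau^\ast\psi = \psi$ ($\Gal(N/K)$-invariance). Inflation-restriction for $N/K$ descends $\psi$ to some $\chi \in H^1(G_K, \Z/2\Z)$ modulo an $H^2(\Gal(N/K), \Z/2\Z) = \Z/2\Z$ obstruction, which is killed under our hypotheses. Verifying $\chi \in \textup{Sel}_{\chi_n}(G_K, \Z/2\Z)$ requires matching its local behaviour to the prescribed conditions: at unramified $v$ this is immediate from $\psi$, but at a ramified $v$ one only knows a priori that $\chi_v \in \{0, \chi_n\} + H^1_{\textup{ur}}(K_v, \Z/2\Z)$. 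The unramified error at each ramified place is corrected by twisting the descent by a carefully chosen global character, which is possible precisely because Lemma \ref{lCores2}'s second hypothesis — odd primes ramified in $N/K$ represent every class of $\textup{Cl}(K, 8\infty)$ — furnishes enough flexibility to adjust the ramification without altering $\textup{res}(\chi) = \psi$. This surjectivity step is the technical heart of the argument.
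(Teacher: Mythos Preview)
Your argument contains a genuine error in the parenthetical claim that Lemma~\ref{lCores0} ``identif[ies] the claimed kernel with the whole of $2\textup{Sel}(G_N,\Z/4\Z)$''. This is false: the map $\textup{lift}+\textup{cores}$ is \emph{surjective} onto $\textup{Sel}(G_K,\Z/2\Z)\cong\textup{Cl}^\vee(K)[2]$, as follows immediately from Lemma~\ref{lCores2} (every element of $\textup{Sel}(G_K,\Z/2\Z)$ is $\textup{cores}(x)$ for some $x\in\textup{Sel}(G_N,\Z/4\Z)$, and then $2x\mapsto\textup{cores}(x)$). So when $\textup{Cl}(K)[2]\neq 0$ the kernel is a proper subgroup, and the final dimension formula in the paper confirms this. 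Your reasoning that ``$\textup{cores}\circ\textup{res}=2$ on $\Z/2\Z$-cohomology'' only shows $2\,\textup{cores}(x)=0$, i.e.\ that $\textup{cores}(x)$ is $2$-torsion---which was already known---not that it vanishes. The correct reason $\textup{res}(\chi)$ lies in the kernel is that your lift $\textup{res}(\tilde\chi)$ with $\tilde\chi\in H^1(G_K,\Z/4\Z(\chi_n))$ satisfies $\tau^\ast(\textup{res}(\tilde\chi))=-\textup{res}(\tilde\chi)$ (the twist forces anti-invariance), so the \emph{untwisted} corestriction $x+\tau^\ast x$ vanishes; this is the ``dihedral'' observation the paper uses.

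This error propagates to your surjectivity argument, which attempts to descend an \emph{arbitrary} $\psi\in 2\textup{Sel}(G_N,\Z/4\Z)$. That cannot work, and the specific failure is twofold. First, the $H^2(\Gal(N/K),\Z/2\Z)$ obstruction you invoke is not ``killed under our hypotheses''; you give no argument, and indeed it does not always vanish. Second, your proposed correction of the local conditions at ramified places by ``twisting the descent by a carefully chosen global character'' is impossible without destroying $\textup{res}(\chi)=\psi$: any global twist $\rho$ preserving this must satisfy $\textup{res}(\rho)=0$, forcing $\rho\in\{0,\chi_n\}$, which gives nowhere near enough flexibility. The paper's surjectivity argument is quite different and crucially uses the hypothesis $\psi\in\ker(\textup{lift}+\textup{cores})$: one chooses a lift $x\in\textup{Sel}(G_N,\Z/4\Z)$ with $\textup{cores}(x)=0$, i.e.\ $x+\tau(x)=0$, and then takes $\chi=\textup{cores}_{\chi_n}(x)$ using the \emph{twisted} corestriction for $\Z/4\Z(\chi_n)$. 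This lands directly in $\textup{Sel}_{\chi_n}(G_K,\Z/2\Z)$ with no further adjustment, and $\textup{res}(\chi)=(1-\tau)x=2x=\psi$.
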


\begin{proof}
Take a squarefree integer $n$ satisfying the assumptions of Lemma \ref{lCores2}. By assumption the corestriction map
\begin{align}
\label{eWellDef}
\textup{Sel}(G_{K(\sqrt{n})}, \Z/2\Z) \rightarrow \textup{Sel}(G_K, \Z/2\Z)
\end{align}
is zero. Then we claim that there is a well-defined map
\begin{align}
\label{eDefLC}
2\textup{Sel}(G_{K(\sqrt{n})}, \Z/4\Z) \xrightarrow{\textup{lift + cores}} \textup{Sel}(G_K, \Z/2\Z),
\end{align}
which we describe now. Let $y \in 2\textup{Sel}(G_{K(\sqrt{n})}, \Z/4\Z)$ and take some lift $x \in \textup{Sel}(G_{K(\sqrt{n})}, \Z/4\Z)$ satisfying $y = 2x$. It follows from equation (\ref{eWellDef}) that $\text{cores}(x)$ does not depend on the choice of lift, so that the map lift and corestrict is indeed well-defined.

Now take some $\chi \in \textup{Sel}_{\chi_n}(G_K, \Z/2\Z)$. We start by verifying that
\begin{align}
\label{eClaim1}
\text{res}(\chi) \in \textup{ker}\left(2\textup{Sel}(G_{K(\sqrt{n})}, \Z/4\Z) \xrightarrow{\textup{lift + cores}} \textup{Sel}(G_K, \Z/2\Z)\right).
\end{align}
Note that $\text{res}(\chi)$ is naturally an element of $\textup{Sel}(G_{K(\sqrt{n})}, \Z/2\Z)$. The local conditions $\mathcal{L}_{v, n}$ imply that
\[
\chi \cup (\chi_n - \chi)
\]
is trivial in $H^2(G_K, \Z/2\Z)$. Now suppose that $\chi$ is not $0$ or $\chi_n$. Then $\chi \cup (\chi_n - \chi)$ gives a degree $4$ cyclic extension $L/K(\sqrt{n})$ that is dihedral over $K$. Write $F$ for the quadratic unramified extension of $K(\sqrt{n})$ given by $\chi$ and write $L = F(\sqrt{\beta})$ for some $\beta \in F^\ast$. Define for $t \in K^\ast$ the twist $L_t := F(\sqrt{t\beta})$. 

We claim that there exists some $t \in K^\ast$ such that $L_t/K(\sqrt{n})$ is unramified at all finite places. To deal with the ramification at places dividing $2$ or $\infty$, take such a place $v$ of $K$ and a place $w$ of $F$ above $v$. If $v$ ramifies in $K(\sqrt{n})$, then $\chi \cup (\chi_n - \chi)$ is already trivial in $H^2(\Gal(F_w/K_v), \mathbb{F}_2)$, so that the extension is locally split. Otherwise $\chi$ and $\chi_n - \chi$ are both unramified characters of $K_v$. This implies that there exists $t \in K^\ast$ such that $L_t/K(\sqrt{n})$ is unramified at all dyadic and archimedean places.

We now follow the argument in Lemma \ref{lCores2} to find a $t' \in K^\ast$ such that $L_{tt'}/K(\sqrt{n})$ is unramified at all dyadic and archimedean places and $L_{tt'}/K$ is unramified at all places $v$ of $K$ that are unramified in $K(\sqrt{n})$. This implies that $L_{tt'}/K(\sqrt{n})$ is unramified at all places. Indeed, otherwise inertia would be cyclic of order $4$ in the extension $L_{tt'}/K$. Note that there are only two elements of order $4$ in $\Gal(L_{tt'}/K) \cong D_4$, which correspond to the extension $L_{tt'}/K(\sqrt{n})$. This gives the desired contradiction since $F/K(\sqrt{n})$ is unramified. Hence $L_{tt'}/K(\sqrt{n})$ is unramified at all places as claimed.

We can now complete the proof of equation (\ref{eClaim1}). Indeed, let $\psi$ be an element of $\textup{Sel}(G_{K(\sqrt{n})}, \Z/4\Z)$ with fixed field $L_{tt'}$. Then clearly $2\psi = \text{res}(\chi)$. Furthermore, since $L_{tt'}/K$ is dihedral, we see that $\text{cores}(\psi) = \psi - \psi = 0$ as desired. 

Next we check that any element in the RHS of equation (\ref{eClaim1}) is in the image of restriction. Take $y \in 2\textup{Sel}(G_{K(\sqrt{n})}, \Z/4\Z)$ and take a lift $x \in \textup{Sel}(G_{K(\sqrt{n})}, \Z/4\Z)$ satisfying $y = 2x$. Write $\tau$ for the non-trivial generator of $\Gal(K(\sqrt{n})/K)$. By assumption we have that $x + \tau(x) = 0$. Now consider the corestriction map of the module $\Z/4\Z(\chi_n)$:
\[
\text{cores}_{\chi_n}\colon H^1(G_{K(\sqrt{n})}, \Z/4\Z(\chi_n)) \rightarrow H^1(G_K, \Z/4\Z(\chi_n)).
\]
Since $\Z/4\Z(\chi_n) = \Z/4\Z$ over $G_{K(\sqrt{n})}$, this induces a map
\[
\text{Sel}(G_{K(\sqrt{n})}, \Z/4\Z) \rightarrow \textup{Sel}_{\chi_n}(G_K, \Z/4\Z(\chi_n)).
\]
Since $\text{cores} = \text{cores}_{\chi_n}$ when restricted to $H^1(G_{K(\sqrt{n})}, \Z/2\Z)$, it follows that $\text{cores}_{\chi_n}$ lands in $\textup{Sel}_{\chi_n}(G_K, \Z/2\Z)$. Furthermore, since $\text{res} \circ \text{cores}_{\chi_n}$ is multiplication by $1 - \tau$, we see that
\[
\text{res}(\text{cores}_{\chi_n}(x)) = x - \tau(x) = 2x = y
\]
as desired. So far we have shown exactness of
\[
\textup{Sel}_{\chi_n}(G_K, \Z/2\Z) \rightarrow \textup{ker}\left(2\textup{Sel}(G_{K(\sqrt{n})}, \Z/4\Z) \xrightarrow{\textup{lift + cores}} \textup{Sel}(G_K, \Z/2\Z)\right) \rightarrow 0.
\]
It follows from Kummer theory that the kernel of the restriction map $H^1(G_K, \Z/2\Z) \rightarrow H^1(G_{K(\sqrt{n})}, \Z/2\Z)$ is generated by $\chi_n$. Since $\chi_n$ is clearly in $\textup{Sel}_{\chi_n}(G_K, \Z/2\Z)$, this finishes the proof.
\end{proof}

\subsection{The corestriction map} \label{sec:cores_map}
Now we restrict to $K$ quadratic over $\Q$. Write $\chi_{K/\Q}$ for the corresponding quadratic character. Since $K$ is quadratic over $\Q$, we have an exact sequence
\[
\{0, \chi_{K/\Q}\} \rightarrow H^1(G_\Q, \Z/2\Z) \xrightarrow{\text{res}} H^1(G_K, \Z/2\Z) \xrightarrow{\text{cores}} H^1(G_\Q, \Z/2\Z) \rightarrow 0.
\]
Indeed, this follows upon taking cohomology of the sequence
\[
0 \rightarrow \Z/2\Z \rightarrow \Z/2\Z[\Gal(K/\Q)] \rightarrow \Z/2\Z \rightarrow 0
\]
and applying Shapiro's lemma. For every squarefree integer $n$ this induces an exact sequence
\begin{align} \label{eq:selmer_exact_seq}
\{0, \chi_{K/\Q}\} \rightarrow \text{res}^{-1}(\textup{Sel}_{\chi_n}(G_K, \Z/2\Z)) \rightarrow \textup{Sel}_{\chi_n}(G_K, \Z/2\Z) \rightarrow \text{cores}(\textup{Sel}_{\chi_n}(G_K, \Z/2\Z)) \rightarrow 0.
\end{align}
 Note that both $\text{res}^{-1}(\textup{Sel}_{\chi_n}(G_K, \Z/2\Z))$ and $\text{cores}(\textup{Sel}_{\chi_n}(G_K, \Z/2\Z))$ are now subgroups of $H^1(G_\Q, \Z/2\Z)$. In this subsection we aim to give necessary local conditions for an element $\chi \in H^1(G_\Q, \Z/2\Z)$ to be in $\text{cores}(\textup{Sel}_{\chi_n}(G_K, \Z/2\Z))$. This will then be used in Section~\ref{sec:preparation of sum} to show that $\text{cores}(\textup{Sel}_{\chi_n}(G_K, \Z/2\Z))$ is the trivial group for $100\%$ of squarefree integers $n$.

In the previous subsection, for every squarefree integer $n$ and place $w$ of $K$, we defined subspaces $\mathcal{L}_{w, n}$ of $H^1(G_{K_w},\mathbb{Z}/2\mathbb{Z})$ given by
\[
\mathcal{L}_{w, n} = 
\left\{
\begin{array}{ll}
\{0, \chi_n\} \subseteq H^1(G_{K_w}, \Z/2\Z) & \mbox{if } w \text{ finite and ramified in } K(\sqrt{n})\\
H^1_{\text{ur}}(G_{K_w}, \Z/2\Z)  & \mbox{if } w \text{ finite and unramified in } K(\sqrt{n})\\
0 & \mbox{if } w \text{ archimedean}.
\end{array}
\right.
\]
We define the space $X_n \subseteq H^1(G_\Q, \Z/2\Z)$ by the exactness of the bottom row of the diagram

\begin{equation}
\begin{tikzcd}
\label{eCoresToQ}
0 \arrow{r} & \textup{Sel}_{\chi_n}(G_K, \Z/2\Z) \arrow{r} & H^1(G_K, \Z/2\Z) \arrow{r} \arrow{d}{\text{cores}} & \bigoplus_w H^1(G_{K_w}, \Z/2\Z)/\mathcal{L}_{w, n} \arrow{d}{\bigoplus_w \text{cores}} \\%
0 \arrow{r} & X_n \arrow{r} & H^1(G_\Q, \Z/2\Z) \arrow{r} & \bigoplus_v H^1(G_{\Q_v}, \Z/2\Z)/\text{cores}(\mathcal{L}_{w, n}).
\end{tikzcd}
\end{equation}
Note that $\text{cores}(\mathcal{L}_{w, n})$ depends only on the place $v$ of $\Q$ below $w$, so that the above commutative diagram makes sense.

\begin{lemma} \label{lem:contained_in_cor}
We have
\[
\textup{cores}(\textup{Sel}_{\chi_n}(G_K, \Z/2\Z)) \subseteq X_n.
\]
\end{lemma}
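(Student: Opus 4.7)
The strategy is a straightforward diagram chase on the commutative square \eqref{eCoresToQ}; the only point that really requires thought is verifying that the right-hand square of \eqref{eCoresToQ} commutes.

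First I would verify commutativity of that right-hand square. This is essentially the local-global compatibility of corestriction, which asserts that for any $\chi \in H^1(G_K, \Z/2\Z)$ and any place $v$ of $\Q$,
\[
\textup{cores}(\chi)|_{G_{\Q_v}} \;=\; \sum_{w \mid v} \textup{cores}_{K_w/\Q_v}\bigl(\chi|_{G_{K_w}}\bigr)
\]
in $H^1(G_{\Q_v}, \Z/2\Z)$. Granting this identity, I still need to check that the right-hand side lies in the distinguished subgroup $\textup{cores}(\mathcal{L}_{w,n})$ appearing in the bottom-right corner of \eqref{eCoresToQ} whenever each $\chi|_{G_{K_w}}$ lies in $\mathcal{L}_{w,n}$. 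For $v$ inert or ramified in $K/\Q$ there is a unique $w \mid v$, so this is immediate from the definition. For $v$ split as $w_1, w_2$ the corestrictions from $K_{w_i} = \Q_v$ are just the identity, and the two local conditions $\mathcal{L}_{w_1,n}$ and $\mathcal{L}_{w_2,n}$ coincide, because both whether $w_i$ ramifies in $K(\sqrt{n})$ and the restriction $\chi_n|_{G_{K_{w_i}}}$ depend only on $v$. Consequently the sum lies in $\mathcal{L}_{w,n} + \mathcal{L}_{w,n} = \mathcal{L}_{w,n}$, which equals $\textup{cores}(\mathcal{L}_{w,n})$ for any chosen $w \mid v$. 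Archimedean places cause no issue since $\mathcal{L}_{w,n} = 0$ there.

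With commutativity in hand, the lemma follows by a one-step chase. Take $\chi \in \textup{Sel}_{\chi_n}(G_K, \Z/2\Z)$. By definition its image in $\bigoplus_w H^1(G_{K_w}, \Z/2\Z)/\mathcal{L}_{w,n}$ vanishes, so by commutativity the image of $\textup{cores}(\chi)$ in $\bigoplus_v H^1(G_{\Q_v}, \Z/2\Z)/\textup{cores}(\mathcal{L}_{w,n})$ also vanishes. Exactness of the bottom row of \eqref{eCoresToQ}, which holds by the very definition of $X_n$, then places $\textup{cores}(\chi)$ inside $X_n$. The only step with any content is the well-definedness at split places discussed above; everything else is formal.
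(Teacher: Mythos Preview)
Your proposal is correct and follows exactly the approach of the paper, which dispatches the lemma in a single sentence by appealing to the commutative diagram \eqref{eCoresToQ}. You have simply unpacked the implicit verification that the right-hand square commutes (via local--global compatibility of corestriction, with the case distinction at split places), which the paper takes for granted; the subsequent diagram chase is identical.
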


\begin{proof}
This follows from the commutative diagram in equation (\ref{eCoresToQ}).
\end{proof}

We now derive necessary local conditions for $X_n$.

\begin{theorem}
\label{tLocal}
Let $K$ be a quadratic extension of $\Q$ with discriminant $\Delta$. If $\chi_d \in X_n$ for some squarefree integer $d$, then $d$ satisfies the following conditions:
\begin{itemize}
\item if an odd prime $p$ divides $d$, then $p$ divides $\Delta$, or $p$ divides $n$ and $p$ splits in $K$; 
\item the Hilbert symbol $(d, -n)_p = 1$ for all odd primes $p$ such that $p \mid n$, $p \nmid \Delta$ and $p$ splits in $K$;
\item $d$ is a square modulo $p$ at all odd primes $p$ such that $p \mid n$, $p \nmid \Delta$ and $p$ is inert in $K$.
\end{itemize}
\end{theorem}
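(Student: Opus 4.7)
The plan is to translate membership in $X_n$ into explicit local conditions at each odd prime. Using Kummer theory to identify $H^1(G_{K_w},\mathbb{Z}/2\mathbb{Z}) \cong K_w^{\ast}/K_w^{\ast 2}$ and $H^1(G_{\mathbb{Q}_v},\mathbb{Z}/2\mathbb{Z}) \cong \mathbb{Q}_v^{\ast}/\mathbb{Q}_v^{\ast 2}$, the corestriction map corresponds to the local norm $N_{K_w/\mathbb{Q}_v}$. The hypothesis $\chi_d \in X_n$ is then equivalent to asking that, for every place $v$ of $\mathbb{Q}$, the class of $d$ in $\mathbb{Q}_v^{\ast}/\mathbb{Q}_v^{\ast 2}$ lie in $\sum_{w \mid v} N_{K_w/\mathbb{Q}_v}(\mathcal{L}_{w,n})$. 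It thus suffices to compute this sum at each odd prime $p$ (the theorem imposes no conditions at $p \mid \Delta$ or at $2$ and $\infty$) and to read off the three bullets.

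I would then run through a short case analysis. If $p \nmid n$, then $\mathcal{L}_{w,n}$ is the unramified subgroup of $K_w^{\ast}/K_w^{\ast 2}$; since norms of units are units, the corestricted condition forces $\chi_d$ to be unramified at $p$, ruling out $p \mid d$ unless $p \mid \Delta$. If $p \mid n$ and $p$ is inert in $K$, then $\mathcal{L}_{w,n} = \{1,n\} \subseteq K_w^{\ast}/K_w^{\ast 2}$ and $N_{K_w/\mathbb{Q}_p}(n) = n^2$ is trivial modulo squares, so $\textup{cores}(\mathcal{L}_{w,n}) = \{1\}$ and $d$ must be a square in $\mathbb{Q}_p^{\ast}$; for squarefree $d$ this is precisely the third bullet (and incidentally rules out $p \mid d$). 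If $p \mid n$ and $p$ splits in $K$, the two places $w_1, w_2$ above $p$ satisfy $K_{w_i} = \mathbb{Q}_p$ with corestriction the identity, and each $\mathcal{L}_{w_i, n} = \{1, n\}$; summing yields $\textup{cores}\bigl(\bigoplus_{w\mid p}\mathcal{L}_{w,n}\bigr) = \{1, n\} \subseteq \mathbb{Q}_p^{\ast}/\mathbb{Q}_p^{\ast 2}$.

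The last step is to identify the condition $d \in \{1,n\}$ modulo $\mathbb{Q}_p^{\ast 2}$ with the Hilbert-symbol identity $(d, -n)_p = 1$. This follows from the standard formula $(p^\alpha u, p^\beta v)_p = (-1)^{\alpha\beta(p-1)/2}(u/p)^\beta(v/p)^\alpha$ at odd primes by treating the subcases $p \nmid d$ and $p \mid d$ separately; in each the condition collapses to a single Legendre-symbol identity that matches the split-and-ramified case. Assembling the three cases gives exactly the three bullets of the theorem, with the first bullet arising from the valuation restrictions in each case and the second and third bullets from the explicit local conditions at split and inert primes dividing $n$. The main technical point is the Hilbert-symbol bookkeeping in the split case; everything else is a routine application of Kummer theory and local class field theory.
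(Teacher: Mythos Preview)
Your argument is correct and follows essentially the same route as the paper: compute $\textup{cores}(\mathcal{L}_{w,n})$ at each odd prime $p\nmid\Delta$ by splitting into the cases $p\nmid n$, $p\mid n$ split, and $p\mid n$ inert, then read off the three bullets. The only difference is one of presentation: you make explicit the Kummer/norm interpretation of corestriction and carry out the Hilbert-symbol bookkeeping showing that $d\in\{1,n\}\bmod\mathbb{Q}_p^{\ast 2}$ is equivalent to $(d,-n)_p=1$ (using $(n,-n)_p=1$ and that $v_p(n)=1$ forces $-n\notin\mathbb{Q}_p^{\ast 2}$), whereas the paper simply asserts the values of $\textup{cores}(\mathcal{L}_{w,n})$ and leaves this identification implicit. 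One cosmetic point: the paper's definition of $X_n$ uses $\textup{cores}(\mathcal{L}_{w,n})$ for a single $w\mid v$ (noting this is independent of the choice), rather than your $\sum_{w\mid v}N_{K_w/\mathbb{Q}_v}(\mathcal{L}_{w,n})$; the two coincide in every case here, but it would be cleaner to match the paper's formulation directly.
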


\begin{proof}
We start by taking an odd prime $p$ such that $p \mid n$ and $p \nmid \Delta$. Let $w$ be a place of $K$ above $p$. Then $\text{cores}(\mathcal{L}_{w, n}) = 0$ if $p$ is inert in $K$, while $\text{cores}(\mathcal{L}_{w, n}) = \{0, \chi_n\}$ if $p$ splits in $K$. This shows that $d$ must satisfy the last two conditions. Now observe that for any odd prime $p$ that does not divide $\Delta$ or $n$, we have that
\[
\text{cores}(\mathcal{L}_{w, n}) = H^1_{\text{ur}}(G_{\Q_p}, \Z/2\Z)
\]
for any place $w$ of $K$ above $p$. Therefore, if we take an odd prime $p$ dividing $d$, then this certainly implies that $p \mid \Delta n$. 

It now suffices to show that if $p \nmid \Delta$ and $p \mid n$, then $p$ splits in $K$. Suppose for the sake of contradiction that $p$ is inert in $K$. But we have already seen that $\text{cores}(\mathcal{L}_{w, n}) = 0$ for such $p$. Then $p$ clearly can not divide $d$, which gives the desired contradiction.
\end{proof}

\subsection{Dual Selmer groups} \label{sec:dual_Selmer}
We have now derived the necessary algebraic tools to show that $\textup{cores}(\textup{Sel}_{\chi_n}(G_K, \Z/2\Z))$ is trivial for $100\%$ of the squarefree integers $n$. This leaves us with computing the dimension of $\text{res}^{-1}(\textup{Sel}_{\chi_n}(G_K, \Z/2\Z))$. As we will shortly see, $\text{res}^{-1}(\textup{Sel}_{\chi_n}(G_K, \Z/2\Z))$ is itself a Selmer group. 
To compute its size, we will use a formula due to Greenberg and Wiles which relates the size of a Selmer group to that of its dual. 

Let us start by defining dual Selmer groups. Let $A$ be a finite, discrete $G_K$-module for some number field $K$. We define the dual module $A^\ast$ to be $\text{Hom}(A, \Q/\Z(1))$, where $\Q/\Z(1)$ is the Tate twist of $\Q/\Z$. Note that despite the fact that $A$ and $\Q/\Z(1)$ are both $G_K$-modules, the above $\text{Hom}$ is to be taken in the category of abelian groups. Then $A^\ast$ becomes a $G_K$-module, where the action is given by
\[
(\sigma \cdot f)(-) = \sigma \ast f(\sigma^{-1} (-)),
\]
where $\ast$ is the action of $G_K$ on $\Q/\Z(1)$. If $v$ is a place of $K$, we get the so-called local Tate pairing
\[
H^1(G_{K_v}, A) \times H^1(G_{K_v}, A^\ast) \rightarrow H^2(G_{K_v}, \Q/\Z(1)) = \text{Br}(K_v) \xrightarrow{\text{inv}_v} \Q/\Z,
\]
where the first map is induced by the cup product and the second is the local invariant map. It is known that this pairing is non-degenerate. Hence given some local conditions $\mathcal{L}_v \subseteq H^1(G_{K_v}, A)$, there is a well-defined dual condition $\mathcal{L}_v^\ast \subseteq H^1(G_{K_v}, A^\ast)$, defined by taking the orthogonal complement of $\mathcal{L}_v$ under the local Tate pairing. The dual conditions $\mathcal{L}_v^\ast$ form a Selmer structure (as a consequence of \cite[Theorem 7.2.15)]{NS}) which allows us to define the dual Selmer group as
\[
\text{Sel}(G_K, A^\ast, \{\mathcal{L}_v^\ast\}_v).
\]
In our special case we have $A = \Z/2\Z$ with trivial action, so that $A^\ast$ is also $\Z/2\Z$ with trivial action. In this case the Tate pairing is just given by the quadratic Hilbert symbol.

Now for  a place $v$ of $\mathbb{Q}$ and a place $w\mid v$ of $K$, consider the Selmer conditions 
\[
 \mathcal{L}'_{v,n}=\textup{res}_{K_w/\mathbb{Q}_v}^{-1}(\mathcal{L}_{w,n})\subseteq H^1(G_{\mathbb{Q}_v},\mathbb{Z}/2\mathbb{Z}).
\]
This subspace is independent of the choice of $w$ dividing $v$, and the Selmer group associated to the collection $\{\mathcal{L}_{v,n}'\}$ is equal to $\textup{res}^{-1}\left(\textup{Sel}_{\chi_n}(G_K,\mathbb{Z}/2\mathbb{Z})\right)$.  The dual local conditions turn out to be rather similar to the local conditions appearing in Theorem \ref{tLocal}, enabling us to show also that the dual Selmer group is trivial for $100\%$ of the squarefree integers $n$. This will make the formula of Greenberg and Wiles particularly pleasant to use. We begin by describing the local conditions $\mathcal{L}'_{v,n}$ more explicitly. 

\begin{lemma} \label{lem:explicit_L_n_prime_conditions}
Let $K/\mathbb{Q}$ be a quadratic extension with associated quadratic character $\chi_{K/\mathbb{Q}}$. Let $v$ be a place of $\mathbb{Q}$, let $w\mid v$ be any place of $K$ lying over $v$, and let $n$ be a squarefree integer.  If $v$ is archimedean then we have 
\[\mathcal{L}_{v,n}'=\begin{cases} 0~~&~~K/\mathbb{Q}\textup{ real,}\\
\textup{Hom}(G_\mathbb{R},\mathbb{Z}/2\mathbb{Z})~~&~~K/\mathbb{Q}\textup{ imaginary}. \end{cases}\]
If $v$ is nonarchimedean, writing $\Delta$ for the discriminant of $K/\mathbb{Q}$, we have 
\[\mathcal{L}_{v,n}'=\begin{cases} \{0,\chi_{K/\mathbb{Q}}\}+H^1_\textup{ur}(G_{\mathbb{Q}_v},\mathbb{Z}/2\mathbb{Z})~~&~~n\in K_w^{\ast 2},~v\mid \Delta,\\
H^1_{\textup{ur}}(G_{\mathbb{Q}_v},\mathbb{Z}/2\mathbb{Z})~~&~~n\in K_w^{\ast 2},~v\nmid \Delta,\\
\{0,\chi_n\}+\{0,\chi_{K/\mathbb{Q}}\}~~&~~n\notin K_w^{\ast 2}.
\end{cases}\]
\end{lemma}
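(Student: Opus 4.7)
My plan is a direct local computation using Kummer theory, identifying $H^1(G_F, \Z/2\Z) \cong F^\ast/F^{\ast 2}$ for each local field $F$ appearing, so that restriction from $\Q_v$ to $K_w$ is the map induced by the inclusion $\Q_v^\ast \hookrightarrow K_w^\ast$. The kernel of this restriction is $\{0,\chi_{K/\mathbb{Q}}|_{\Q_v}\}$, nontrivial precisely when $v$ is non-split in $K/\mathbb{Q}$. A quick calculation gives $\Q_v^\ast \cap K_w^{\ast 2} = \Q_v^{\ast 2} \cdot \langle \Delta \rangle$, so the condition $n \in K_w^{\ast 2}$ (for $n \in \Q_v^\ast$) is equivalent to $\chi_n|_{\Q_v} \in \{0, \chi_{K/\mathbb{Q}}|_{\Q_v}\}$; this cleanly separates the two nonarchimedean cases. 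The archimedean case is then immediate: if $K/\mathbb{Q}$ is real, then $\Q_v = K_w = \mathbb{R}$ and restriction is the identity, so $\mathcal{L}'_{v,n} = \mathcal{L}_{w,n} = 0$; if $K/\mathbb{Q}$ is imaginary, then $K_w = \mathbb{C}$ and $H^1(G_{K_w},\Z/2\Z) = 0$, so the condition $\mathcal{L}_{w,n} = 0$ is vacuous and $\mathcal{L}'_{v,n}$ is all of $\textup{Hom}(G_\mathbb{R}, \Z/2\Z)$.

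For the nonarchimedean case with $n \in K_w^{\ast 2}$, the identity $K_w(\sqrt{n}) = K_w$ makes $w$ trivially unramified in $K(\sqrt n)$ and forces $\mathcal{L}_{w,n} = H^1_{\textup{ur}}(G_{K_w}, \Z/2\Z)$. The containment $H^1_{\textup{ur}}(G_{\Q_v}, \Z/2\Z) + \{0, \chi_{K/\mathbb{Q}}\} \subseteq \mathcal{L}'_{v,n}$ is clear (unramified classes of $\Q_v$ pull back via the inclusion of inertia groups, and $\chi_{K/\mathbb{Q}}$ is in the kernel of restriction). The reverse inclusion splits on whether $v \mid \Delta$: when $v \nmid \Delta$, $K_w/\Q_v$ is unramified, so $\chi_{K/\mathbb{Q}}$ is itself unramified at $v$ and any uniformizer of $\Q_v$ remains a uniformizer of $K_w$, forcing $\mathcal{L}'_{v,n} = H^1_{\textup{ur}}(G_{\Q_v}, \Z/2\Z)$; when $v \mid \Delta$, the ramification index of $K_w/\Q_v$ is $2$, so $\pi_v$ becomes a unit times a square in $K_w^\ast/K_w^{\ast 2}$, and an explicit computation in $\Q_v^\ast/\Q_v^{\ast 2}$ establishes the claimed equality $\mathcal{L}'_{v,n} = \{0, \chi_{K/\mathbb{Q}}\} + H^1_{\textup{ur}}(G_{\Q_v}, \Z/2\Z)$.

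For the nonarchimedean case with $n \notin K_w^{\ast 2}$, the extension $K_w(\sqrt n)/K_w$ is a nontrivial quadratic extension, and $\chi_n|_{K_w}$ is a nonzero element of $\mathcal{L}_{w,n}$ whether this extension is ramified (so $\mathcal{L}_{w,n} = \{0, \chi_n|_{K_w}\}$) or unramified (so $\mathcal{L}_{w,n} = H^1_{\textup{ur}}(G_{K_w}, \Z/2\Z)$, whose unique nontrivial element is then $\chi_n|_{K_w}$). Hence $\{0, \chi_n\} + \{0, \chi_{K/\mathbb{Q}}\} \subseteq \mathcal{L}'_{v,n}$. For the reverse inclusion, the short exact sequence
\[
0 \longrightarrow \ker(\textup{res}|_{\mathcal{L}'_{v,n}}) \longrightarrow \mathcal{L}'_{v,n} \longrightarrow \mathcal{L}_{w,n} \cap \textup{image}(\textup{res}) \longrightarrow 0
\]
has kernel equal to $\{0, \chi_{K/\mathbb{Q}}|_{\Q_v}\}$ (since $\chi_{K/\mathbb{Q}}$ lies in $\mathcal{L}'_{v,n}$) and image of $\FF_2$-dimension at most $\dim \mathcal{L}_{w,n} = 1$, spanned precisely by $\chi_n|_{K_w}$. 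Combining these with the linear independence of $\chi_n$ and $\chi_{K/\mathbb{Q}}$ in $\Q_v^\ast/\Q_v^{\ast 2}$ (which is exactly the hypothesis $n \notin K_w^{\ast 2}$, once one knows $\chi_{K/\mathbb{Q}}$ is nontrivial locally) matches the $\FF_2$-dimension of $\{0, \chi_n\} + \{0, \chi_{K/\mathbb{Q}}\}$ and yields equality.

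The main technical obstacle lies at $v = 2$, where $\dim_{\FF_2} H^1(G_{\Q_2}, \Z/2\Z) = 3$ and the structure of squares in $\Q_2$ and its quadratic extensions is more delicate (depending on congruences modulo $8$). Each splitting type of $2$ in $K/\mathbb{Q}$ — split, inert, or ramified — will require its own small Kummer-theoretic computation to confirm the dimension count and the explicit description of $\mathcal{L}'_{v,n}$ given in the lemma; in particular in the ramified case one must track whether $K_w \subseteq \Q_2(\zeta_8)$ to correctly identify the image of $\Q_2^\ast/\Q_2^{\ast 2}$ inside $K_w^\ast/K_w^{\ast 2}$.
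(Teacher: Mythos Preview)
Your approach is correct and close in spirit to the paper's, but organized differently: the paper cases on the splitting behaviour of $v$ in $K/\Q$ (split, ramified, inert), whereas you case on whether $n\in K_w^{\ast 2}$. The paper's key observation is that in each non-split case there is an explicit one-dimensional subspace $\mathcal{M}_{v,n}\subseteq H^1(G_{\Q_v},\Z/2\Z)$ (namely $\{0,\chi_n\}$ or $H^1_{\textup{ur}}(G_{\Q_v},\Z/2\Z)$, according as $n\notin K_w^{\ast 2}$ or $n\in K_w^{\ast 2}$) which restriction carries \emph{bijectively} onto $\mathcal{L}_{w,n}$; the identity $\mathcal{L}'_{v,n}=\mathcal{M}_{v,n}+\ker(\textup{res}_{K_w/\Q_v})$ then falls out immediately and uniformly at all places, including $v=2$.

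Your concern about $v=2$ in the final paragraph is unwarranted, and the ``explicit computation'' you defer in the ramified case is unnecessary. The short exact sequence you already wrote down for the case $n\notin K_w^{\ast 2}$ works verbatim for the case $n\in K_w^{\ast 2}$, $v\mid\Delta$: it gives
\[
\dim_{\FF_2}\mathcal{L}'_{v,n}\ \leq\ \dim_{\FF_2}\ker(\textup{res}_{K_w/\Q_v})+\dim_{\FF_2}\mathcal{L}_{w,n}\ =\ 1+1\ =\ 2,
\]
and since you have already shown the containment $H^1_{\textup{ur}}(G_{\Q_v},\Z/2\Z)+\{0,\chi_{K/\Q}\}\subseteq\mathcal{L}'_{v,n}$ with the left side of dimension~$2$ (as $\chi_{K/\Q}$ is ramified at $v$), equality follows. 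No separate $2$-adic Kummer analysis is needed; in particular the remark about tracking whether $K_w\subseteq\Q_2(\zeta_8)$ can be dropped. (Note also that your aside ``$\pi_v$ becomes a unit times a square'' is not by itself enough at $v=2$, since units need not give unramified characters there; but as just explained, you don't need that step.)
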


\begin{proof}
 If $v$ is archimedean then $\mathcal{L}_{v,n}'$ is the subspace of $H^1(G_\mathbb{R},\mathbb{Z}/2\mathbb{Z})$ consisting of characters whose restriction to $G_{K_w}$ is trivial, and the claimed description follows. 
 
 From now on we suppose that $v$ is nonarchimedean. If $v$ splits in $K/\mathbb{Q}$ then the restriction map $\textup{res}_{K_w/\mathbb{Q}_v}$ is an isomorphism, and the claimed description of $\mathcal{L}_{v,n}'$ follows immediately from the definition of the the local conditions $\mathcal{L}_{w,n}$. 
 
 Next, suppose that $v$ ramifies in $K/\mathbb{Q}$, so that $K_w/\mathbb{Q}_v$ is a ramified quadratic extension. Then the subspace 
\[
\mathcal{M}_{v,n}=\begin{cases}\{0,\chi_n\}~~&~~n\notin K_w^{\ast 2}\\ H^1_\textup{ur}(G_{\mathbb{Q}_v},\mathbb{Z}/2\mathbb{Z})~~&~~ n\in K_w^{\ast 2}\end{cases}\]
of $H^1(G_{\mathbb{Q}_v},\mathbb{Z}/2\mathbb{Z})$ maps bijectively onto $\mathcal{L}_{w,n}$ under restriction, so we see that
\[
\mathcal{L}'_{v,n}=\mathcal{M}_{v,n}+\ker(\textup{res}_{K_w/\mathbb{Q}_v})=\mathcal{M}_{v,n}+\{0,\chi_{K/\mathbb{Q}}\},
\]
which has the required form. 

Finally, suppose that $v$ is inert in $K/\mathbb{Q}$ so that $K_w/\mathbb{Q}_v$ is the unique degree $2$ unramified extension of $\mathbb{Q}_v$. If $\chi_n$ is trivial when restricted to $G_{K_w}$ then $\mathcal{L}_{w,n}=H^1_{\textup{ur}}(G_{K_w},\mathbb{Z}/2\mathbb{Z})$ and we see that $\mathcal{L}_{v,n}'=H^1_{\textup{ur}}(G_{\mathbb{Q}_v},\mathbb{Z}/2\mathbb{Z})$.  Otherwise, $\chi_n$ is non-trivial when restricted to $G_{K_w}$, $\mathcal{L}_{w,n}=\{0,\chi_n\}$, and  
\[
\mathcal{L}_{v,n}'=\{0,\chi_n\}+\{0,\chi_{K/\Q}\}. 
\]
This completes the proof.
\end{proof}

Now write $Y_n \subseteq H^1(G_\Q, \Z/2\Z)$ for the dual Selmer group of $\text{res}^{-1}(\textup{Sel}_{\chi_n}(G_K, \Z/2\Z))$. The following result gives simple necessary conditions satisfied by any character in $Y_n$.

\begin{theorem}
\label{tDualLocal}
Let $K$ be a quadratic extension of $\Q$ with discriminant $\Delta$. If $\chi_d \in Y_n$ for some squarefree integer $d$, then $d$ satisfies the following conditions:
\begin{itemize}
\item if an odd prime $p$ divides $d$, then $p$ divides $\Delta$, or $p$ divides $n$ and $p$ splits in $K$; 
\item the Hilbert symbol $(d, n)_p = 1$ for all odd primes $p$ such that $p \mid n$, $p \nmid \Delta$, and $p$ splits in $K$;
\item $d$ is a square modulo $p$ at all odd primes $p$ such that $p \mid n$, $p \nmid \Delta$ and $p$ is inert in $K$.
\end{itemize}
\end{theorem}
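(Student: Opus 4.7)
The strategy is a place-by-place analysis of the dual local conditions, paralleling the proof of Theorem \ref{tLocal} but now working with the subspaces $\mathcal{L}'_{v,n}$ rather than the corestriction images $\textup{cores}(\mathcal{L}_{w,n})$. Since membership in $Y_n$ means precisely that $\chi_d|_{\Q_p}$ is orthogonal to $\mathcal{L}'_{p,n}$ under the local Tate pairing at every place—and for $\Z/2\Z$ coefficients this pairing is the quadratic Hilbert symbol—it suffices to compute $(\mathcal{L}'_{p,n})^{\perp}$ at each odd prime $p$, reading $\mathcal{L}'_{p,n}$ directly from Lemma \ref{lem:explicit_L_n_prime_conditions}.

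First I would treat the case $p \nmid \Delta n$, which is the source of the first conclusion. I will argue that in every subcase of Lemma \ref{lem:explicit_L_n_prime_conditions} one has $\mathcal{L}'_{p,n} = H^1_{\textup{ur}}(G_{\Q_p}, \Z/2\Z)$. When $p$ is inert in $K$, every unit of $\Q_p$ is a square in the unramified quadratic extension $K_w$ (because $\mathbb{F}_p^{\ast} \subset \mathbb{F}_{p^2}^{\ast 2}$), so the middle case of the lemma applies. When $p$ splits and $n$ is a residue mod $p$, the same case applies. When $p$ splits but $n$ is a non-residue, the third case of the lemma gives $\mathcal{L}'_{p,n} = \{0,\chi_n\} + \{0,\chi_{K/\Q}\}$, but $\chi_{K/\Q}|_{\Q_p}$ is trivial while $\chi_n|_{\Q_p}$ generates $H^1_\textup{ur}$, so again $\mathcal{L}'_{p,n} = H^1_\textup{ur}$. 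Since $H^1_\textup{ur}$ is self-orthogonal under the Hilbert symbol at odd primes, orthogonality forces $\chi_d$ to be unramified at $p$, which for squarefree $d$ yields $p \nmid d$.

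Next I would handle $p \mid n$ with $p \nmid \Delta$. Since $v_p(n) = 1$ is odd in both the split and inert cases, the valuation at any $w \mid p$ in $K$ is also odd, so $n \notin K_w^{\ast 2}$ and Lemma \ref{lem:explicit_L_n_prime_conditions} yields $\mathcal{L}'_{p,n} = \{0,\chi_n\} + \{0,\chi_{K/\Q}\}$. When $p$ is inert, $\chi_n|_{\Q_p}$ is ramified while $\chi_{K/\Q}|_{\Q_p}$ is the non-trivial unramified character; these are linearly independent, so $\mathcal{L}'_{p,n}$ fills the two-dimensional $H^1(G_{\Q_p},\Z/2\Z)$ and orthogonality forces $\chi_d|_{\Q_p}=0$. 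By squarefreeness this means $d$ is a unit at $p$ that is a square mod $p$, giving the third conclusion. When $p$ splits, $\chi_{K/\Q}|_{\Q_p}$ is trivial, so $\mathcal{L}'_{p,n} = \{0,\chi_n\}$, and orthogonality translates directly into the Hilbert-symbol condition $(d, n)_p = 1$, giving the second conclusion.

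I do not expect a serious obstacle: the argument is mechanical once Lemma \ref{lem:explicit_L_n_prime_conditions} is in hand, and amounts to a careful enumeration of sub-cases together with standard facts about the Hilbert symbol at odd primes. The only care required is in correctly tracking the ramification and local triviality of $\chi_n|_{\Q_p}$ and $\chi_{K/\Q}|_{\Q_p}$ in each sub-case; note in particular that the three conditions in the theorem concern only odd primes $p \nmid \Delta$, so the analysis at primes dividing $\Delta$, at $p=2$, and at the archimedean place (where further constraints on $d$ do arise from $(\mathcal{L}'_{v,n})^{\perp}$) need not be recorded for the statement we are proving.
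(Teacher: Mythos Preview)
Your proposal is correct and follows essentially the same approach as the paper's own proof: read off $\mathcal{L}'_{p,n}$ from Lemma~\ref{lem:explicit_L_n_prime_conditions} at each odd prime $p\nmid\Delta$ and then compute its orthogonal complement under the Hilbert pairing. Your write-up is in fact more detailed than the paper's, which simply asserts that $\mathcal{L}'_{p,n}$ equals $H^1_{\textup{ur}}$, $\{0,\chi_n\}$, or all of $H^1(G_{\Q_p},\Z/2\Z)$ in the three relevant cases and leaves the orthogonal-complement computation to the reader; in particular you explicitly verify the sub-case $p\nmid\Delta n$ split with $n$ a non-residue, which the paper handles implicitly.
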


\begin{proof}
Let us consider an odd prime $p$ such that $p \mid n$ and $p \nmid \Delta$. The claimed conditions   arise from insisting  that $\chi_d$ is orthogonal to $\mathcal{L}_{p,n}'$ at all such $p$. To see this,  note that if $p\mid n$ then  \Cref{lem:explicit_L_n_prime_conditions} shows that $\mathcal{L}_{p,n}'$ is equal to $\{0,\chi_n\}$ or $H^1(G_{\mathbb{Q}_p},\mathbb{Z}/2\mathbb{Z})$ according to whether $p$ is split or inert in $K/\mathbb{Q}$ respectively. If $p\nmid n$ then $\chi_n$ is unramified at $p$ and \Cref{lem:explicit_L_n_prime_conditions} gives $\mathcal{L}_{p,n}'=H^1_{\text{ur}}(G_{\Q_p}, \Z/2\Z)$. Computing the orthogonal complement of these subspaces gives the result. 
\end{proof}

\subsection{\texorpdfstring{A formula for the dimension of $\textup{Sel}_{\chi_n}(G_K, \Z/2\Z)$}{A formula for the dimension of Sel}} \label{sec:wiles_greenberg}
We now give a formula for the $\mathbb{F}_2$-dimension of $\textup{Sel}_{\chi_n}(G_K, \Z/2\Z)$  which is valid under certain simplifying assumptions. In the next two sections we will use the explicit descriptions of $X_n$ and $Y_n$ given above to show that these assumptions are satisfied for $100\%$ of squarefree $n$. 
 
Let us begin by stating the aforementioned formula due to Greenberg and Wiles  \cite[Proposition 1.6]{wiles1995modular} (see also \cite[Proposition 2.35]{Mazur_Rubin} or \cite[Theorem 2]{washington} for the form presented here).
 
\begin{theorem}[Greenberg, Wiles] \label{thm:wiles_greenberg}
Let $K$ be a number field, let $A$ be a finite $G_K$-module, and let $\{\mathcal{L}_v\}$ be a Selmer structure for $A$. Then we have 
\[
\frac{|\textup{Sel}(G_K,A,\{\mathcal{L}_v\}_v)|}{|\textup{Sel}(G_K, A^*,\{\mathcal{L}_v^*\}_v)|}=\frac{|A^{G_K}|}{|(A^*)^{G_K}|}\cdot \prod_{v~\textup{place of }K}\frac{|\mathcal{L}_v|}{|A^{G_{K_v}}|}.
\]
\end{theorem}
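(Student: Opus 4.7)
The formula is classical, and the strategy is to deduce it from Poitou--Tate global duality combined with local Tate duality and Tate's global Euler--Poincar\'e characteristic formula.

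The key input is the Cassels--Poitou--Tate exact sequence
\[
0 \to \textup{Sel}(G_K, A, \{\mathcal{L}_v\}) \to H^1(G_K, A) \to \bigoplus_v H^1(G_{K_v}, A)/\mathcal{L}_v \to \textup{Sel}(G_K, A^*, \{\mathcal{L}_v^*\})^\vee \to \mathrm{III}^2(G_K, A) \to 0,
\]
where $\mathrm{III}^2(G_K, A) := \ker\bigl(H^2(G_K, A) \to \bigoplus_v H^2(G_{K_v}, A)\bigr)$. This is produced by splicing together the middle segments of the Poitou--Tate nine-term sequence, using local Tate duality at each place to identify $(H^1(G_{K_v}, A)/\mathcal{L}_v)^\vee$ with $\mathcal{L}_v^*$ (and hence $|\mathcal{L}_v| \cdot |\mathcal{L}_v^*| = |H^1(G_{K_v}, A)|$), together with the Poitou--Tate duality $\mathrm{III}^1(G_K, A) \cong \mathrm{III}^2(G_K, A^*)^\vee$ to identify the kernel of the last map. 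Taking alternating products of cardinalities then yields
\[
\frac{|\textup{Sel}(G_K, A, \{\mathcal{L}_v\})|}{|\textup{Sel}(G_K, A^*, \{\mathcal{L}_v^*\})|} = \frac{|H^1(G_K, A)|}{|\mathrm{III}^2(G_K, A)|} \cdot \prod_v \frac{|\mathcal{L}_v|}{|H^1(G_{K_v}, A)|}.
\]

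To bring this into the stated form, I would invoke Tate's global Euler--Poincar\'e characteristic formula to express $|H^1(G_K, A)|$ in terms of $|H^0(G_K, A)|$, $|H^2(G_K, A)|$, and archimedean local factors, combine this with the local Euler characteristic formula at each non-archimedean place (together with the local Tate identification $|H^2(G_{K_v}, A)| = |(A^*)^{G_{K_v}}|$), and use a further portion of the Poitou--Tate exact sequence to replace $|H^2(G_K, A)|/|\mathrm{III}^2(G_K, A)|$ by an expression involving $|H^0(G_K, A^*)|$ and local $|(A^*)^{G_{K_v}}|$ terms. After the intermediate factors cancel, what remains is exactly the right-hand side of the theorem.

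The main obstacle will be the careful bookkeeping of cardinalities: verifying exactness at each node of the spliced Cassels--Poitou--Tate sequence, and handling the archimedean places correctly, where $H^i(G_{K_v},A)$ should be interpreted as modified (Tate) cohomology and the local Euler characteristic contributes a different normalization. All the local factors must slot together so that the archimedean correction in the global Euler characteristic formula exactly matches the local contributions at infinity, leaving the clean ratio $|A^{G_K}|/|(A^*)^{G_K}| \cdot \prod_v |\mathcal{L}_v|/|A^{G_{K_v}}|$.
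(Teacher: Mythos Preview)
The paper does not actually prove this theorem: it is stated as \Cref{thm:wiles_greenberg} with citations to Wiles \cite[Proposition 1.6]{wiles1995modular}, Mazur--Rubin \cite[Proposition 2.35]{Mazur_Rubin}, and Washington \cite[Theorem 2]{washington}, and then used as a black box. Your proposal outlines the standard derivation via the Poitou--Tate nine-term exact sequence combined with local Tate duality and the global Euler--Poincar\'e characteristic formula, which is exactly the route taken in those references (see in particular Washington's exposition), so there is no discrepancy to report.
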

 
Now let $n$ be a squarefree integer. We  apply \Cref{thm:wiles_greenberg} to give a formula for the dimension of $\textup{res}^{-1}\left(\textup{Sel}_{\chi_n}(G_K,\mathbb{Z}/2\mathbb{Z})\right)$ assuming that the dual Selmer group $Y_n$ is trivial. In what follows, for an integer $n$ write $\omega_{\textup{inert}}(n)$ for the number of distinct prime factors of $n$ which are inert in $K/\mathbb{Q}$. When $n$ is squarefree write $\Delta_n$ for the discriminant of $\mathbb{Q}(\sqrt{n})$ (thus $\Delta_n \in \{n,4n\}$).
 
\begin{proposition} 
\label{prop:formula_for_sel_n}
Let $K$ be a quadratic extension of $\mathbb{Q}$ with discriminant $\Delta$, and let $n$ be a squarefree integer. Assume that both of the groups $X_n$ and $Y_n$ are trivial. Then we have 
\[
\textup{dim}_{\mathbb{F}_2} \textup{Sel}_{\chi_n}(G_K,\mathbb{Z}/2\mathbb{Z})=\omega(\Delta)-1+\omega_{\textup{inert}}(\Delta_n)~-~\begin{cases}1~~&~~K/\mathbb{Q}\textup{ real}, \\ 0~~&~~\textup{otherwise}.
\end{cases}
\]
\end{proposition}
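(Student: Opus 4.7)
The plan is to apply the Greenberg--Wiles formula (Theorem \ref{thm:wiles_greenberg}) to $\textup{res}^{-1}(\textup{Sel}_{\chi_n}(G_K,\mathbb{Z}/2\mathbb{Z}))$, viewed as a Selmer group over $\mathbb{Q}$ with local conditions $\mathcal{L}_{v,n}'$ as made explicit in Lemma \ref{lem:explicit_L_n_prime_conditions}, and then convert the result back to $\textup{Sel}_{\chi_n}(G_K,\mathbb{Z}/2\mathbb{Z})$ via the exact sequence (\ref{eq:selmer_exact_seq}).

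First I would use the assumption $X_n=0$, combined with Lemma \ref{lem:contained_in_cor}, to deduce that $\textup{cores}(\textup{Sel}_{\chi_n}(G_K,\mathbb{Z}/2\mathbb{Z}))=0$. Since $\chi_{K/\mathbb{Q}}$ is a nonzero element of $\textup{res}^{-1}(\textup{Sel}_{\chi_n}(G_K,\mathbb{Z}/2\mathbb{Z}))$, exactness of (\ref{eq:selmer_exact_seq}) then gives
\[
\dim_{\mathbb{F}_2} \textup{Sel}_{\chi_n}(G_K,\mathbb{Z}/2\mathbb{Z}) \;=\; \dim_{\mathbb{F}_2} \textup{res}^{-1}(\textup{Sel}_{\chi_n}(G_K,\mathbb{Z}/2\mathbb{Z})) - 1.
\]
Applying Theorem \ref{thm:wiles_greenberg} to the Selmer group on the right with $A=A^{\ast}=\mathbb{Z}/2\mathbb{Z}$ (trivial action), and using that $Y_n=0$ by assumption, the global contributions on the two sides of the formula cancel and one is left with
\[
\dim_{\mathbb{F}_2} \textup{res}^{-1}(\textup{Sel}_{\chi_n}(G_K,\mathbb{Z}/2\mathbb{Z})) \;=\; \sum_v \bigl(\dim_{\mathbb{F}_2} \mathcal{L}_{v,n}' - 1\bigr).
\]

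It then remains to evaluate the contributions $\dim_{\mathbb{F}_2} \mathcal{L}_{v,n}'-1$ place by place. The archimedean place contributes $-1$ or $0$ according to whether $K/\mathbb{Q}$ is real or imaginary, directly from Lemma \ref{lem:explicit_L_n_prime_conditions}. At a finite place $v$ I would case on the behaviour of $v$ in $K$. If $v$ splits then $\chi_{K/\mathbb{Q}}$ is trivial at $v$, and inspection of the two applicable regimes of Lemma \ref{lem:explicit_L_n_prime_conditions} shows $\dim \mathcal{L}_{v,n}'=1$ in each, i.e.\ contribution $0$. If $v$ is ramified in $K$, then in both regimes of the lemma one has $\dim \mathcal{L}_{v,n}'=2$: in the regime $n\in K_w^{\ast 2}$ because $\chi_{K/\mathbb{Q}}\notin H^1_{\textup{ur}}$, and in the regime $n\notin K_w^{\ast 2}$ because $\chi_n$ and $\chi_{K/\mathbb{Q}}$ are then distinct nontrivial characters of $G_{\mathbb{Q}_v}$; the total ramified contribution is thus $\omega(\Delta)$. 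If $v$ is inert in $K$, a similar check shows the contribution is $1$ when $n\notin K_w^{\ast 2}$ and $0$ otherwise.

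The main technical step is then to match the inert-place contributions with $\omega_{\textup{inert}}(\Delta_n)$. For odd inert $v\nmid n$, Hensel's lemma together with the fact that $\mathbb{F}_p^{\ast}\subseteq \mathbb{F}_{p^2}^{\ast 2}$ gives $n\in K_w^{\ast 2}$; for odd inert $v\mid n$, the element $n$ has odd valuation in $K_w$ and so does not lie in $K_w^{\ast 2}$. The most delicate sub-case is $v=2$ inert in $K$, where $K_w$ is the unramified quadratic extension $\mathbb{Q}_2(\sqrt{5})$; here a direct computation of the image of $\mathbb{Q}_2^{\ast}/\mathbb{Q}_2^{\ast 2}$ in $K_w^{\ast}/K_w^{\ast 2}$ shows that $n\notin K_w^{\ast 2}$ precisely when $n\not\equiv 1\pmod{4}$, which is equivalent to $2\mid \Delta_n$. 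Summing these contributions gives $\omega_{\textup{inert}}(\Delta_n)$ from the inert places, and combining with the above yields the formula in the proposition.
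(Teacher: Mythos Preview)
Your proposal is correct and follows essentially the same approach as the paper: reduce to $\textup{res}^{-1}(\textup{Sel}_{\chi_n})$ via the exact sequence \eqref{eq:selmer_exact_seq} using $X_n=0$, apply Greenberg--Wiles with $Y_n=0$, and read off the local dimensions from Lemma~\ref{lem:explicit_L_n_prime_conditions}. The paper's proof is terser, simply asserting the clean formula for $\dim_{\mathbb{F}_2}\mathcal{L}_{v,n}'$ and leaving the translation from the condition ``$n\notin K_w^{\ast 2}$ at an inert $v$'' to ``$v\mid \Delta_n$'' (in particular the $2$-adic case) to the reader; your case analysis makes this step explicit.
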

 
\begin{proof}
By \Cref{lem:contained_in_cor}, the assumption that $X_n$ is trivial gives $\textup{cores}(\textup{Sel}_{\chi_n}(G_K, \Z/2\Z))=0$. Taking dimensions  in the exact sequence \eqref{eq:selmer_exact_seq} then gives
\[
\dim_{\mathbb{F}_2}\textup{Sel}_{\chi_n}(G_K,\mathbb{Z}/2\mathbb{Z})=\dim_{\mathbb{F}_2}\textup{res}^{-1}\left(\textup{Sel}_{\chi_n}(G_K,\mathbb{Z}/2\mathbb{Z})\right)-1.
\]
Since we have assumed that $Y_n$ is trivial, \Cref{thm:wiles_greenberg} then gives 
\begin{equation} \label{eq:wiles_greenberg_applied}
\dim_{\mathbb{F}_2}\textup{res}^{-1}\left(\textup{Sel}_{\chi_n}(G_K,\mathbb{Z}/2\mathbb{Z})\right)=\sum_{v~\textup{place of }\mathbb{Q}}(\dim_{\mathbb{F}_2}\mathcal{L}_{v,n}'-1).
\end{equation}
From the explicit description of the subspaces $\mathcal{L}_{v,n}'$ afforded by \Cref{lem:explicit_L_n_prime_conditions} we see that, for any place $v$ of $\mathbb{Q}$, we have 
\[
\dim_{\mathbb{F}_2}\mathcal{L}_{v,n}'=\begin{cases} 0~~&~~ v=\infty~\textup{and }  K/\mathbb{Q}~\textup{ real},\\
 2~~&~~ v\neq \infty \textup{ and }\textup{either }v\mid \Delta, ~\textup{or } v\textup{ inert in }K/\mathbb{Q}\textup{ and }v\mid \Delta_n,\\
 1~~&~~\textup{otherwise}. 
\end{cases}
\]
Substituting this into \eqref{eq:wiles_greenberg_applied} gives the result. 
\end{proof}

\section{\texorpdfstring{A distribution on the sizes of $X_n$ and $Y_n$}{A distribution on the sizes of Xn and Yn}} \label{sec:preparation of sum}
In this section we show that for $100\%$ of the squarefree $n$, the groups $X_n$ and $Y_n$ defined in the previous section consist only of the trivial character. This is a corollary of the following theorem:

\begin{theorem}
\label{thm:mainanalytic}
We have
\[
\sum_{|n| \leq X \rm{,\, sq.\, f.}} |X_n| = \frac{2}{\zeta(2)} X + O(X \log(X)^{-1/8})
\]
and the same holds when replacing $X_n$ by $Y_n$.
\end{theorem}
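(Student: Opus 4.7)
The plan is to isolate the contribution of the trivial character $\chi_1$---which always lies in $X_n$ and in $Y_n$---and then show that the excess is small on average. Writing
\[
\sum_{\substack{|n| \leq X \\ n \text{ sq.\ f.}}} |X_n| \;=\; \sum_{\substack{|n| \leq X \\ n \text{ sq.\ f.}}} 1 \;+\; \sum_{\substack{d \text{ sq.\ f.} \\ d \neq 1}} \#\bigl\{ |n| \leq X : n \text{ sq.\ f.},\, \chi_d \in X_n \bigr\},
\]
the first sum equals $\tfrac{2X}{\zeta(2)} + O(\sqrt{X})$ and supplies the main term. It thus suffices to show that the double sum on the right is $O(X \log(X)^{-1/8})$; the same argument applies to $Y_n$ by invoking Theorem \ref{tDualLocal} in place of Theorem \ref{tLocal}.

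For the excess sum I would use Theorem \ref{tLocal} to parametrize contributing pairs $(d,n)$. The first bullet forces $d = \pm 2^{\varepsilon} d_0 m$, where $\varepsilon \in \{0,1\}$, the sign and $d_0$ (a squarefree product of odd primes dividing $\Delta$) range over a fixed finite set, and $m$ is a squarefree product of odd primes that divide $n$ and split in $K/\mathbb{Q}$. Writing $n = m n'$ with $\gcd(n',\, 2\Delta m) = 1$, the remaining bullets translate---via the product formula for Hilbert symbols and quadratic reciprocity---into a product of Jacobi symbol conditions at inert primes $p \mid n'$ and at split primes $p \mid m$, together with finitely many congruence conditions on $n$ modulo a fixed power of $8\Delta$. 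After interchanging the order of summation and expanding the indicator function of these conditions using characters, the excess reduces to bilinear sums of the shape
\[
\sum_{m \text{ sq.\ f.}} \lambda_m \sum_{\substack{|n'| \leq X/m \\ n' \text{ sq.\ f.}, \, (n',\, 2\Delta m) = 1}} \mu(n')^2 \biggl( \frac{a(d_0, m)}{n'} \biggr),
\]
where $\lambda_m$ is a bounded coefficient and $a(d_0, m)$ is a squarefree integer determined by $d_0$ and $m$.

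The crux is to bound these bilinear sums. Following the approach of Heath--Brown \cite{HB} as refined by Fouvry--Kl\"uners \cite{FK}, I would split the $m$-range at a threshold tuned to a power of $\log X$. For small $m$, quadratic reciprocity flips the inner Jacobi symbol into $\bigl( \tfrac{n'}{a(d_0,m)} \bigr)$ up to controlled phases, after which one obtains cancellation in the $n'$-sum by combining P\'olya--Vinogradov with M\"obius inversion to remove the squarefree condition. For moderate $m$, the same argument runs in the dual direction. For the balanced range $m \approx \sqrt{X}$, where neither variable dominates, one must extract simultaneous cancellation in both $m$ and $n'$; this is handled by the cuboid decomposition that is presumably the subject of Section \ref{sec:cuboids}. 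Summing the resulting bounds over the finitely many choices of sign, $\varepsilon$, and $d_0$ then yields the claimed error $O(X \log(X)^{-1/8})$.

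The hardest step is the balanced range above: a direct application of P\'olya--Vinogradov in either variable does not beat the trivial bound when $m$ is close to $\sqrt{X}$, and additional oscillation must be extracted via a multi-dimensional large-sieve-style argument, with the specific exponent $1/8$ coming out of the optimization of the ranges. The proof for $Y_n$ is essentially identical, since Theorem \ref{tDualLocal} differs from Theorem \ref{tLocal} only by replacing $(d,-n)_p$ with $(d,n)_p$, which affects only a sign inside the auxiliary Jacobi symbols and leaves the analytic structure unchanged.
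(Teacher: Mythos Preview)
Your overall strategy---separate the trivial character, then control the excess via the local conditions of Theorems~\ref{tLocal} and~\ref{tDualLocal} expanded as character sums---matches the paper. However, the displayed reduction to a bilinear sum $\sum_m \lambda_m \sum_{n'} \mu(n')^2 \bigl(\tfrac{a(d_0,m)}{n'}\bigr)$ with $\lambda_m$ a \emph{bounded} coefficient is not achievable. Expanding the three indicator products (for the splitting condition on primes of $d$, for the Hilbert-symbol condition at $p\mid m$, and for the Legendre condition at $p\mid n'$) introduces three independent divisor sums whose Jacobi symbols couple the variables: the condition at $p\mid m$ involves $n'$ in the numerator, and the condition at $p\mid n'$ involves $m$ in the numerator. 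After the substitutions the paper makes, one lands not on a two-variable sum but on the six-variable sum \eqref{eq:yz} in pairwise coprime $y_1,\dots,y_6$, with weights $4^{-\omega(y_1y_2y_3y_4)}2^{-\omega(y_5y_6)}$ and several Jacobi symbols linking different pairs $(y_i,y_j)$. No single ``$m$ vs.\ $n'$'' dichotomy captures this.

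Consequently the trichotomy ``small $m$ / moderate $m$ / balanced $m\approx\sqrt{X}$'' and the appeal to P\'olya--Vinogradov do not match what is actually needed. The paper's cuboid decomposition tracks the sizes of all six $y_i$ simultaneously and uses three distinct tools according to the size profile: a Mertens-type bound (Lemma~\ref{lem:smallcuboids}) when too many variables are at most medium, which is where the $\log(X)^{-1/8}$ saving genuinely originates; Heath--Brown's double-oscillation estimate (Theorem~\ref{thm:doubleoscillation}, Lemma~\ref{lem:yiyjlarge}) when a linked pair $(y_i,y_j)$ appearing as $\bigl(\tfrac{y_i}{y_j}\bigr)$ are both at least medium; and a Siegel--Walfisz variant with the weight $k^{-\omega(n)}$ (Theorem~\ref{thm:siegelwalfisz}, Lemmas~\ref{lem:Y1Y4large}--\ref{lem:Y5Y6large}) when one variable is large and its partner small. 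P\'olya--Vinogradov alone would not handle the weighted sums $\sum \mu^2(nr)k^{-\omega(n)}\chi(n)$ that arise, and the exponent $1/8$ is not an artefact of range optimisation in a balanced regime but comes from the interplay of the $(3/4)^{\omega}$ and $2^{-\omega}$ weights in the Mertens step.
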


Both $X_n$ and $Y_n$ always contain the trivial character. This implies that 
\[
\sum_{|n| \leq X \rm{,\, sq.\, f.}} |X_n| \geq \sum_{|n| \leq X \rm{,\, sq.\, f.}} 1 = \frac{2}{\zeta(2)} X + O(\sqrt{X}),
\]
and the same holds for $Y_n$.

We begin with our ``toolbox'' of analytic theorems, after which we will show that the sum of the sizes of the $X_n$ can be bounded from above by a sum of products of Jacobi symbols (with weights and squarefree indicators). We divide the ranges of the variables of this sum into cuboids, i.e.\ products of intervals. Subsequently we show that most cuboids make a negligible contribution (i.e.\ $O(X \log (X)^{-1/8})$) to the total sum. 

\subsection{Toolbox}
Our approach rests on a triplet of theorems, each of which deals with a different type of cuboid. \cref{thm:mertens} can be used to show that cuboids with many small edges (i.e.\ intervals) make a negligible contribution. \cref{thm:siegelwalfisz} can be applied to show that cuboids with a large edge and specific other small edges are negligible, and lastly \cref{thm:doubleoscillation} deals with the cuboids with many large edges.

\begin{theorem}[Mertens] 
\label{thm:mertens}
Let $\kappa > 0$ be fixed. Uniformly for $x \geq 1$ the following bounds hold true:
\begin{align*}
&\sum_{1 \leq n \leq x} \mu^2(n)\kappa^{\omega(n)} \ll x \log(x)^{\kappa - 1} \\
&\sum_{1 \leq n \leq x} \frac{\mu^2(n)\kappa^{\omega(n)}}{n} \ll \log(x)^{\kappa}.
\end{align*}
\end{theorem}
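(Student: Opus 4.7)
The plan is to exploit the multiplicativity of the function $f(n) := \mu^2(n)\kappa^{\omega(n)}$, which vanishes on non-squarefree integers and equals $\kappa$ on primes. Its Dirichlet series factors as
\[
\sum_{n = 1}^{\infty}\frac{f(n)}{n^s} \;=\; \prod_p\left(1+\frac{\kappa}{p^s}\right) \;=\; \zeta(s)^{\kappa}\,H(s),
\]
where $H(s) = \prod_p (1+\kappa/p^s)(1-1/p^s)^{\kappa}$ has local factors of the shape $1 + O(p^{-2s})$ and hence defines an absolutely convergent, holomorphic function on $\textup{Re}(s) > 1/2$. This factorization is the conceptual reason one expects a saving of a $\log x$ factor in the first estimate relative to the second.

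For the second estimate I would bound termwise by
\[
\sum_{1 \le n \le x}\frac{\mu^2(n)\kappa^{\omega(n)}}{n} \;\le\; \prod_{p \le x}\left(1+\frac{\kappa}{p}\right),
\]
since expanding the right-hand side yields a sum over all squarefree integers whose prime factors are at most $x$. Taking logarithms and applying the classical Mertens estimate $\sum_{p \le x} 1/p = \log\log x + O(1)$ then gives $\prod_{p \le x}(1+\kappa/p) \ll (\log x)^{\kappa}$, as required.

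For the first estimate, a naive partial summation from the second bound only delivers $x(\log x)^{\kappa}$ and loses the crucial factor of $\log x$. To recover it, my plan is to invoke Shiu's upper bound for mean values of non-negative multiplicative functions: if $f \ge 0$ is multiplicative with $f(p^a) \le A^a$ and $f(n) \ll_\varepsilon n^{\varepsilon}$ for every $\varepsilon > 0$, then uniformly in $x \ge 2$,
\[
\sum_{n \le x} f(n) \;\ll\; \frac{x}{\log x}\,\exp\!\left(\sum_{p \le x}\frac{f(p)}{p}\right).
\]
Both hypotheses are trivially satisfied for our $f$, since $f(p^a) = 0$ for $a \ge 2$ and $\kappa^{\omega(n)} = o(n^\varepsilon)$ for every $\varepsilon > 0$; the exponential factor is controlled by Mertens to give the stated bound $\sum_{n \le x}\mu^2(n)\kappa^{\omega(n)} \ll x(\log x)^{\kappa - 1}$ immediately.

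The main obstacle is precisely this additional $\log x$ saving in the first bound: it is not accessible from the second bound by elementary partial summation, but is the standard content of Shiu--Wirsing--Hal\'asz type theorems on mean values of non-negative multiplicative functions dominated by $\kappa^{\omega(n)}$. As an alternative (and sanity check), one could apply Selberg--Delange contour integration against the factorization $\zeta(s)^{\kappa}H(s)$ to obtain the sharp asymptotic $\sum_{n \le x} f(n) \sim c_\kappa\, x (\log x)^{\kappa - 1}$; but for an upper bound alone, Shiu's theorem is the cleanest tool.
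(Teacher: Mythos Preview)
Your proof is correct and follows essentially the same route as the paper: the second bound via the Euler product and Mertens' estimate is identical to the paper's argument, and for the first bound the paper invokes \cite[Theorem~2.14]{Mo--Va} (which, under mild hypotheses on a non-negative multiplicative $f$, gives $\sum_{n\le x} f(n) \ll \frac{x}{\log x}\sum_{n\le x} f(n)/n$) and then feeds in the already-established second bound, whereas you quote Shiu's theorem directly. The two black boxes are interchangeable here, so there is no substantive difference.
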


\begin{proof}
The second inequality follows from
\[
\sum_{1 \leq n \leq x} \frac{\mu^2(n)\kappa^{\omega(n)}}{n} \leq \prod_{p \leq x} \left(1 + \frac{\kappa}{p}\right) \leq \prod_{p \leq x} \left(1 + \frac{1}{p}\right)^\kappa
\]
and Theorem \ref{tMertens} with $L = K = \Q$. The first inequality is a consequence of \cite[Theorem 2.14]{Mo--Va} and the second inequality.
\end{proof}

\begin{theorem}[Double oscillation] 
\label{thm:doubleoscillation}
Let $f(m)$ and $g(n)$ be complex squences of modulus at most one. Then for every $M, N \geq 2$ and every $\epsilon > 0$ we have
\[
\sum_{1 \leq m \leq M} \sum_{1\leq n \leq N} \mu^2(2m)\mu^2(2n) f(m)g(n) \left( \frac{m}{n} \right) \ll_\epsilon MN(M^{-1/2 + \epsilon} + N^{-1/2 + \epsilon}).
\]
\end{theorem}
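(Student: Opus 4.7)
The plan is to reduce to Heath--Brown's large sieve for real characters by separating the two variables via Cauchy--Schwarz. Let $S$ denote the double sum on the left. Since $|f(m)| \leq 1$ and the number of squarefree odd $m \leq M$ is $O(M)$, Cauchy--Schwarz applied to the $m$-variable gives
\begin{equation*}
|S|^2 \;\ll\; M \cdot T, \qquad T \;:=\; \sum_{m \leq M} \mu^2(2m)\left|\sum_{n \leq N} \mu^2(2n) g(n) \left(\frac{m}{n}\right)\right|^2.
\end{equation*}

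The inner quantity $T$ is precisely of the form controlled by the large sieve inequality for real Dirichlet characters due to Heath--Brown, which yields
\begin{equation*}
T \;\ll_\epsilon\; (MN)^\epsilon (M+N) \sum_{n \leq N} \mu^2(2n) |g(n)|^2 \;\ll_\epsilon\; (MN)^\epsilon (M+N) N,
\end{equation*}
using $|g(n)| \leq 1$. A mild subtlety is that Heath--Brown's usual formulation involves the Jacobi symbol with variables reversed, $\left(\frac{n}{m}\right)$; however, since the weights $\mu^2(2m)\mu^2(2n)$ force $m,n$ to be odd squarefree positives, the quadratic reciprocity correction factor $(-1)^{(m-1)(n-1)/4}$ depends only on the residues of $m$ and $n$ modulo $4$, and can therefore be absorbed into $g(n)$ by splitting into residue classes without affecting either $|g(n)| \leq 1$ or the $\ell^2$-bound.

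Combining the two displays gives $|S|^2 \ll_\epsilon MN(M+N)(MN)^\epsilon$, and using $(M+N)^{1/2} \leq M^{1/2}+N^{1/2}$ we conclude
\begin{equation*}
|S| \;\ll_\epsilon\; (MN)^{1/2+\epsilon/2}\bigl(M^{1/2}+N^{1/2}\bigr) \;=\; MN\bigl(M^{-1/2+\epsilon/2} + N^{-1/2+\epsilon/2}\bigr),
\end{equation*}
which is the claimed estimate after relabelling $\epsilon$. The main non-routine input is clearly Heath--Brown's quadratic large sieve itself, a deep result built on an application of the analytic large sieve to a carefully chosen family of real Dirichlet characters. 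If one instead tried to proceed by expanding the square in $T$ and bounding the resulting inner character sum over $m$ directly via quadratic reciprocity plus P\'olya--Vinogradov (the diagonal $n_1 = n_2$, which by squarefreeness is the only case where $\left(\frac{m}{n_1 n_2}\right)$ becomes principal, producing the expected main term of size $\ll MN$), one loses roughly a factor of $N^{1/2}$ in the regime $N \gg M$, so invoking the sieve is essential for the sharp exponent.
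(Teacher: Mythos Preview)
Your strategy---Cauchy--Schwarz in one variable followed by Heath--Brown's quadratic large sieve---is indeed the standard route, and is how the result the paper cites (\cite[Lemma~15]{FK}) is obtained from \cite{HB3}; the reciprocity remark is also handled correctly. The paper itself gives no argument beyond that citation.

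There is, however, an error in your final display. Expanding both sides gives
\[
(MN)^{1/2+\epsilon/2}\bigl(M^{1/2}+N^{1/2}\bigr)=M^{1+\epsilon/2}N^{1/2+\epsilon/2}+M^{1/2+\epsilon/2}N^{1+\epsilon/2},
\]
while
\[
MN\bigl(M^{-1/2+\epsilon/2}+N^{-1/2+\epsilon/2}\bigr)=M^{1/2+\epsilon/2}N+MN^{1/2+\epsilon/2};
\]
these coincide at $\epsilon=0$ but for $\epsilon>0$ the first exceeds the second by a factor of order $(MN)^{\epsilon/2}$. So what your argument actually proves is
\[
|S|\ \ll_\epsilon\ MN\bigl(M^{-1/2}+N^{-1/2}\bigr)(MN)^{\epsilon},
\]
and this does \emph{not} imply the stated bound after relabelling: with $M$ fixed and $N\to\infty$ your estimate is of order $M^{1/2}N^{1+\epsilon}$, whereas the theorem asserts $O_\epsilon(M^{1/2+\epsilon}N)$. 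The discrepancy is not cosmetic for the paper's purposes: in the application (\cref{lem:yiyjlarge}) the spurious factor $(Y_iY_j)^{\epsilon}$ can be as large as $X^{\epsilon}$, which overwhelms any power of $\log X$ and breaks the argument. The sharper form stated in the theorem is the one recorded in \cite[Lemma~15]{FK}, deduced there from \cite[Corollary~4]{HB3} rather than from the large-sieve Theorem~1 plus naive Cauchy--Schwarz.
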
 

\begin{proof}
See \cite[Lemma 15]{FK} which deduces this from work of Heath--Brown \cite[Corollary 4]{HB3}.
\end{proof}

\begin{theorem}[Siegel-Walfisz] 
\label{thm:siegelwalfisz}
For any $q \geq 2$, any primitive character $\chi$ {\rm{mod}} $q$, any positive integers $k$, $r$ and any $A > 0$ we have
\[
\sum_{y \leq n \leq x} \frac{\mu^2(nr)}{k^{\omega(n)}} \chi(n) = O_{A, k}(\sqrt{q} x \log(x)^{-A} 2^{\omega(r)})
\]
for $x \geq y \geq 2$ uniformly.  
\end{theorem}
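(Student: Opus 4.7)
The plan is to dichotomise on the size of $q$: for $q \geq \log(x)^{2A}$ a trivial bound suffices, while for $q < \log(x)^{2A}$ I apply the Selberg--Delange contour method to the Dirichlet series of $\mu^2(nr)\chi(n)/k^{\omega(n)}$, using the nontriviality of $\chi$ and a standard Siegel-type zero-free region for $L(s,\chi)$. In the first regime, bounding $|\chi(n)|\leq 1$ and $\mu^2(nr)\leq 1$ and applying Mertens (\Cref{thm:mertens}) gives
\[
\Bigl|\sum_{y \leq n \leq x} \frac{\mu^2(nr)\chi(n)}{k^{\omega(n)}}\Bigr| \;\leq\; \sum_{n \leq x}\frac{\mu^2(n)}{k^{\omega(n)}} \;\ll_k\; x\log(x)^{1/k-1} \;\ll\; x,
\]
which is dominated by $\sqrt{q}\,x\log(x)^{-A}2^{\omega(r)}$ when $q\geq\log(x)^{2A}$. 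So assume $q<\log(x)^{2A}$.

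The analytic input is the Dirichlet series factorisation, valid for $\Re(s)>1$,
\[
F_{r,\chi}(s) \;:=\; \sum_{n\geq 1}\frac{\mu^2(nr)\chi(n)}{k^{\omega(n)}n^s} \;=\; \mu^2(r)\prod_{p\nmid r}\Bigl(1+\tfrac{\chi(p)}{kp^s}\Bigr) \;=\; L(s,\chi)^{1/k}\,G_{r,\chi}(s),
\]
where
\[
G_{r,\chi}(s) \;=\; \mu^2(r)\prod_p\Bigl(1+\tfrac{\chi(p)}{kp^s}\Bigr)\Bigl(1-\tfrac{\chi(p)}{p^s}\Bigr)^{1/k}\cdot\prod_{p\mid r}\Bigl(1+\tfrac{\chi(p)}{kp^s}\Bigr)^{-1}.
\]
A local expansion shows that the Euler factors of the first product equal $1+O_k(p^{-2s})$, so this product converges absolutely in $\Re(s)>1/2$ and is analytic there; the second product contributes at most a $k$-dependent constant to the power $\omega(r)$. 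Hence $|G_{r,\chi}(s)|\ll_k 2^{\omega(r)}$ uniformly on $\Re(s)\geq 1/2+\delta$, for any fixed $\delta>0$.

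Apply truncated Perron to the partial sums $\sum_{n\leq x}$ and $\sum_{n\leq y}$ separately, then subtract. Shift each contour past $\Re(s)=1$ via a Hankel loop around the branch point at $s=1$ of $L(s,\chi)^{1/k}$, as in the Selberg--Delange method. Since $\chi$ is primitive and nontrivial, $L(1,\chi)\neq 0$; the classical zero-free region for $L(s,\chi)$ together with Siegel's theorem (applied ineffectively in $A$) gives a saving of $\log(x)^{-A}$ in the shifted integral, while the uniform bound $|G_{r,\chi}|\ll_k 2^{\omega(r)}$ yields the $2^{\omega(r)}$ factor; any polynomial factors in $q$ arising from convexity bounds on $L^{1/k}$ are absorbed into the $\sqrt{q}$ of the statement. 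The principal technical obstacle is the analytic handling of the fractional power $L(s,\chi)^{1/k}$ via the Hankel contour, together with the uniform bookkeeping of the Perron-truncation errors, the vertical integrals on the shifted contour, and the Hankel loop itself. This is standard Selberg--Delange machinery, but must be carried out with enough care to produce the clean stated bound uniformly in $x$, $y$, $q$, and $r$.
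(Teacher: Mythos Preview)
The paper does not give its own proof here; it simply cites \cite[Lemma~5.1]{FKP} for the case $k=4$, $\chi$ quadratic, and asserts that the general case is identical. Your approach --- dichotomise on the size of $q$, and for $q<(\log x)^{2A}$ factor the Dirichlet series as $L(s,\chi)^{1/k}G_{r,\chi}(s)$ with $G$ holomorphic and bounded by $O_k(2^{\omega(r)})$ in a half-plane, then shift a truncated Perron contour into the zero-free region using Siegel's theorem --- is the standard one and is almost certainly what the cited reference does.

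There is, however, one genuine confusion in your sketch. You speak of a ``Hankel loop around the branch point at $s=1$ of $L(s,\chi)^{1/k}$'', but since $\chi$ is primitive and nontrivial, $L(s,\chi)$ is \emph{entire} and (as you yourself note) nonzero at $s=1$; hence $L(s,\chi)^{1/k}$ is holomorphic in a neighbourhood of $s=1$ and there is no branch point there. No Hankel contour is needed: one simply shifts the vertical Perron line leftward to the boundary of the classical zero-free region $\sigma=1-c/\log(q(|t|+2))$ (with Siegel's theorem disposing of a possible exceptional real zero, ineffectively in $A$), encountering no singularities at all. The saving $x\exp(-c\sqrt{\log x})\ll_A x(\log x)^{-A}$ then comes purely from the factor $x^\sigma$ on the shifted contour together with standard bounds on $|L(s,\chi)^{1/k}|$ there. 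The Hankel loop you describe belongs to the Selberg--Delange analysis of the \emph{principal} character case, where one extracts a main term from the branch singularity induced by the pole of $\zeta(s)$ at $s=1$; that is precisely the opposite of the situation here. Your ingredients are correct and the argument goes through once this extraneous piece is removed, but as written the description conflates two different regimes of the method.
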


\begin{proof}
See \cite[Lemma~5.1]{FKP} for the case $k = 4$ and $\chi$ quadratic. The proof for the general case follows along the same lines.
\end{proof}

\subsection{Preparation of the sum}
Using \cref{tLocal} and \cref{tDualLocal} we can formulate an upper bound for our sum, culminating with equation \eqref{eq:yz}. Recall \cref{tLocal} and \cref{tDualLocal}:

\begin{lemma}
Any squarefree $d$ such that $\chi_d \in X_n$ resp. $\chi_d \in Y_n$ satisfies the following conditions:
\begin{enumerate}
\item if $p \mid d$ and $p \nmid 2\Delta$, then $p \mid n$ and $p$ splits in $K/\Q$;
\item if $p \mid n$, $p \nmid 2\Delta$ and $p$ splits in $K/\Q$, then $(d, -n)_p = 1$ resp. $(d, n)_p = 1$;
\item if $p \mid n$, $p \nmid 2\Delta$ and $p$ is inert in $K/\Q$, then $\left( \frac{d}{p} \right) = 1$.
\end{enumerate}
\end{lemma}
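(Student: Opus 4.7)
The plan is to observe that this lemma is essentially a direct repackaging of \Cref{tLocal} and \Cref{tDualLocal}: the three enumerated conditions here correspond bullet-by-bullet to the three bullets of those theorems, differing only in cosmetic phrasing. So I would simply invoke each theorem in turn and verify the translation.

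For the $X_n$ half, I would apply \Cref{tLocal}. Condition (1) is exactly the contrapositive of the first bullet there, with the restriction ``$p$ odd and $p \nmid \Delta$'' rewritten as the single hypothesis $p \nmid 2\Delta$. Conditions (2) and (3) are then verbatim restatements of the second and third bullets of \Cref{tLocal}, under the same identification $p \nmid 2\Delta \Leftrightarrow p$ odd and $p \nmid \Delta$. The $Y_n$ half is handled identically, but invoking \Cref{tDualLocal} in place of \Cref{tLocal}; the only substantive difference is that the Hilbert symbol $(d,-n)_p$ is replaced by $(d,n)_p$ in condition (2), reflecting the swap between the Selmer group $\mathrm{res}^{-1}(\textup{Sel}_{\chi_n}(G_K,\Z/2\Z))$ and its dual, and this matches precisely the sole difference between the conclusions of \Cref{tLocal} and \Cref{tDualLocal}.

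There is no genuine obstacle here: all the real work has already been carried out in the proofs of \Cref{tLocal} and \Cref{tDualLocal}, which in turn relied on the explicit descriptions of the local conditions $\mathcal{L}_{w,n}$ and their restriction-pullbacks $\mathcal{L}_{v,n}'$ worked out in \Cref{lem:explicit_L_n_prime_conditions}. The present lemma merely consolidates those conclusions into the single statement that will be most convenient for the analytic arguments of \Cref{thm:mainanalytic}, where the three conditions will be fed into sums of Jacobi symbols over cuboids of variables.
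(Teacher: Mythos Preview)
Your proposal is correct and matches the paper's approach exactly: in the paper this lemma is introduced with the phrase ``Recall \Cref{tLocal} and \Cref{tDualLocal}'' and is given no separate proof, precisely because it is the cosmetic repackaging you describe.
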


We split the second and third condition into two parts: one for the primes dividing $d$ and one for the primes not dividing $d$:

\begin{enumerate}
\item if $p \mid d$ and $p \nmid 2\Delta$, then $p \mid n$ and $p$ splits in $K/\Q$;
\item if $p \mid (d,n)$, $p \nmid 2\Delta$ and $p$ splits in $K/\Q$, then $(d, -n)_p = 1$ resp. $(d, n)_p = 1$;
\item if $p \mid n$, $p \nmid 2d\Delta$ and $p$ splits in $K/\Q$, then $(d, -n)_p = 1$ resp. $(d, n)_p = 1$;
\item if $p \mid (d,n)$, $p \nmid 2\Delta$ and $p$ is inert in $K/\Q$, then $\left( \frac{d}{p} \right) = 1$;
\item if $p \mid n$, $p \nmid 2d\Delta$ and $p$ is inert in $K/\Q$, then $\left( \frac{d}{p} \right) = 1$.
\end{enumerate}

Immediately some observations follow:
\begin{itemize}
\item The first condition implies that $d$ is a divisor of $2 \Delta n$. Moreover, when $p \mid d$ and $p \nmid 2\Delta$, we have $p \mid n$. Hence in the second and fourth condition the assumption $p \mid (d,n)$ is equivalent to $p \mid d$. 
\item If $p \mid d$ and $p \nmid 2\Delta$, then it follows from the first condition that $p$ splits in $K/\Q$. Hence the fourth condition is empty, and we can remove the assumption that $p$ splits in $K/\Q$ from the second condition.
\item If $p \mid n$ and $p \nmid d$, then the Hilbert symbols $(d, -n)_p$ and $(d, n)_p$ are equal to $\left( \frac{d}{p} \right)$, since $p$ divides $n$ exactly once. Therefore we can combine the third and fifth condition into a single condition. Note that $p$ does not ramify in $K/\Q$ as $p \nmid \Delta$. 
\item If $p \mid d$ and $p \mid n$, the Hilbert symbol $(d, -n)_p$ is equal to 
\[
\left( \frac{-1}{p} \right) \left( \frac{d/p}{p} \right)\left( \frac{-n/p}{p} \right) = \left( \frac{dn/p^2}{p} \right) = \left( \frac{d/(d,n) \cdot n/(d,n)}{p} \right),
\]
and
\[
(d, n)_p = \left( \frac{- d/(d,n) \cdot n/(d,n)}{p} \right).
\]
\end{itemize}
This results in the following.
\begin{lemma}
Any squarefree $d$ such that $\chi_d \in X_n$ resp. $\chi_d \in Y_n$ satisfies the following conditions:
\begin{enumerate}
\item $d \mid 2\Delta n$;
\item if $p \mid d$ and $p \nmid 2\Delta$, then $p$ splits in $K/\Q$;
\item if $p \mid d$ and $p \nmid 2\Delta$, then $\left( \frac{d/(d,n) \cdot n/(d,n)}{p} \right) = 1$ resp. $\left( \frac{-d/(d,n) \cdot n/(d,n)}{p} \right) = 1$;
\item if $p \mid n$ and $p \nmid 2d\Delta$, then $\left( \frac{d}{p} \right) = 1$.
\end{enumerate}
\end{lemma}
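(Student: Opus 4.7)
The plan is to derive this reformulation directly from the combined conditions of Theorems \ref{tLocal} and \ref{tDualLocal} via the four bullet-point observations listed just before the statement. All four items follow from elementary manipulations of the Hilbert symbol, so the main work is bookkeeping; there is no deep obstacle.

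First I would establish condition (1). Starting from the first condition of the preceding lemma (every odd prime $p\mid d$ divides $\Delta n$), together with the fact that squarefree $d$ is determined by its prime factorization and a sign, one sees $d$ divides $2\Delta n$ up to sign; absorbing the sign and the possible factor of $2$ into the $2\Delta$ factor yields $d\mid 2\Delta n$. Condition (2) is immediate: it is a direct restatement of the first condition restricted to primes $p\nmid 2\Delta$.

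Next I would split the second and third conditions of the preceding lemma according to whether $p\mid d$ or $p\nmid d$. For $p\mid(d,n)$ with $p\nmid 2\Delta$, condition (2) above forces $p$ to split in $K/\mathbb{Q}$, so the splitting hypothesis in the second bullet of the preceding lemma is automatic, and the inert-case bullet is vacuous. This delivers condition (3) of the target lemma, modulo the Hilbert-symbol computation: since $p\|d$ and $p\|n$, writing $d=p\cdot d/(d,n)\cdot (\text{unit at }p)$ and similarly for $n$, the bilinearity of the Hilbert symbol and the standard formula $(p,u)_p=\bigl(\tfrac{u}{p}\bigr)$ for $u$ a $p$-adic unit gives
\[
(d,-n)_p=\leg{d/(d,n)\cdot n/(d,n)}{p},\qquad (d,n)_p=\leg{-d/(d,n)\cdot n/(d,n)}{p},
\]
which is precisely what condition (3) of the target lemma asserts equals $1$.

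Finally, for condition (4), I would handle the remaining case $p\mid n$ with $p\nmid 2d\Delta$. Since $p\nmid d$, the value $d$ is a $p$-adic unit, so $(d,\pm n)_p=\bigl(\tfrac{d}{p}\bigr)$; this absorbs both the third and fifth conditions of the preceding lemma (whether $p$ splits or is inert in $K/\mathbb{Q}$), giving condition (4) in a single clause. Collecting (1)--(4) completes the proof.
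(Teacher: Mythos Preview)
Your proposal is correct and follows essentially the same route as the paper: the lemma is stated immediately after the four bullet-point observations, and the paper's implicit proof is precisely the derivation you outline (split the Hilbert-symbol conditions according to whether $p\mid d$, note that the inert case with $p\mid d$ is vacuous by condition (2), and simplify the symbols in each remaining case). Your Hilbert-symbol computations match the paper's, and the only cosmetic difference is that the paper writes the intermediate step as $\leg{dn/p^2}{p}$ before passing to $\leg{d/(d,n)\cdot n/(d,n)}{p}$, whereas you go there directly.
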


\begin{corollary}
Write $K = \Q(\sqrt{z})$ for some $z \in \Z$ squarefree. For any squarefree $n$ we have that
\[
|X_n| \leq 
\sum_{\substack{d \mid 2\Delta n \\ \rm{sq. \; f.}}} 
\prod_{\substack{p \mid d \\ p \nmid 2\Delta}} \frac12 (1 + \left( \frac{z}{p} \right)) 
\prod_{\substack{p \mid d \\ p \nmid 2\Delta}} \frac12 (1 + \left( \frac{d/(d,n) \cdot n/(d,n)}{p} \right)) 
\prod_{\substack{p \mid n \\ p \nmid 2d\Delta}} \frac12 (1 + \left( \frac{d}{p} \right)) 
\]
and
\[
|Y_n| \leq 
\sum_{\substack{d \mid 2\Delta n \\ \rm{sq. \; f.}}} 
\prod_{\substack{p \mid d \\ p \nmid 2\Delta}} \frac12 (1 + \left( \frac{z}{p} \right)) 
\prod_{\substack{p \mid d \\ p \nmid 2\Delta}} \frac12 (1 + \left(\frac{-d/(d,n) \cdot n/(d,n)}{p} \right)) 
\prod_{\substack{p \mid n \\ p \nmid 2d\Delta}} \frac12 (1 + \left( \frac{d}{p} \right)).
\]
\end{corollary}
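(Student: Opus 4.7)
The plan is to deduce the corollary as a direct encoding of the four conditions of the preceding lemma into an indicator expression built from Legendre symbols. The basic observation is that for any quantity $\chi \in \{\pm 1\}$, the expression $\tfrac{1}{2}(1 + \chi)$ equals $1$ when $\chi = 1$ and $0$ when $\chi = -1$, so each condition of the form ``$\left(\frac{a}{p}\right) = 1$'' is exactly captured by the factor $\tfrac{1}{2}(1 + \left(\frac{a}{p}\right))$, provided we first verify that $p \nmid a$ so that the Legendre symbol takes values in $\{\pm 1\}$.

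By Kummer theory, the squarefree integers $d$ parametrize the characters $\chi_d \in H^1(G_\mathbb{Q}, \mathbb{Z}/2\mathbb{Z})$, so $|X_n|$ and $|Y_n|$ equal the number of squarefree $d$ with $\chi_d \in X_n$ and $\chi_d \in Y_n$ respectively. Condition (1) of the preceding lemma restricts us to $d \mid 2\Delta n$, giving the outer sum in the claimed bound. It then remains to rewrite conditions (2)--(4) as Legendre-symbol indicators and to check that their arguments are coprime to $p$ throughout the respective products.

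For the first product (splitting of $p$ in $K = \mathbb{Q}(\sqrt{z})$): if $p \nmid 2\Delta$ and $p$ is odd, then the odd parts of $z$ and $\Delta$ coincide, so $p \nmid z$; hence $\left(\frac{z}{p}\right) = \pm 1$, equal to $+1$ exactly when $p$ splits in $K/\mathbb{Q}$. For the second product: since $d$ is squarefree and divides $2\Delta n$, any prime $p \mid d$ with $p \nmid 2\Delta$ must divide $n$, hence $p \mid (d,n)$; squarefreeness of both $d$ and $n$ then forces $p \nmid d/(d,n)$ and $p \nmid n/(d,n)$, so the Legendre symbol in question has argument coprime to $p$ and evaluates in $\{\pm 1\}$. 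For the third product, the range $p \nmid 2d\Delta$ directly guarantees $\left(\frac{d}{p}\right) \in \{\pm 1\}$.

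Multiplying the three indicator factors then produces a function of $d$ that equals $1$ when all of conditions (2)--(4) hold and $0$ otherwise, so summing over squarefree $d \mid 2\Delta n$ dominates the count of $\chi_d \in X_n$ and yields the displayed upper bound. The argument for $|Y_n|$ is identical, the only change being the sign in the argument of the Legendre symbol appearing in the second product, as dictated by condition (3) for $Y_n$. The proof is essentially bookkeeping; the only point that requires any care is the verification above that each Legendre symbol takes values in $\{\pm 1\}$, and there is no real obstacle beyond this.
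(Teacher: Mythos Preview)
Your proof is correct and follows essentially the same approach as the paper: both reduce the corollary to the preceding lemma by encoding each of the conditions (2)--(4) as a Legendre-symbol indicator $\tfrac{1}{2}(1+(\frac{\cdot}{p}))$, with condition (1) supplying the range of the outer sum. Your write-up is in fact slightly more careful than the paper's in explicitly verifying that each Legendre symbol has argument coprime to $p$ (so that the indicators genuinely take values in $\{0,1\}$), but otherwise the arguments are the same.
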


\begin{proof}
By the previous lemma, the size of $X_n$ is bounded from above by the number of $d$ that satisfy the four conditions. By the first condition, it suffices to consider only the $d$ that are divisors of $2\Delta n$. A prime $p$ splits in $K = \Q(\sqrt{z})$ precisely when $\left( \frac{z}{p} \right) = 1$. Suppose that $p$ does not divide $2\Delta$. Then $p$ does not ramify in $K/\Q$, hence if $p$ does not split in $K/\Q$, then $\left( \frac{z}{p} \right) = -1$. This implies that $\frac12 (1 + \left( \frac{z}{p} \right))$ is an indicator function for $p$ splitting in $K/\Q$; it equals $1$ if $p$ splits and $0$ otherwise. Similarly, 
\[
\frac12 (1 + \left( \frac{d/(d,n) \cdot n/(d,n)}{p} \right))
\]
is the indicator for $\left( \frac{d/(d,n) \cdot n/(d,n)}{p} \right) = 1$ and $\frac12 (1 + \left( \frac{d}{p} \right))$ is the indicator for $\left( \frac{d}{p} \right) = 1$. A squarefree $d$ that divides $2\Delta n$ therefore meets the second, third and fourth condition precisely if 
\[
\prod_{\substack{p \mid d \\ p \nmid 2\Delta}} \frac12 (1 + \left( \frac{z}{p} \right)) 
\prod_{\substack{p \mid d \\ p \nmid 2\Delta}} \frac12 (1 + \left( \frac{d/(d,n) \cdot n/(d,n)}{p} \right)) 
\prod_{\substack{p \mid n \\ p \nmid 2d\Delta}} \frac12 (1 + \left( \frac{d}{p} \right)) = 1.
\]
The bound on the size of $Y_n$ is obtained similarly.
\end{proof}

We treat the upper bounds for $X_n$ and $Y_n$ simultaneously. Let $\pm$ denote a plus sign in the case of $X_n$ and a minus sign in the case of $Y_n$. Similarly, let $\mp$ denote a minus sign in the case of $X_n$ and a plus sign in the case of $Y_n$. We turn to evaluating 
\begin{equation}
\sum_{\substack{|n| \leq X \\ \rm{sq. \; f.}}}  \sum_{\substack{d \mid 2\Delta n \\ \rm{sq. \; f.}}} 
\prod_{\substack{p \mid d \\ p \nmid 2\Delta}} \frac12 (1 + \left( \frac{z}{p} \right)) 
\prod_{\substack{p \mid d \\ p \nmid 2\Delta}} \frac12 (1 + \left( \frac{\pm d/(d,n) \cdot n/(d,n)}{p} \right)) 
\prod_{\substack{p \mid n \\ p \nmid 2d\Delta}} \frac12 (1 + \left( \frac{d}{p} \right)).
\end{equation}
Gathering the factors $\frac{1}{2}$ and expanding the products, we get
\begin{equation}\label{eq:abc}
\sum_{\substack{|n| \leq X \\ \rm{sq. \; f.}}} \sum_{\substack{d \mid 2\Delta n \\ \rm{sq. \; f.}}}
\frac{1}{4^{\omega(d/(d, 2\Delta))}2^{\omega(n/(n,2d\Delta))}}
\sum_{\substack{a \mid d \\ (a, 2\Delta) = 1 \\ a > 0}} \left( \frac{z}{a} \right) 
\sum_{\substack{b \mid d \\ (b, 2\Delta) = 1 \\ b > 0}} \left( \frac{\pm d/(d,n) \cdot n/(d,n)}{b} \right) 
\sum_{\substack{c \mid n \\ (c, 2d\Delta) = 1 \\ c > 0}} \left( \frac{d}{c} \right).
\end{equation}

We rewrite this sum as a sum over pairwise coprime variables. With the convention that the greatest common divisor is always non-negative and the radical of a negative integer is negative, we make the following substitutions:
\begin{align*}
y_1 &= a/(a,b)\\
y_2 &= b/(a,b)\\
y_3 &= (a,b)\\
y_4 &= |d|/(d, 2ab\Delta)\\
y_5 &= c\\
y_6 &= |n|/(n, 2cd\Delta)\\
z_1 &= (d, n, 2\Delta)\\
z_2 &= d/y_1y_2y_3y_4z_1 = d/(d,n)\\
z_3 &= n/y_1y_2y_3y_4y_5y_6z_1 = {\rm{sign}}(n) \cdot (n, 2\Delta)/(d, n, 2\Delta)\\
z_4 &= {\rm{rad}}(2\Delta)/z_1z_2z_3
\end{align*}
All these variables are squarefree, pairwise coprime integers. Moreover, $y_1, \dots, y_6$ are odd and positive, $z_1$ is positive and $z_2$, $z_3$ and $z_4$ have signs equal to $d$, $n$ and $dn\Delta$ respectively. Note that
\begin{align*}
a &= y_1y_3\\
b &= y_2y_3\\
c &= y_5\\
d &= y_1y_2y_3y_4z_1z_2\\
n &= y_1y_2y_3y_4y_5y_6z_1z_3\\
(d,n) &= y_1y_2y_3y_4z_1\\
d/(d,n) &= z_2\\
n/(d,n) &= y_5y_6z_3\\
d/(d,2\Delta) &= {\rm{sign}}(z_2)y_1y_2y_3y_4\\
n/(n,2d\Delta) &= {\rm{sign}}(z_3)y_5y_6\\
{\rm{rad}}(2\Delta) &= z_1z_2z_3z_4
\end{align*}

We substitute this into \eqref{eq:abc}. For every choice of valid $n, d, a, b$ and $c$, there is exactly one choice of $y_1, \dots, y_6, z_1, \dots, z_4$. Writing $\mathbf{y} = (y_1, \dots, y_6)$, $\Piy = y_1y_2y_3y_4y_5y_6$ (and the same for $\mathbf{z}$ and $\Piz$) and $x := X/z_1|z_3|$, we obtain that \eqref{eq:abc} equals
\begin{equation}\label{eq:yz}
\sum_{\Piz = {\rm{rad}(2\Delta)}} \sum_{ \Piy \leq x}
\frac{\mu^2(\Piy \Piz)}{4^{\omega(y_1y_2y_3y_4)}2^{\omega(y_5y_6)}} \left( \frac{z}{y_1y_3} \right) \left( \frac{\pm y_5y_6z_2z_3}{y_2y_3} \right)  \left( \frac{y_1y_2y_3y_4z_1z_2}{y_5} \right).
\end{equation}

\section{Cuboids} \label{sec:cuboids}
With the notation as in the previous section, and fixing values $z_1, \dots, z_4$ (recall that $x := X/z_1|z_3|$), in order to prove \Cref{thm:mainanalytic} we now investigate the sum
\begin{equation}\label{eq:y}
\sum_{ \Piy \leq x}
\frac{\mu^2(\Piy \Piz)}{4^{\omega(y_1y_2y_3y_4)}2^{\omega(y_5y_6)}} \left( \frac{z}{y_1y_3} \right) \left( \frac{\pm y_5y_6z_2z_3}{y_2y_3} \right)  \left( \frac{y_1y_2y_3y_4z_1z_2}{y_5} \right).
\end{equation}
We will show that its value is $O(X \log(X)^{-1/8})$ provided that $d = y_1y_2y_3y_4z_1z_2 \neq 1$. 

In doing this we follow closely the argument of  Fouvry--Kl\"uners given in \cite[Section 5]{FK}, which treats sums of a similar shape to \eqref{eq:y} in order to determine the $k$-th moment of the quantity $\left|2\textup{Cl}(\mathbb{Q}(\sqrt{-n}))[4]\right|$ as $n$ varies over certain positive squarefree integers. The principal difference between that work and our analysis is that, to prove \Cref{thm:mainanalytic}, we need only compute the first moment of the quantities $|X_n|$ and $|Y_n|$, which is the analogue of the $k=1$ case of loc. cit. (higher moments would necessitate studying the result of raising  \eqref{eq:y} to the $k$-th power). This avoids the intricate study of linked indices undertaken in \cite[Section 5.6]{FK}. However, compared to the $k=1$ case of that work, the sum \eqref{eq:y} has additional complexities caused by the asymmetry between the variables $y_1,y_2,y_3,y_4$ and $y_5, y_6$ which introduces some additional case distinction into the work. 

To begin we divide the summation into cuboids: let $D:= 1 + \log(X)^{-9}$, and    we cover  $\{\Piy \leq x\}$ by 
\[
\prod_{1 \leq i \leq 6} [D^{a_i}, D^{a_{i} + 1}],
\]
where $a_i \geq 0$ and $D^{a_1 + \dots + a_6} \leq x$. We write $Y_i := D^{a_i}$ and $\PiY = Y_1Y_2Y_3Y_4Y_5Y_6$.

This covering consists of $O(\log(X)^{60})$ cuboids. The cuboids for which $\PiY \leq x$ and $D^6\PiY > x$ are not fully contained in the set $\{\Piy \leq x\}$. The following lemma shows that the part of the sum \eqref{eq:y} that is contained in these cuboids is negligible:

\begin{lemma}\label{lem:largeproduct}
We have
\[
\left|\sum_{D^{-6}x \leq \Piy \leq x}
\frac{\mu^2(\Piy \Piz)}{4^{\omega(y_1y_2y_3y_4)}2^{\omega(y_5y_6)}} \left( \frac{z}{y_1y_3} \right) \left( \frac{\pm y_5y_6z_2z_3}{y_2y_3} \right)  \left( \frac{y_1y_2y_3y_4z_1z_2}{y_5} \right)\right| = O(X \log(X)^{-1/2}).
\]
\end{lemma}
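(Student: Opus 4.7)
The plan is to proceed by trivial bounds on the oscillating factors: since we seek only an upper bound, we can bound each Jacobi symbol in absolute value by $1$, and replace $\mu^2(\Piy \Piz)$ by $\mu^2(\Piy)$. This reduces the problem to showing that
\[
S(x) := \sum_{D^{-6}x \leq \Piy \leq x} \frac{\mu^2(\Piy)}{4^{\omega(y_1 y_2 y_3 y_4)} 2^{\omega(y_5 y_6)}} = O(X \log(X)^{-1/2}).
\]

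Next I would collapse the sum by grouping terms according to $n := \Piy$. For any squarefree $n$, summing over ordered factorizations $y_1 \cdots y_6 = n$ (which are automatically into pairwise coprime factors), each prime of $n$ independently contributes $4 \cdot \tfrac{1}{4} + 2 \cdot \tfrac{1}{2} = 2$ to the weight (four of the six slots being weighted $1/4$ and the remaining two weighted $1/2$), yielding a total weighted count of $2^{\omega(n)}$. Hence
\[
S(x) = \sum_{D^{-6}x \leq n \leq x} \mu^2(n) \, 2^{\omega(n)}.
\]

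The remaining task is to estimate this short-interval sum. Since $D = 1 + \log(X)^{-9}$, the interval has length $x(1 - D^{-6}) = O(x \log(X)^{-9})$. I would apply Cauchy--Schwarz,
\[
S(x) \leq \Bigl( \sum_{D^{-6}x \leq n \leq x} \mu^2(n) \Bigr)^{1/2} \Bigl( \sum_{n \leq x} \mu^2(n) 4^{\omega(n)} \Bigr)^{1/2},
\]
and bound the first factor using the classical asymptotic $\sum_{n \leq t} \mu^2(n) = (6/\pi^2)t + O(\sqrt{t})$, which yields $O(\sqrt{x}\log(X)^{-9/2} + x^{1/4})$, and the second factor by Theorem~\ref{thm:mertens} with $\kappa = 4$, giving $O(\sqrt{x}\log(x)^{3/2})$. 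Multiplying and using $x \leq X$ yields $S(x) = O(X\log(X)^{-3} + X^{3/4}\log(X)^{3/2})$, which is comfortably stronger than the claimed $O(X\log(X)^{-1/2})$.

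There are no substantive obstacles here: the crude target error $X\log(X)^{-1/2}$ (presumably chosen to match the overall error in Theorem~\ref{thm:mainanalytic}) allows for any reasonably elementary short-interval estimate, and the use of Cauchy--Schwarz cleanly sidesteps the need for a more refined tool such as Shiu's theorem to estimate $\sum_{D^{-6}x \leq n \leq x} \mu^2(n) 2^{\omega(n)}$ directly. The only minor care required is in handling the error term $O(\sqrt{x})$ from the squarefree asymptotic, which accounts for the secondary term $X^{3/4}\log(X)^{3/2}$ above.
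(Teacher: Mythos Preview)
Your proof is correct and follows essentially the same route as the paper: bound the Jacobi symbols trivially, collapse the six-variable sum to a single-variable sum $\sum_{D^{-6}x \le n \le x}\mu^2(n)\kappa^{\omega(n)}$, then apply Cauchy--Schwarz together with \Cref{thm:mertens}. The only cosmetic differences are that the paper first crudely replaces $4^{\omega(y_1y_2y_3y_4)}$ by $2^{\omega(y_1y_2y_3y_4)}$ (arriving at $\kappa=3$ rather than your sharper $\kappa=2$), and in the Cauchy--Schwarz step the paper uses the raw interval length $(x - D^{-6}x)^{1/2}$ rather than the squarefree count; your version therefore even yields a slightly stronger bound $O(X\log(X)^{-3})$.
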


\begin{proof}
The left hand side is bounded from above by
\[
\sum_{D^{-6}x \leq \Piy \leq x}
\frac{\mu^2(\Piy)}{2^{\omega(\Piy)}} = \sum_{D^{-6}x \leq y \leq x}
\frac{\mu^2(y)}{2^{\omega(y)}} 6^{\omega(y)} = \sum_{D^{-6}x \leq y \leq x}
\mu^2(y) 3^{\omega(y)},
\]
where $6^{\omega(y)}$ counts the number of ways the prime factors of $y$ can be divided among $y_1, \dots, y_6$. By the Cauchy-Schwarz inequality it follows that 
\[
\sum_{D^{-6}x \leq y \leq x}
\mu^2(y) 3^{\omega(y)} = \sum_{1 \leq y \leq x}
\mathbf{1}_{[D^{-6}x, x]}(y) \mu^2(y) 3^{\omega(y)} \leq (x - D^{-6}x)^{1/2} \left(\sum_{1 \leq y \leq x} \mu^2(y) 9^{\omega(y)} \right)^{1/2}.
\]
Using \cref{thm:mertens} we can bound this last sum, resulting in
\begin{align*}
(x - D^{-6}x)^{1/2} \left(\sum_{1 \leq y \leq x} \mu^2(y) 9^{\omega(y)} \right)^{1/2} &\ll (x - D^{-6}x)^{1/2} (x \log(x)^8)^{1/2} \\&\ll X \log(X)^4 (1 - D^{-6})^{1/2}.
\end{align*}
It follows from $D = 1 + \log(X)^{-9}$ that $1 - D^{-6} = O(\log(X)^{-9})$, hence we find that
\[
\sum_{D^{-6}x \leq \Piy \leq x}
\frac{\mu^2(\Piy)}{2^{\omega(\Piy)}} \ll X \log(X)^{-1/2}.
\] 
\end{proof}

For the remainder of this section we assume that all cuboids we consider are contained in $\{\Piy \leq x\}$. As the previous lemma shows, the difference between the sum \eqref{eq:y} and the sum over these cuboids is $O(X \log(X)^{-1/2})$. We distinguish between different types of cuboids, which will be handled using different techniques:

\begin{mydef}
Let $\mC$ be any cuboid. We say that $Y_i$ is \emph{small} if it has value less then $M:= \log(X)^{5000}$, \emph{medium} if it is at least $M$ but less than $L := \exp(\log (X)^{1/1000})$, and \emph{large} if it is at least $L$. Moreover, we say $Y_i$ is \emph{at most medium} if it is not large, and \emph{at least medium} if it is not small. 
\end{mydef}

The idea of the proof of \cref{thm:mainanalytic} as follows: if for a cuboid $\mC$ too many $Y_i$ are at most medium, then $\mC$ does not  have enough elements to make a significant contribution, and we can use \cref{thm:mertens} (\cref{lem:smallcuboids}) to show the contribution of those cuboids is $O(X \log(X)^{-1/8})$. Then, if specific pairs of $Y_i$ are at least medium, we can show that the sums of Jacobi symbols exhibit cancellation (\cref{thm:doubleoscillation}, applied in \cref{lem:yiyjlarge}). Finally, if we have specific combinations of large and small $Y_i$, we can apply a variant of the Siegel-Walfisz theorem (\cref{thm:siegelwalfisz}, applied in Lemmas~\ref{lem:Y1Y4large}, \ref{lem:Y2Y3large}, \ref{lem:Y5Y6large}).  Compared to the work of Fouvry--Kl\"uners  mentioned above, roughly, the cuboids ruled out by \Cref{lem:largeproduct} correspond to their first family \cite[Equation (33)]{FK}, the cuboids treated in \cref{lem:smallcuboids}  correspond  to their second family \cite[Equation (37)]{FK}, the cuboids treated in \cref{lem:yiyjlarge}  correspond to their third family \cite[Equation (40)]{FK}, and the cuboids treated across Lemmas~\ref{lem:Y1Y4large}, \ref{lem:Y2Y3large}, \ref{lem:Y5Y6large}  correspond to their fourth family \cite[Equation (43)]{FK}.

\begin{lemma}\label{lem:smallcuboids}
The total contribution to equation \eqref{eq:y} by cuboids for which either of the following two conditions hold
\begin{enumerate}
\item both $Y_5$ and $Y_6$ are at most medium and at least one of $Y_1, \dots, Y_4$ is at most medium; or
\item either $Y_5$ or $Y_6$ is at most medium and at least three of $Y_1, \dots, Y_4$ are at most medium,
\end{enumerate}
is $O(X \log(X)^{-1/8})$.
\end{lemma}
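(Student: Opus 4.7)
First I would bound each Jacobi symbol in \eqref{eq:y} by $1$ in absolute value, so that the contribution of any collection of cuboids is dominated by a sum of $\mu^2(\Piy)/(4^{\omega(y_1\cdots y_4)}2^{\omega(y_5y_6)})$ over the union of those cuboids. For each $S_0\subseteq\{1,\dots,6\}$, the union of all cuboids whose ``at most medium'' index set equals $S_0$ is contained in $\{\mathbf{y}\colon \Piy\le x,\; \mu^2(\Piy)=1,\; y_i\le DL \text{ for } i\in S_0\}$. Since there are only $2^6$ choices of $S_0$, it suffices to bound, for each $S_0$ compatible with Case (1) or Case (2), the quantity
\[
U(S_0) := \sum_{\substack{\Piy\le x,\;\mu^2(\Piy)=1\\ y_i\le DL,\;i\in S_0}} \prod_{i=1}^4\frac{1}{4^{\omega(y_i)}}\prod_{i=5}^6\frac{1}{2^{\omega(y_i)}}.
\]

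The plan is to handle $U(S_0)$ by summing first over the free variables $\{y_i\colon i\notin S_0\}$ with the small variables fixed. Writing $N=\prod_{i\notin S_0}y_i$ and summing over ordered factorizations of each squarefree $N$ (while dropping coprimality with $\prod_{i\in S_0}y_i$, which only worsens the bound by a constant), I would obtain
\[
\sum_{N=\prod_{i\notin S_0}y_i}\prod_{i\notin S_0}\frac{1}{k_i^{\omega(y_i)}} \;=\; K^{\omega(N)},\qquad K := \sum_{i\notin S_0}\frac{1}{k_i},
\]
where $k_i=4$ for $i\le 4$ and $k_i=2$ for $i\in\{5,6\}$. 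Applying the first estimate of Theorem~\ref{thm:mertens}, and using that $\prod_{i\in S_0}y_i\le (DL)^{|S_0|} = X^{o(1)}$ (so that $\log(x/\prod_{i\in S_0}y_i)\asymp \log X$), gives
\[
\sum_{N\le x/\prod_{i\in S_0}y_i}\mu^2(N)K^{\omega(N)}\;\ll\; \frac{x}{\prod_{i\in S_0}y_i}\log(X)^{K-1}.
\]
The remaining sum over the small variables then carries a factor $1/y_i$ for each $i\in S_0$, and the second estimate of Theorem~\ref{thm:mertens} yields $\sum_{y_i\le DL}\mu^2(y_i)/(y_i\,k_i^{\omega(y_i)})\ll \log(L)^{1/k_i}$. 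Multiplying these bounds together, and writing $K_0:=\sum_{i\in S_0}1/k_i = 2-K$, I would obtain
\[
U(S_0)\;\ll\; x\log(X)^{K-1}\prod_{i\in S_0}\log(L)^{1/k_i}\;=\; X\log(X)^{\,1-(999/1000)K_0},
\]
since $\log L=\log(X)^{1/1000}$ and $x\asymp X$.

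Finally, I would check the numerics. In Case (1), $\{5,6\}\subseteq S_0$ and $S_0\cap\{1,2,3,4\}\ne\emptyset$, so $K_0\ge \tfrac12+\tfrac12+\tfrac14=\tfrac54$. In Case (2), $|S_0\cap\{1,2,3,4\}|\ge 3$ and $S_0\cap\{5,6\}\ne\emptyset$, so $K_0\ge \tfrac34+\tfrac12=\tfrac54$. Hence the exponent is at most $1-(999/1000)\cdot\tfrac54 <-\tfrac14+o(1)$, which is strictly less than $-\tfrac18$ for $X$ sufficiently large, and summing $U(S_0)\ll X\log(X)^{-1/8}$ over the finitely many admissible $S_0$ completes the proof.

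I do not expect any serious obstacle; the one point to treat with a little care is the coprimality bookkeeping when factoring the sum, but dropping the coprimality between the ``small'' and ``free'' variables only inflates the bound by an absolute constant and is therefore benign. The essence of the argument is simply that every index in $S_0$ provides a genuine logarithmic saving via Mertens, and the hypotheses of both cases ensure enough such indices to push past the threshold $K_0>\tfrac98$ needed for a $\log(X)^{-1/8}$ bound.
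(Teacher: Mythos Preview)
Your proof is correct and follows essentially the same approach as the paper: bound the Jacobi symbols trivially, then apply the Mertens estimates of Theorem~\ref{thm:mertens} first to the unconstrained (``free'') variables to gain $\log(X)^{K-1}$, and then to the constrained ones to lose at most $\log(L)^{K_0}$. The paper carries this out case by case via WLOG choices (effectively treating only the minimal admissible $S_0$, with $K_0=5/4$), whereas you parameterize uniformly over all $S_0$; the only point to watch is the edge case $S_0=\{1,\dots,6\}$, where $K=0$ and Theorem~\ref{thm:mertens} (stated for $\kappa>0$) does not literally apply to the empty free sum, but this is harmless since $U(S_0)\le U(S_0')$ whenever $S_0'\subseteq S_0$, so the bound for a minimal admissible $S_0'$ suffices.
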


\begin{proof}
Note that the absolute value of the contribution of $\mathbf{y}$ is bounded from above by 
\begin{equation}\label{eq:trivial_bound}
\frac{\mu^2(\Piy)}{4^{\omega(y_1y_2y_3y_4)}2^{\omega(y_5y_6)}}.
\end{equation}

Suppose we are in the first case. Suppose without loss of generality that $Y_4$ is at most medium.  An element $\mathbf{y}$ in one of the cuboids that meets this criterium satisfies $y_4, y_5, y_6 \leq DM$. Let $\beta$ be the product of the $y_1, y_2$ and $y_3$. By the trivial bound \eqref{eq:trivial_bound} above, the absolute value of the total contribution of all cuboids that meet the criterium is bounded from above by
\[
\sum_{y_6 \leq DM} \frac{\mu^2(y_6)}{2^{\omega(y_6)}} \sum_{y_5 \leq DM} \frac{\mu^2(y_5)}{2^{\omega(y_5)}} \sum_{y_4 \leq DM} \frac{\mu^2(y_4)}{4^{\omega(y_4)}} \sum_{\beta \leq x/y_4y_5y_6} \frac{\mu^2(\beta)}{4^{\omega(\beta)}} 3^{\omega(\beta)},
\]
where the term $3^{\omega(\beta)}$ counts the number of ways the prime factors of $\beta$ can be distributed over $y_1$, $y_2$ and $y_3$. It follows from \cref{thm:mertens} that
\[
\sum_{\beta \leq x/y_4y_5y_6} \frac{\mu^2(\beta)}{4^{\omega(\beta)}} 3^{\omega(\beta)} \ll \frac{x}{y_4y_5y_6} \log(x)^{-1/4}.
\]
The entire sum can now be bounded by more applications of \cref{thm:mertens}:
\begin{align*}
x \log(x)^{-1/4} \sum_{y_6 \leq DM} \frac{\mu^2(y_6)}{y_6 2^{\omega(y_6)}} \sum_{y_5 \leq DM} &\frac{\mu^2(y_5)}{y_5 2^{\omega(y_5)}} \sum_{y_4 \leq DM} \frac{\mu^2(y_4)}{y_4 4^{\omega(y_4)}} \\&\ll x \log(x)^{-1/4} \log(DM)^{1/2} \log(DM)^{1/2} \log(DM)^{1/4} \\&\ll X \log(X)^{-1/8}.
\end{align*}

Now suppose we are in the second case. Without loss of generality, assume that $Y_5$ is at most medium and $Y_2, Y_3$ and $Y_4$ are at most medium. Denote by $\alpha$ the product of $y_2, y_3$ and $y_4$. The contribution to the sum by cuboids that meet this criterium can be bounded from above by
\begin{equation}\label{eq:oney5y6small}
\sum_{\alpha \leq (DM)^3} \frac{\mu^2(\alpha)3^{\omega(\alpha)}}{4^{\omega(\alpha)}} \sum_{y_5 \leq DM} \frac{\mu^2(y_5)}{2^{\omega(y_5)}} \sum_{y_1y_6 \leq x/\alpha y_5} \frac{\mu^2(y_1y_6)}{4^{\omega(y_1)} 2^{\omega(y_6)}}.
\end{equation}
Let $f = (\beta \mapsto 4^{-\omega(\beta)})$ and $g = (\gamma \mapsto 2^{-\omega(\gamma)})$. Then, by \cref{thm:mertens},
\begin{align*}
\sum_{y_1y_6 \leq x/\alpha y_5} \frac{\mu^2(y_1y_6)}{4^{\omega(y_1)} 2^{\omega(y_6)}} &= \sum_{n \leq x/\alpha y_5} \mu^2(n) (f \ast g)(n) \\&= \sum_{n \leq x/\alpha y_5} \mu^2(n)\left(\frac{3}{4}\right)^{\omega(n)} \ll \frac{x}{\alpha y_5} \log(x)^{-1/4},
\end{align*}
as for any prime $p$ we have $(f \ast g)(p) = 1/2 + 1/4 = 3/4$. Similar as before, it follows from two more applications of \cref{thm:mertens} that 
\begin{align*}
x \log(x)^{-1/4} \sum_{\alpha \leq (DM)^3} \frac{\mu^2(\alpha)3^{\omega(\alpha)}}{4^{\omega(\alpha)}} \sum_{y_5 \leq DM} \frac{\mu^2(y_5)}{2^{\omega(y_5)}}  &\ll x \log(x)^{-1/4} \log((DM)^3)^{3/4} \log(DM)^{1/2} \\&\ll X \log(X)^{-1/8}.\qedhere
\end{align*}
\end{proof}

The previous lemma implies that it suffices to show that the following cuboids make a total contribution of $O(X \log(X)^{-1/8})$:
\begin{itemize}
\item cuboids for which $Y_1$, $Y_2$, $Y_3$ and $Y_4$ are large;
\item cuboids for which two of $Y_1$, $Y_2$, $Y_3$ and $Y_4$ are large and one of $Y_5$ and $Y_6$ is large;
\item cuboids for which $Y_5$ and $Y_6$ are large and $Y_1Y_2Y_3Y_4z_1z_2 \neq 1$ (so that $d \neq 1$).
\end{itemize}
The first two cases are dealt with in \cref{lem:yiyjlarge}, \ref{lem:Y1Y4large} and \ref{lem:Y2Y3large}, and the last case is dealt with in \cref{lem:Y5Y6large}, resulting in \cref{cor:all_cuboids_small}. This proves \cref{thm:mainanalytic}.

As opposed to the previous two lemmas, the following lemmas treat cuboids one at a time. We divided $\{\Piy \leq x\}$ into $O(\log(X)^{60})$ cuboids, thus any cuboid that makes a contribution of $O(X \log(X)^{-1/8 - 60})$ can be safely neglected. We make use of the following notation: we write
\[
S(\mC, \mathbf{z}) := \sum_{ \mathbf{y} \in \mC}
\frac{\mu^2(\Piy \Piz)}{4^{\omega(y_1y_2y_3y_4)}2^{\omega(y_5y_6)}} \left( \frac{z}{y_1y_3} \right) \left( \frac{\pm y_5y_6z_2z_3}{y_2y_3} \right)  \left( \frac{y_1y_2y_3y_4z_1z_2}{y_5} \right),
\]
and by $y_i \in \mC$ we mean $y_i \in [Y_i, DY_i]$. Moreover, for $1, \leq i,j \leq 6$ we write $\sum_{\hyi \in \mC}$ for a sum over $y_k \in \mC$, $k\neq i$ (i.\,e.\, a sum over five variables), and we write $\sum_{\hyij \in \mC}$ for a sum over $y_k \in \mC$, $k\neq i, j$ (i.\,e.\, a sum over four variables).

\begin{lemma} 
\label{lem:yiyjlarge}
Let $(i,j) \in \{ (1,5), (4,5), (2, 6), (3, 6)\}$ and let $\mC$ be a cuboid for which both $Y_i$ and $Y_j$ are at least medium. Then $|S(\mC, \mathbf{z})| = O(X \log(X)^{-1/8 - 60})$.
\end{lemma}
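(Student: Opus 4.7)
The plan is to isolate in $S(\mathcal{C},\mathbf{z})$ the bi-multiplicative Jacobi symbol in $y_i$ and $y_j$ and apply the double oscillation bound of Theorem \ref{thm:doubleoscillation}. After expanding the three Jacobi symbols in the summand by full multiplicativity in both arguments, one checks that for each of the four pairs $(i,j)\in\{(1,5),(4,5),(2,6),(3,6)\}$ exactly one factor involves both $y_i$ and $y_j$, namely $\left(\frac{y_1}{y_5}\right)$, $\left(\frac{y_4}{y_5}\right)$, $\left(\frac{y_6}{y_2}\right)$ and $\left(\frac{y_6}{y_3}\right)$ respectively; every other factor depends on at most one of the two variables.

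The strategy is to fix the remaining four variables, which I denote by $\hyij\in\mC$, and perform the inner double sum over $y_i\in[Y_i,DY_i]$, $y_j\in[Y_j,DY_j]$ first. For the pairs $(2,6)$ and $(3,6)$ I will first apply quadratic reciprocity to rewrite the interaction symbol with the denominator and numerator swapped, introducing a sign depending only on $y_i,y_j\bmod 4$ which is absorbed by partitioning the inner sum into (at most sixteen) residue classes. The constraint $\mu^2(\Piy\Piz)$ factors as $\mu^2(2y_i)\mu^2(2y_j)$ together with coprimality requirements between $\{y_i,y_j\}$ and the rest; the restrictions with $\hyij\Piz$ are absorbed into bounded sequences $f(y_i)$, $g(y_j)$ (together with all remaining Jacobi symbols that depend on a single summation variable and the $\pm$ sign factor), while the condition $(y_i,y_j)=1$ is removed by Möbius inversion $\mathbf{1}_{(y_i,y_j)=1}=\sum_{\ell\mid (y_i,y_j)}\mu(\ell)$, leaving a harmless outer sum over small squarefree $\ell$.

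For each such $\ell$ and each residue class, Theorem \ref{thm:doubleoscillation} bounds the inner double sum by $O_\epsilon\bigl(Y_iY_j(Y_i^{-1/2+\epsilon}+Y_j^{-1/2+\epsilon})\bigr)$. Since $Y_i,Y_j\geq M=\log(X)^{5000}$, this saves a factor of at least $M^{1/2-\epsilon}=\log(X)^{2500-o(1)}$ over the trivial bound. Summing trivially over the remaining four variables in $\mC$ costs $\prod_{k\neq i,j}Y_k$ up to polylog factors, and the sum over the Möbius variable $\ell$ is absolutely convergent, so the total is at most $\PiY\cdot\log(X)^{-2500+o(1)}\ll X\log(X)^{-2500+o(1)}$, which comfortably beats the target bound $X\log(X)^{-1/8-60}$.

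The main technical obstacle is the bookkeeping involved in verifying that the summand can genuinely be written in the form $f(y_i)g(y_j)\left(\frac{y_i}{y_j}\right)\mu^2(2y_i)\mu^2(2y_j)$ after the above manipulations: one must track the reciprocity sign, the Möbius sum encoding coprimality of $y_i$ and $y_j$, the $\pm$ factor appearing inside the second Jacobi symbol of $S(\mC,\mathbf{z})$, and the coprimality of $y_i,y_j$ with $\hyij\Piz$. None of these destroys the bi-multiplicative structure needed for Theorem \ref{thm:doubleoscillation}, but they do generate a modest number of case distinctions that must be enumerated uniformly.
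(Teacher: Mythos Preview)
Your approach is correct and matches the paper's: isolate the bilinear Jacobi symbol in $(y_i,y_j)$, fix the remaining four variables, and apply the double oscillation bound (Theorem~\ref{thm:doubleoscillation}) to the inner double sum, then sum trivially outside.

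Two of your bookkeeping steps are unnecessary, however. First, since the Jacobi symbol $\left(\tfrac{y_i}{y_j}\right)$ vanishes whenever $(y_i,y_j)>1$, one has $\mu^2(y_iy_j)\left(\tfrac{y_i}{y_j}\right)=\mu^2(2y_i)\mu^2(2y_j)\left(\tfrac{y_i}{y_j}\right)$, so no M\"obius inversion over a divisor $\ell$ is required to remove the coprimality constraint. Second, Theorem~\ref{thm:doubleoscillation} is symmetric in its two arguments, so for the pairs $(2,6)$ and $(3,6)$ you can apply it directly with the symbol $\left(\tfrac{y_j}{y_i}\right)$ that naturally appears, without invoking quadratic reciprocity or partitioning into residue classes modulo $4$. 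With these simplifications the summand factors directly as $\mu^2(y_iy_j)f(\hat y_i)g(\hat y_j)\left(\tfrac{y_i}{y_j}\right)$ (or with the symbol reversed), which is exactly how the paper proceeds.
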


\begin{proof}
We write $f(\hat{y_i})$ for a function that does not depend on $y_i$ but might depend on $y_k$ for $1 \leq k \leq 6$, $k \neq i$. Note that we can write 
\[
\frac{\mu^2(\Piy \Piz)}{4^{\omega(y_1y_2y_3y_4)}2^{\omega(y_5y_6)}}  \left( \frac{z}{y_1y_3} \right) \left( \frac{\pm y_5y_6z_2z_3}{y_2y_3} \right)  \left( \frac{y_1y_2y_3y_4z_1z_2}{y_5} \right) = \mu^2(y_iy_j)f(\hat{y_i})g(\hat{y_j}) \left( \frac{y_i}{y_j} \right),
\]
where $f$ and $g$ are functions of modulus at most $1$. By \cref{thm:doubleoscillation} (choosing $\epsilon = 1/6$),
\[
\sum_{y_i, y_j \in \mC} \mu^2(y_iy_j) f(\hat{y_i})g(\hat{y_j}) \left( \frac{y_i}{y_j} \right) \ll D^2 Y_iY_j(Y_i^{-1/3} + Y_j^{-1/3}) \ll \frac{D^2 Y_iY_j}{\log(X)^{61}},
\]
as $Y_i, Y_j \geq M = \log(X)^{5000}$. This implies, by summing trivially over the other variables, that
\begin{align*}
|S(\mC, \mathbf{z})| \leq \sum_{\hyij \in \mC} \left| \sum_{y_i, y_j \in \mC} \mu^2(y_iy_j)f(\hat{y_i})g(\hat{y_j}) \left( \frac{y_i}{y_j} \right) \right| &\ll \left( \prod_{\substack{1 \leq k \leq 6 \\ k \neq i, j }} D Y_k \right)\frac{D^2 Y_iY_j}{\log(X)^{61}} \\&\ll \frac{X}{\log(X)^{61}},
\end{align*}
as $\PiY \leq X$. 
\end{proof}

\begin{lemma}
\label{lem:Y1Y4large}
Let $\mC$ be a cuboid in which $Y_1$ is large. Then $|S(\mC, \mathbf{z})| = O(X \log(X)^{-1/8 - 60})$.
\end{lemma}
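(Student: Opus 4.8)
The plan is to single out the variable $y_1$ and extract cancellation from the Jacobi symbol $\left(\tfrac{z}{y_1}\right)$ using the Siegel--Walfisz bound of \Cref{thm:siegelwalfisz}, having first removed an easy case via \Cref{lem:yiyjlarge}. If $Y_5$ is at least medium, then $(i,j)=(1,5)$ is one of the pairs treated in \Cref{lem:yiyjlarge}, and since $Y_1$ is large it is in particular at least medium, so that lemma already yields $|S(\mC,\mathbf{z})|=O(X\log(X)^{-1/8-60})$. Hence from now on I may assume $Y_5$ is small, so that $y_5\le DM\ll\log(X)^{5000}$ throughout $\mC$.

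Next I would fix $y_2,\dots,y_6$ and examine the inner sum over $y_1\in[Y_1,DY_1]$. The only factors in the summand of $S(\mC,\mathbf{z})$ that involve $y_1$ are the squarefree weight $\mu^2(\Piy\Piz)$ — which, with the remaining variables fixed and squarefree, forces $y_1$ squarefree and coprime to $r:=\mathrm{rad}\bigl(2\Delta\, y_2y_3y_4y_5y_6\, z_1z_2z_3z_4\bigr)$ — the weight $4^{-\omega(y_1)}$, and the two Jacobi symbols, whose $y_1$-part is $\left(\tfrac{z}{y_1}\right)\left(\tfrac{y_1}{y_5}\right)$. On the support of the sum $y_1$ is coprime to $2zy_5$, and there this product is the restriction of a Dirichlet character $\chi_{\mathbf{y}}$: the factor $y_1\mapsto\left(\tfrac{z}{y_1}\right)$ has conductor dividing $4|z|$ and is non-trivial because $z$ is not a square (this is the one place we use $K\ne\mathbb{Q}$), while $y_1\mapsto\left(\tfrac{y_1}{y_5}\right)$ has conductor dividing $y_5$. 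Hence $\chi_{\mathbf{y}}$ has conductor $q\mid 4|z|\,y_5$, so $q\ll_K y_5\ll_K\log(X)^{5000}$; and $\chi_{\mathbf{y}}$ is non-trivial, since the conductors of its two factors are coprime (as $y_5$ is coprime to $2\Delta$) and the first is non-trivial.

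Replacing $\chi_{\mathbf{y}}$ by the primitive character inducing it (the two agree on the support of the sum) and invoking \Cref{thm:siegelwalfisz} with $k=4$, with modulus $q$, with the integer $r$, and on the interval $[Y_1,DY_1]$, we obtain for any $A>0$
\[
\Bigl|\sum_{y_1\in[Y_1,DY_1]}\frac{\mu^2(y_1 r)}{4^{\omega(y_1)}}\,\chi_{\mathbf{y}}(y_1)\Bigr|\ \ll_{A}\ \sqrt{q}\;DY_1\,\log(DY_1)^{-A}\,2^{\omega(r)}.
\]
Since the $y_i$ and $z_j$ are pairwise coprime and coprime to $2\Delta$, we have $2^{\omega(r)}\ll_K 2^{\omega(y_2y_3y_4)}2^{\omega(y_5y_6)}$, which cancels the weight $4^{-\omega(y_2y_3y_4)}2^{-\omega(y_5y_6)}$ attached to the remaining variables; summing these trivially over $\mC$ then costs at most $\sum 2^{-\omega(y_2y_3y_4)}\le D^{5}Y_2Y_3Y_4Y_5Y_6$. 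Multiplying through, and using $D^6\ll1$ together with $\PiY\le x\le X$, gives
\[
|S(\mC,\mathbf{z})|\ \ll_{K,A}\ \sqrt{q}\;X\,\log(DY_1)^{-A}\ \ll_{K,A}\ X\,\log(X)^{2500}\,\log(DY_1)^{-A}.
\]
As $Y_1$ is large, $\log(DY_1)\ge\log(Y_1)\ge\log(X)^{1/1000}$, whence $\log(DY_1)^{-A}\le\log(X)^{-A/1000}$; choosing $A$ large enough in terms of $K$ delivers $|S(\mC,\mathbf{z})|=O(X\log(X)^{-1/8-60})$.

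The crux is the identification of $\chi_{\mathbf{y}}$: one must check carefully that the $y_1$-part of the product of Jacobi symbols is indeed the restriction of a Dirichlet character whose conductor is only a bounded multiple of $y_5$ (so that the $\sqrt q$ loss in Siegel--Walfisz is swallowed by a power of $\log X$), and that this character is genuinely non-principal — which is exactly where $K\ne\mathbb{Q}$ enters, and which is why the companion case ``$Y_4$ large'' (where the analogous character is $y_4\mapsto\left(\tfrac{y_4}{y_5}\right)$, trivial when $y_5=1$) needs a different treatment. The remainder is routine bookkeeping of the coprimality conditions, needed to pin down $r$ and to see that the factor $2^{\omega(r)}$ is absorbed rather than lost.
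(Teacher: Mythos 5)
Your proposal is correct and follows essentially the same strategy as the paper: dispose of the case $Y_5$ at least medium via \Cref{lem:yiyjlarge}, then for $Y_5$ small isolate the $y_1$-sum and apply \Cref{thm:siegelwalfisz} to the non-principal character built from $\left(\frac{z}{y_1}\right)\left(\frac{y_1}{y_5}\right)$, with the small modulus (a bounded multiple of $y_5\ll\log(X)^{5000}$) controlling the $\sqrt q$ loss. The only cosmetic difference is that the paper makes the quadratic reciprocity step explicit by splitting on $y_5\bmod 4$ to rewrite the product as $\left(\frac{\pm zy_5}{y_1}\right)$, whereas you absorb this into the observation that the $y_1$-part is the restriction of a Dirichlet character of conductor dividing $4|z|y_5$, non-principal because $z$ is not a square and the two conductors are coprime.
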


\begin{proof}
The case where $Y_5$ is at least medium has been dealt with in \cref{lem:yiyjlarge}. We assume $Y_5$ is small. We have
\[
|S(\mC, \mathbf{z})| \leq \sum_{\hy{1} \in \mC} \frac{1}{2^{\omega(y_2y_3y_4y_5y_6)}} \left| \sum_{y_1 \in \mC} \frac{\mu^2(\Piy \Piz)}{4^{\omega(y_1)}} \left( \frac{z}{y_1} \right)\left( \frac{y_1}{y_5} \right) \right|.
\]
We show we have cancellation in the inner sum. Suppose that $y_5 \equiv 1 \bmod 4$. The inner sum then simplifies to
\[
\left| \sum_{y_1 \in \mC} \frac{\mu^2(\Piy \Piz)}{4^{\omega(y_1)}} \left( \frac{zy_5}{y_1} \right) \right|.
\]
Note that $zy_5$ is not a square: as $z \neq 1$ and $y_5 > 0$, we have $zy_5 \neq 1$. Since $z$ divides ${\rm rad}(2\Delta) = z_1z_2z_3z_4$, we see that $z$ and $y_5$ are coprime. Because $z$ and $y_5$ are also squarefree, we conclude that $zy_5$ is not a square. By \cref{thm:siegelwalfisz} we find that, for any $A>0$,
\begin{align*}
\left| \sum_{y_1 \in \mC} \frac{\mu^2(\Piy \Piz)}{4^{\omega(y_1)}} \left( \frac{z}{y_1} \right) \right| 
&= O_A(\sqrt{z}Y_1 \log(Y_1)^{-A} 2^{\omega(y_2y_3y_4y_5y_6z_1z_2z_3z_4)})
\\&= O_A(Y_1 \log(X)^{-A/1000} 2^{\omega(y_2y_3y_4y_5y_6)}),
\end{align*}
noting that $\log(Y_1) \geq \log(L) = \log(X)^{1/1000}$.

Now suppose $y_5 \equiv 3 \bmod 4$. Then the inner sum equals 
\[
\left| \sum_{y_1 \in \mC} \frac{\mu^2(\Piy \Piz)}{4^{\omega(y_1)}} \left( \frac{-zy_5}{y_1} \right) \right|.
\]
As $y_5 \equiv 3 \bmod 4$, we have $-zy_5 \neq 1$. As mentioned, $z$ and $y_5$ are coprime and squarefree, and hence $-zy_5$ is not a square. We can thus apply \cref{thm:siegelwalfisz} to show that, for any $A>0$,
\[
\left| \sum_{y_1 \in \mC} \frac{\mu^2(\Piy \Piz)}{4^{\omega(y_1)}} \left( \frac{-zy_5}{y_1} \right) \right| = O_A(Y_1 \log(X)^{2500-A/1000} 2^{\omega(y_2y_3y_4y_5y_6)}),
\]
noting that $\sqrt{y_5} \leq \sqrt{M} = \log(X)^{2500}$. In both cases we find that the inner sum is in $O_A(Y_1 \log(X)^{2500-A/1000} 2^{\omega(y_2y_3y_4y_5y_6)})$. This shows that 
\[
|S(\mC, \mathbf{z})| \leq \sum_{\hy{1} \in \mC} \frac{1}{2^{\omega(y_2y_3y_4y_5y_6)}} \left| \sum_{y_1 \in \mC} \frac{\mu^2(\Piy \Piz)}{4^{\omega(y_1)}} \left( \frac{z}{y_1} \right)\left( \frac{y_1}{y_5} \right) \right| 
\]
is in $O_A(\PiY \log(X)^{2500-A/1000}) = O_A(X \log(X)^{2500-A/1000})$, as $\PiY \leq X$. Hence, if we choose $A$ large enough (i.e.\ at least $2560625$), we obtain that $|S(\mC, \mathbf{z})| = O(X \log(X)^{-1/8 - 60})$.
\end{proof}

\begin{lemma}
\label{lem:Y2Y3large}
Let $\mC$ be a cuboid in which both $Y_2$ and $Y_3$ are large. Then $|S(\mC, \mathbf{z})| = O(X \log(X)^{-1/8 - 60})$.
\end{lemma}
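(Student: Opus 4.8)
The strategy mirrors the proof of Lemma~\ref{lem:Y1Y4large}: isolate one of the large variables, use quadratic reciprocity to collapse the Jacobi symbols involving it into a single symbol $\bigl(\tfrac{\ast}{\,\cdot\,}\bigr)$ whose conductor is only a power of $\log X$, and then invoke Siegel--Walfisz (Theorem~\ref{thm:siegelwalfisz}). The twist here is that the variable against which the conductor would be genuinely bounded is $y_5$, which may itself be large in the cuboids still to be treated; so we must instead sum over $y_2$ or $y_3$, and it is precisely the hypothesis that \emph{both} $Y_2$ and $Y_3$ are large that lets us pick whichever one works. First a preliminary reduction: if $Y_6$ is at least medium, then since $Y_3$ is large Lemma~\ref{lem:yiyjlarge} applied to the pair $(3,6)$ already gives $|S(\mC,\mathbf z)| = O(X\log(X)^{-1/8-60})$, so we may assume $Y_6$ is small, whence $y_6 \le DM \ll \log(X)^{5001}$ on all of $\mC$.

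Fixing all variables but $y_3$, the $y_3$-dependence of the summand in \eqref{eq:y} is $\mu^2(y_3 r)\,4^{-\omega(y_3)}$ times a factor independent of $y_3$ times $\bigl(\tfrac{z}{y_3}\bigr)\bigl(\tfrac{\pm y_5 y_6 z_2 z_3}{y_3}\bigr)\bigl(\tfrac{y_3}{y_5}\bigr)$, where $r$ is the product of the other variables. Quadratic reciprocity gives $\bigl(\tfrac{y_3}{y_5}\bigr) = \bigl(\tfrac{\delta y_5}{y_3}\bigr)$ with $\delta = \delta(y_5) = (-1)^{(y_5-1)/2}\in\{\pm1\}$, and since $\gcd(y_5,y_3)=1$ the factor $y_5^2$ drops out, so the above product is $\bigl(\tfrac{\sigma z y_6 z_2 z_3}{y_3}\bigr)$ with $\sigma := \pm\delta(y_5)\in\{\pm1\}$. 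The same computation, now isolating $y_2$ (which does not appear in $\bigl(\tfrac{z}{\,\cdot\,}\bigr)$), produces $\bigl(\tfrac{\sigma y_6 z_2 z_3}{y_2}\bigr)$ with the \emph{same} sign $\sigma$. The key point is now purely elementary: at least one of $\sigma z y_6 z_2 z_3$ and $\sigma y_6 z_2 z_3$ is not a perfect square. Indeed $y_6$, $z_2$, $z_3$ are pairwise coprime squarefree integers (as $y_6$ is prime to $2\Delta$ while $z_2,z_3 \mid \mathrm{rad}(2\Delta)$), so $\sigma y_6 z_2 z_3$ is squarefree; if it were a perfect square it would equal $1$, and then $\sigma z y_6 z_2 z_3 = z$ would be a squarefree perfect square as well, forcing $z=1$ and contradicting that $\Q(\sqrt z)$ is a field.

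We split the sum over $\mathbf y\in\mC$ according to whether $\sigma z y_6 z_2 z_3$ is a perfect square. On the part where it is not, we group by the variables other than $y_3$ and apply Theorem~\ref{thm:siegelwalfisz} with $k=4$ to the inner sum over $y_3\in[Y_3,DY_3]$: its character is the nontrivial real character cut out by the squarefree part of $\sigma z y_6 z_2 z_3$, of conductor $q\ll |z y_6 z_2 z_3|\ll \log(X)^{5001}$. On the complementary part $\sigma y_6 z_2 z_3$ is not a perfect square, and we argue identically with the inner sum over $y_2\in[Y_2,DY_2]$, again of conductor $q\ll\log(X)^{5001}$. Since $Y_2,Y_3\ge L$ we have $\log Y_2,\log Y_3\ge\log(X)^{1/1000}$, so Theorem~\ref{thm:siegelwalfisz} yields a saving of $\log(X)^{-A/1000}$ for every $A>0$; exactly as in Lemma~\ref{lem:Y1Y4large} the factor $2^{\omega(r)}$ it produces is absorbed by the weight $4^{-\omega}2^{-\omega}$ (using $2^{\omega(\mathrm{rad}(2\Delta))}=O_K(1)$), and summing trivially over the remaining variables (with $\prod_k DY_k\ll X$) leaves $|S(\mC,\mathbf z)|\ll_A X\log(X)^{2501-A/1000}$. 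Choosing $A$ large enough gives the claim. The only real work lies in the middle paragraph: one must keep the conductor polylogarithmic—which is why the case $Y_6$ at least medium is disposed of first—and exploit $z\ne1$ to rule out the degenerate possibility that neither of the two candidate inner characters oscillates.
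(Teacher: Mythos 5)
Your proof is correct and follows essentially the same route as the paper: first dispose of the case $Y_6$ at least medium via Lemma~\ref{lem:yiyjlarge} with the pair $(3,6)$, then use quadratic reciprocity to consolidate the $y_3$-dependent (resp.\ $y_2$-dependent) Jacobi symbols into a single symbol with polylogarithmic modulus, and apply Theorem~\ref{thm:siegelwalfisz} to whichever of the two inner sums carries a nontrivial character. The only difference is cosmetic: you split cases by whether $\sigma z y_6 z_2 z_3$ is a square, whereas the paper splits by whether $\sigma y_6 z_2 z_3$ is (reducing in that case to the conductor-$O_K(1)$ character $\left(\tfrac{z}{\cdot}\right)$); both dichotomies rest on the same elementary observation that $\sigma y_6 z_2 z_3 = 1$ forces $\sigma z y_6 z_2 z_3 = z \neq \square$, and both yield $O_A\bigl(X\log(X)^{O(1)-A/1000}\bigr)$ with $A$ chosen large.
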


\begin{proof}
The proof follows along the same lines as Lemma~\ref{lem:Y1Y4large}. The case where $Y_6$ is at least medium has been dealt with in \cref{lem:yiyjlarge}. We assume $Y_6$ is small. We have
\begin{equation}\label{eq:y2y3}
|S(\mC, \mathbf{z})| \leq \sum_{\hyz{2}{3} \in \mC} \frac{1}{2^{\omega(y_1y_4y_5y_6)}} \left| \sum_{y_2, y_3 \in \mC} \frac{\mu^2(\Piy \Piz)}{4^{\omega(y_2y_3)}} \left( \frac{z}{y_3} \right)\left( \frac{y_5}{y_2y_3} \right)\left( \frac{y_2y_3}{y_5} \right)\left( \frac{\pm y_6z_2z_3}{y_2y_3} \right)\right|.
\end{equation}

We show cancellation in the inner sum by distinguishing two cases:
\begin{itemize}
\item If $y_5 \equiv 1 \bmod 4$, the inner sum simplifies to 
\[
\sum_{y_2, y_3 \in \mC} \frac{\mu^2(\Piy \Piz)}{4^{\omega(y_2y_3)}} \left( \frac{z}{y_3} \right)\left( \frac{\pm y_6z_2z_3}{y_2y_3} \right).
\]

We bound this sum in two ways:
\begin{align}
\left| \sum_{y_2, y_3 \in \mC} \frac{\mu^2(\Piy \Piz)}{4^{\omega(y_2y_3)}} \left( \frac{z}{y_3} \right)\left( \frac{\pm y_6z_2z_3}{y_2y_3} \right) \right|
&\leq \sum_{y_2 \in \mC} \frac{1}{2^{\omega(y_2)}} \left| \sum_{y_3 \in \mC} \frac{\mu^2(\Piy \Piz)}{4^{\omega(y_3)}} \left( \frac{z}{y_3} \right)\left( \frac{\pm y_6z_2z_3}{y_3} \right)\right|  \label{eq:y2y3firstbound}
\\
\left| \sum_{y_2, y_3 \in \mC} \frac{\mu^2(\Piy \Piz)}{4^{\omega(y_2y_3)}} \left( \frac{z}{y_3} \right)\left( \frac{\pm y_6z_2z_3}{y_2y_3} \right) \right| &\leq \sum_{y_3 \in \mC} \frac{1}{2^{\omega(y_3)}} \left| \sum_{y_2 \in \mC} \frac{\mu^2(\Piy \Piz)}{4^{\omega(y_2)}} \left( \frac{\pm y_6z_2z_3}{y_2} \right) \right|. \label{eq:y2y3secondbound}
\end{align}

Suppose that $\pm y_6z_2z_3$ is a square. Then by \eqref{eq:y2y3firstbound} and \cref{thm:siegelwalfisz} we have
\begin{align*}
\left| \sum_{y_2, y_3 \in \mC} \frac{\mu^2(\Piy \Piz)}{4^{\omega(y_2y_3)}} \left( \frac{z}{y_3} \right)\left( \frac{\pm y_6z_2z_3}{y_2y_3} \right)\right| 
&\leq \sum_{y_2 \in \mC} \frac{1}{2^{\omega(y_2)}} \left| \sum_{y_3 \in \mC} \frac{\mu^2(\Piy \Piz)}{4^{\omega(y_3)}} \left( \frac{z}{y_3} \right)\right|
\\&= O_A(Y_2Y_3 \log(X)^{-A/1000} 2^{\omega(y_1y_4y_5y_6)}).
\end{align*}

If $\pm y_6z_2z_3$ is not a square, we have by \eqref{eq:y2y3secondbound} and \cref{thm:siegelwalfisz} that
\begin{align*}
\left| \sum_{y_2, y_3 \in \mC} \frac{\mu^2(\Piy \Piz)}{4^{\omega(y_2y_3)}} \left( \frac{z}{y_3} \right)\left( \frac{\pm y_6z_2z_3}{y_2y_3} \right)\right| 
&\leq \sum_{y_3 \in \mC} \frac{1}{2^{\omega(y_3)}} \left| \sum_{y_2 \in \mC} \frac{\mu^2(\Piy \Piz)}{4^{\omega(y_2)}} \left( \frac{\pm y_6z_2z_3}{y_2} \right)\right|
\\&= O_A(Y_2Y_3 \log(X)^{2500-A/1000} 2^{\omega(y_1y_4y_5y_6)}),
\end{align*}
as $\sqrt{y_6} \leq \log(X)^{2500}$.

It follows that, regardless of whether $\pm y_6z_2z_3$ is a square or not,
\[
\left| \sum_{y_2, y_3 \in \mC} \frac{\mu^2(\Piy \Piz)}{4^{\omega(y_2y_3)}} \left( \frac{z}{y_3} \right)\left( \frac{\pm y_6z_2z_3}{y_2y_3} \right)\right| = O_A(Y_2Y_3 \log(X)^{2500-A/1000} 2^{\omega(y_1y_4y_5y_6)}).
\]

\item If $y_5 \equiv 3 \bmod 4$, the inner sum simplifies to 
\[
\sum_{y_2, y_3 \in \mC} \frac{\mu^2(\Piy \Piz)}{4^{\omega(y_2y_3)}} \left( \frac{z}{y_3} \right)\left( \frac{\mp y_6z_2z_3}{y_2y_3} \right),
\]
and we can argue as before, distinguishing between $\mp y_6z_2z_3$ a square and $\mp y_6z_2z_3$ not a square.
\end{itemize}

We have shown that
\[
\left| \sum_{y_2, y_3 \in \mC} \frac{\mu^2(\Piy \Piz)}{4^{\omega(y_2y_3)}} \left( \frac{z}{y_3} \right)\left( \frac{y_5}{y_2y_3} \right)\left( \frac{y_2y_3}{y_5} \right)\left( \frac{\pm y_6z_2z_3}{y_2y_3} \right)\right|
\]
is in $O_A(Y_2Y_3 \log(X)^{2500-A/1000} 2^{\omega(y_1y_4y_5y_6)})$ for any $A>0$. It follows from \eqref{eq:y2y3} that, for any $A>0$,
\[
|S(\mC, \mathbf{z})| = O_A(X \log(X)^{2500-A/1000}).
\] 

\noindent This completes the proof.
\end{proof}

\begin{lemma}\label{lem:Y5Y6large}
Let $\mC$ be a cuboid for which $Y_5$ and $Y_6$ are large and $Y_1Y_2Y_3Y_4z_1z_2 \neq 1$. Then
\[
|S(\mC, \mathbf{z})| = O(X \log(X)^{-1/8 - 60}).
\]
\end{lemma}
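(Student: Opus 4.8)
The plan is to produce cancellation in a sum over one of the two large edges $y_5$ or $y_6$ via the Siegel--Walfisz bound \cref{thm:siegelwalfisz}: since $Y_5, Y_6 \ge L = \exp(\log(X)^{1/1000})$, any saving of the form $\log(Y_i)^{-A}$ beats every fixed power of $\log X$, so it suffices to reduce the relevant moduli to size $\log(X)^{O(1)}$.

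First I would reduce to the case where $Y_1, Y_2, Y_3, Y_4$ are all small. If one of them is at least medium, then, since $Y_5$ and $Y_6$ are large (hence at least medium), one of the pairs $(Y_1, Y_5)$, $(Y_4, Y_5)$, $(Y_2, Y_6)$, $(Y_3, Y_6)$ consists of two entries that are at least medium, and the bound follows from \cref{lem:yiyjlarge}. Assuming $Y_1, \dots, Y_4$ small, we have $y_1, \dots, y_4 \le DM \ll \log(X)^{5001}$, so $|d| = |y_1 y_2 y_3 y_4 z_1 z_2| \ll \log(X)^{O(1)}$ (the constant depending on $K$ via $z_1|z_2| \mid \mathrm{rad}(2|\Delta|)$); moreover, $|d|$ being squarefree, the hypothesis $Y_1 Y_2 Y_3 Y_4 z_1 z_2 \ne 1$ says exactly that $d$ is not a perfect square.

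Next I would isolate the $y_5$-dependence of the summand of \eqref{eq:y}. Only $\mu^2(\Piy\Piz)$, the weight $2^{-\omega(y_5)}$, and --- after multiplicatively splitting $\left(\frac{\pm y_5 y_6 z_2 z_3}{y_2 y_3}\right)$ --- the symbols $\left(\frac{y_5}{y_2 y_3}\right)$ and $\left(\frac{y_1 y_2 y_3 y_4 z_1 z_2}{y_5}\right) = \left(\frac{d}{y_5}\right)$ involve $y_5$. With $r := y_1 y_2 y_3 y_4 y_6 \Piz$, the inner sum over $y_5$ becomes $\sum_{y_5 \in [Y_5, DY_5]} \frac{\mu^2(y_5 r)}{2^{\omega(y_5)}} \left(\frac{y_5}{y_2 y_3}\right)\left(\frac{d}{y_5}\right)$; using that $\mu^2(y_5 r) \ne 0$ forces $\gcd(y_5, y_2 y_3) = 1$ (as $y_2, y_3 \mid r$), together with quadratic reciprocity in the form $\left(\frac{y_5}{m}\right) = \left(\frac{(-1)^{(m-1)/2} m}{y_5}\right)$ for $m$ odd positive, the product of symbols collapses to $\left(\frac{m_5}{y_5}\right)$, where $m_5 := (-1)^{(y_2 y_3 - 1)/2} y_1 y_4 z_1 z_2$ is an integer with $|m_5|$ squarefree and $|m_5| \ll \log(X)^{O(1)}$. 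If $m_5$ is not a perfect square (equivalently $m_5 \ne 1$), then $\left(\frac{m_5}{\cdot}\right)$ is induced by a nonprincipal primitive character of conductor $q^\ast \mid 4|m_5|$ with $\sqrt{q^\ast} \ll \log(X)^{O(1)}$; the coprimality of $y_5$ with $m_5$ being already built into $\mu^2(y_5 r)$, \cref{thm:siegelwalfisz} gives, for every $A > 0$, that this inner sum is $O_A(Y_5 \log(X)^{O(1) - A/1000} 2^{\omega(r)})$. Summing the remaining five variables trivially --- the leftover weight $\mu^2(r) 2^{\omega(r)}/(4^{\omega(y_1 y_2 y_3 y_4)} 2^{\omega(y_6)}) \le 2^{\omega(\Piz)} = O_K(1)$, and the number of remaining points times $Y_5$ is $\ll D^5 \PiY \ll X$ --- yields $|S(\mC, \mathbf{z})| = O_A(X \log(X)^{O(1) - A/1000})$, which for $A$ large is $O(X \log(X)^{-1/8 - 60})$.

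It remains to handle the exceptional case $m_5 = 1$, and this is where the hypothesis $d \ne 1$ is used. Here $|m_5| = 1$ forces $y_1 = y_4 = z_1 = 1$ and $z_2 = (-1)^{(y_2 y_3 - 1)/2} \in \{\pm 1\}$, so $d = y_2 y_3 z_2$; the fact that $d$ is not a perfect square, together with the congruence $y_2 y_3 \equiv 3 \bmod 4$ in the subcase $z_2 = -1$, forces $y_2 y_3 > 1$, so $y_2 y_3$ is not a perfect square. I would then run the identical argument with $y_6$ in place of $y_5$: the variable $y_6$ occurs only in $\mu^2(\Piy\Piz)$, $2^{-\omega(y_6)}$ and $\left(\frac{y_6}{y_2 y_3}\right)$, the last being a nonprincipal Dirichlet character modulo $y_2 y_3$ of conductor $\le 4 y_2 y_3 \ll \log(X)^{O(1)}$, so \cref{thm:siegelwalfisz} again gives the bound. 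I expect the main point --- routine in itself but conceptually the crux --- to be precisely this reciprocity bookkeeping: confirming that the $y_5$-sum produces a Jacobi symbol that is trivial only when $y_1 = y_4 = z_1 = 1$ and $z_2 = \pm 1$, in which case the hypothesis $d \ne 1$ guarantees that the companion $y_6$-sum is genuinely oscillating. In the first-moment regime considered here this plays the role that the analysis of linked indices plays in \cite[Section 5.6]{FK}.
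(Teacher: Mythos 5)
Your proposal is correct and uses the same tools (reduce to small $Y_1,\dots,Y_4$ via \Cref{lem:yiyjlarge}, then Siegel--Walfisz on whichever of $y_5$ or $y_6$ has a nontrivial modulus), but the case split is organized differently from the paper's. The paper splits on $y_2 y_3 \neq 1$ versus $y_2 y_3 = 1$: in the first case it sums over $y_6$ against the character $\left(\frac{y_6}{y_2 y_3}\right)$, and in the second (where $\left(\frac{y_5}{y_2 y_3}\right)$ is trivially $1$) it sums over $y_5$ against $\left(\frac{y_1 y_4 z_1 z_2}{y_5}\right)$; no quadratic reciprocity is needed. You instead use reciprocity to fold all $y_5$-dependent symbols into a single $\left(\frac{m_5}{y_5}\right)$ with $m_5=(-1)^{(y_2 y_3-1)/2}y_1 y_4 z_1 z_2$, split on $m_5 \neq 1$ versus $m_5 = 1$, and fall back to the $y_6$-sum in the latter case. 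The two splits overlap but are not identical (e.g.\ when both $y_2 y_3 \neq 1$ and $m_5 \neq 1$, the paper exploits $y_6$ while you exploit $y_5$); both correctly invoke the hypothesis $Y_1Y_2Y_3Y_4z_1z_2\neq 1$ to rule out the degenerate modulus. The paper's split is slightly cleaner as it avoids the reciprocity sign bookkeeping, while yours makes the role of $d\neq 1$ very transparent (it exactly captures when the $y_5$-character degenerates). Your reciprocity computation, the identification of $m_5=1$ as the only square value of the squarefree $m_5$, and the deduction $y_2 y_3>1$ in the exceptional case are all correct.
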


\begin{proof}
By \cref{lem:yiyjlarge} we may assume that $Y_1$, $Y_2$, $Y_3$ and $Y_4$ are small. Note
\[
|S(\mC, \mathbf{z})| \leq \sum_{\hyz{5}{6} \in \mC} \frac{1}{2^{\omega(y_1y_2y_3y_4)}} \left| \sum_{y_5,y_6 \in \mC} \frac{\mu^2(\Piy \Piz)}{2^{\omega(y_5y_6)}} \left( \frac{y_1y_4z_1z_2}{y_5} \right)\left( \frac{y_2y_3}{y_5} \right)\left( \frac{y_5y_6}{y_2y_3} \right) \right|.
\]

We once again investigate the inner sum and show that it lies in $O_A(X \log(X)^{5000-A/1000})$. Suppose $y_2y_3 \neq 1$. The squarefree indicator $\mu^2(\Piy\Piz)$ implies that the inner sum equals zero if $y_2y_3$ is a square not equal to $1$, hence we may assume that $y_2y_3$ is not a square. Then an application of \cref{thm:siegelwalfisz} shows that
\begin{align*}
\left| \sum_{y_5,y_6 \in \mC} \frac{\mu^2(\Piy \Piz)}{2^{\omega(y_5y_6)}} \left( \frac{y_1y_4z_1z_2}{y_5} \right)\left( \frac{y_2y_3}{y_5} \right)\left( \frac{y_5y_6}{y_2y_3} \right) \right| 
&\leq \sum_{y_5 \in \mC} \frac{1}{2^{\omega(y_5)}} \left| \sum_{y_6 \in \mC} \frac{\mu^2(\Piy \Piz)}{2^{\omega(y_6)}}\left( \frac{y_6}{y_2y_3} \right) \right| 
\\&= O_A(Y_5Y_6 \log(X)^{5000-A/1000} 2^{\omega(y_1y_2y_3y_4)}),
\end{align*}
noting that $\sqrt{Y_2Y_3} \leq \sqrt{M^2} = \log(X)^{5000}$.

Now assume $y_2y_3 = 1$. Then $y_1y_4z_1z_2 \neq 1$ as $Y_1Y_2Y_3Y_4z_1z_2 \neq 1$. Again we may assume that $y_1y_4z_1z_2$ is not a square, as then the sum equals zero. We have
\begin{align*}
\left| \sum_{y_5,y_6 \in \mC} \frac{\mu^2(\Piy \Piz)}{2^{\omega(y_5y_6)}} \left( \frac{y_1y_4z_1z_2}{y_5} \right)\left( \frac{y_2y_3}{y_5} \right)\left( \frac{y_5y_6}{y_2y_3} \right) \right| 
&\leq \sum_{y_6 \in \mC}\frac{1}{2^{\omega(y_6)}} \left| \sum_{y_5 \in \mC} \frac{\mu^2(\Piy \Piz)}{2^{\omega(y_5)}} \left( \frac{y_1y_4z_1z_2}{y_5} \right) \right|,
\end{align*}
which, by \cref{thm:siegelwalfisz}, is also in $O_A(Y_5Y_6 \log(X)^{5000-A/1000} 2^{\omega(y_1y_2y_3y_4)})$ for any $A > 0$. 

Summing trivially over the other variables and choosing $A$ large anough, we find $|S(\mC, \mathbf{z})| = O(X \log(X)^{-1/8 - 60})$.
\end{proof}

\begin{corollary}\label{cor:all_cuboids_small}
Let $\mC$ be a cuboid for which
\begin{itemize}
\item $Y_1$, $Y_2$, $Y_3$ and $Y_4$ are large; or
\item two of $Y_1$, $Y_2$, $Y_3$ and $Y_4$ are large and one of $Y_5$ and $Y_6$ is large; or
\item $Y_5$ and $Y_6$ are large and $Y_1Y_2Y_3Y_4z_1z_2 \neq 1$.
\end{itemize}
Then $|S(\mC, z)| = O(X \log(X)^{-1/8-60})$. 
\end{corollary}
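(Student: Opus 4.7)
The plan is purely to assemble a case-by-case check that dispatches each of the three listed configurations to one of the preceding Lemmas \ref{lem:yiyjlarge}, \ref{lem:Y1Y4large}, \ref{lem:Y2Y3large} or \ref{lem:Y5Y6large}; there is no new analytic input required.

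\textbf{First bullet.} If all of $Y_1,Y_2,Y_3,Y_4$ are large, then in particular $Y_1$ is large, so Lemma \ref{lem:Y1Y4large} gives the bound immediately.

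\textbf{Second bullet.} Suppose exactly two of $Y_1,Y_2,Y_3,Y_4$ are large and at least one of $Y_5,Y_6$ is large. I would split into subcases according to which pair $\{i,j\}\subset\{1,2,3,4\}$ is large: if $1\in\{i,j\}$, then $Y_1$ is large and Lemma \ref{lem:Y1Y4large} applies; if $\{i,j\}=\{2,3\}$, Lemma \ref{lem:Y2Y3large} applies. The only remaining cases are $\{i,j\}\in\{\{2,4\},\{3,4\}\}$, and in each of these one checks that for whichever of $Y_5,Y_6$ is large, the pair $(k,\ell)\in\{(1,5),(4,5),(2,6),(3,6)\}$ containing both a large $Y_k$ and a large $Y_\ell$ can be produced (e.g.\ $\{2,4\}$ with $Y_5$ large yields $(4,5)$; with $Y_6$ large yields $(2,6)$; similarly for $\{3,4\}$). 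Lemma \ref{lem:yiyjlarge} then gives the bound.

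\textbf{Third bullet.} This is exactly the hypothesis of Lemma \ref{lem:Y5Y6large}, which gives the bound directly.

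In each case the bound is $O(X\log(X)^{-1/8-60})$, as required. There is no substantive obstacle; the only mild care needed is the bookkeeping in the second bullet to confirm that the six subcases of \{two large among $Y_1,\dots,Y_4$\} $\times$ \{$Y_5$ large or $Y_6$ large\} are all covered by the available lemmas, which the enumeration above verifies.
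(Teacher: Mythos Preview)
Your proposal is correct and follows essentially the same approach as the paper: both dispatch each bullet to one of Lemmas~\ref{lem:yiyjlarge}, \ref{lem:Y1Y4large}, \ref{lem:Y2Y3large}, \ref{lem:Y5Y6large} by a short case analysis. Your enumeration of the second bullet is in fact slightly more careful than the paper's (which only names the pair $\{2,4\}$ explicitly), and your handling of the remaining pairs $\{2,4\},\{3,4\}$ via Lemma~\ref{lem:yiyjlarge} is exactly right; the only cosmetic slip is writing ``exactly two'' rather than ``at least two'', but your argument already covers the overlap since if three or more of $Y_1,\dots,Y_4$ are large then either $Y_1$ is large or both $Y_2,Y_3$ are.
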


\begin{proof}
The first case is handled by either \cref{lem:Y1Y4large} or \cref{lem:Y2Y3large}. Suppose we are in the second case. Again by \cref{lem:Y1Y4large} and \cref{lem:Y2Y3large}, it suffices to show this when $Y_2$ and $Y_4$ are large. Both the case that $Y_5$ is large and the case that $Y_6$ is large falls under \cref{lem:yiyjlarge}.

Finally, the third case is precisely \cref{lem:Y5Y6large}.
\end{proof}

\begin{proof}[Proof of \cref{thm:mainanalytic}.]
As $X_n$ and $Y_n$ always contain the trivial character, we have
\[
\sum_{|n| \leq X \rm{,\, sq.\, f.}} |X_n| \geq \sum_{|n| \leq X \rm{,\, sq.\, f.}} 1 = \frac{2}{\zeta(2)} X + O(\sqrt{X}),
\]
and the same holds for $Y_n$. We established an upper bound in equation~\eqref{eq:yz}, and \cref{lem:largeproduct} show that this upper bound is approximated up to an error of $O(X \log(X)^{-1/2})$ by the following sum over cuboids:
\[
\sum_{\Piz \leq \textrm{rad}(2\Delta)} \sum_{\substack{\mC \textrm{ cuboid} \\ \PiY \leq x} } |S(\mC, \mathbf{z})|.
\]
\cref{lem:smallcuboids} and the previous corollary, noting that we have $O(\log(X)^{60})$ cuboids, then show that this equals, up to an error of $O(X \log(X)^{-1/8})$, the contribution of the cuboids with $Y_1Y_2Y_3Y_4z_1z_2 = 1$, that is, corresponding to $d = 1$. This contribution is $\frac{2}{\zeta(2)} X + O(\sqrt{X})$.

We conclude that
\[
\sum_{|n| \leq X \rm{,\, sq.\, f.}} |X_n| = \frac{2}{\zeta(2)} X + O(X \log(X)^{-1/8}),
\]
and similarly for $Y_n$.
\end{proof}

\section{\texorpdfstring{The $4$-rank in the generic case}{The 4-rank in the generic case}} 
\label{sec:generic_case}
For a quadratic extension $K/\mathbb{Q}$, we can combine the main analytic result, \Cref{thm:mainanalytic}, with the algebraic results of \Cref{sec:selmer} to give a formula for the $4$-rank of $\textup{Cl}(K(\sqrt{n}))$ which is valid for $100\%$ of squarefree $n$. As before, for an integer $n$ write $\omega_{\textup{inert}}(n)$ for the number of distinct prime factors of $n$ which are inert in $K/\mathbb{Q}$. When $n$ is squarefree, write $\Delta_n$ for the discriminant of $\mathbb{Q}(\sqrt{n})$ (thus $\Delta_n \in \{n,4n\}$). We then have the following.

\begin{theorem} 
\label{thm:combined_main}
Let $K/\mathbb{Q}$ be a quadratic extension with discriminant $\Delta$. Then for $100\%$ of squarefree $n$ we have 
\begin{equation}\label{eq:main_theorem_explicit_formula}
\textup{rk}_4\, \textup{Cl}(K(\sqrt{n}))=\omega_{\textup{inert}}(\Delta_n)+\omega(\Delta)+\dim_{\mathbb{F}_2}\textup{Cl}(K)[2]~-~\begin{cases}3~~&~~K/\mathbb{Q}\textup{ real,}\\ 2~~&~~K/\mathbb{Q}\textup{ imaginary}. \end{cases}\end{equation}
\end{theorem}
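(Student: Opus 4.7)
The approach is to assemble the pieces already developed: combine the algebraic identification of $\textup{rk}_4\,\textup{Cl}(K_n)$ with a Selmer group over $K$ (\Cref{thm:main_algebraic_thm}), the analytic vanishing of $X_n$ and $Y_n$ (\Cref{thm:mainanalytic}), and the explicit dimension formula for $\textup{Sel}_{\chi_n}(G_K,\Z/2\Z)$ (\Cref{prop:formula_for_sel_n}). First I would argue that the set of squarefree $n$ satisfying all the required ``genericity'' hypotheses has density one: the hypotheses of \Cref{lCores2} hold for $100\%$ of $n$ by \Cref{lCores0} and (via \Cref{cTuran}) the condition that the odd ramified primes cover $\textup{Cl}(K,8\infty)$, while $X_n$ and $Y_n$ are trivial for $100\%$ of $n$ by \Cref{thm:mainanalytic}. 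All subsequent steps apply to such generic $n$.

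Next, using \Cref{lClass}, I would identify $\textup{rk}_4\,\textup{Cl}(K_n)$ with $\dim_{\F_2} 2\cdot \textup{Sel}(G_{K_n},\Z/4\Z)$, and split this dimension as
\[
\dim_{\F_2} 2\cdot \textup{Sel}(G_{K_n},\Z/4\Z)=\dim_{\F_2}\ker(\textup{lift}+\textup{cores})+\dim_{\F_2}\textup{im}(\textup{lift}+\textup{cores}),
\]
where the map is the one of \Cref{thm:main_algebraic_thm}. For the kernel, the exact sequence of \Cref{thm:main_algebraic_thm} gives
\[
\dim_{\F_2}\ker(\textup{lift}+\textup{cores})=\dim_{\F_2}\textup{Sel}_{\chi_n}(G_K,\Z/2\Z)-1,
\]
and \Cref{prop:formula_for_sel_n} evaluates the right-hand side as $\omega(\Delta)+\omega_{\textup{inert}}(\Delta_n)-2-\epsilon$, where $\epsilon=1$ if $K/\Q$ is real and $\epsilon=0$ otherwise.

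For the image, I would observe that the lift-and-corestrict map factors through the usual corestriction $\textup{Sel}(G_{K_n},\Z/4\Z)\to\textup{Sel}(G_K,\Z/4\Z)$, whose image equals $\textup{Sel}(G_K,\Z/2\Z)$ by \Cref{lCores2}. Applying \Cref{lClass} to this last group gives
\[
\dim_{\F_2}\textup{im}(\textup{lift}+\textup{cores})=\dim_{\F_2}\textup{Cl}^{\vee}(K)[2]=\dim_{\F_2}\textup{Cl}(K)[2].
\]
Adding the kernel and image dimensions yields
\[
\textup{rk}_4\,\textup{Cl}(K_n)=\omega_{\textup{inert}}(\Delta_n)+\omega(\Delta)+\dim_{\F_2}\textup{Cl}(K)[2]-(2+\epsilon),
\]
which is exactly \eqref{eq:main_theorem_explicit_formula}.

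Since every ingredient is already in place, there is no real obstacle; the only point requiring mild care is bookkeeping the kernel/image splitting and verifying that the image of lift$+$cores is precisely $\textup{Sel}(G_K,\Z/2\Z)$ (rather than merely contained in it), which is exactly what \Cref{lCores2} provides under the genericity hypotheses guaranteed by \Cref{cTuran}.
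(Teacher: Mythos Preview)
Your proposal is correct and follows essentially the same route as the paper: both combine \Cref{thm:main_algebraic_thm}, \Cref{lCores2}, \Cref{lCores0}, \Cref{lClass}, \Cref{thm:mainanalytic}, and \Cref{prop:formula_for_sel_n} to arrive at the formula. The paper states the resulting identity $\dim_{\mathbb{F}_2} 2\textup{Sel}(G_{K(\sqrt{n})},\Z/4\Z)=\dim_{\mathbb{F}_2}\textup{Sel}_{\chi_n}(G_K,\Z/2\Z)+\dim_{\mathbb{F}_2}\textup{Sel}(G_K,\Z/2\Z)-1$ in one line, whereas you unpack it via the kernel/image decomposition of the lift$+$cores map; your observation that this map surjects onto $\textup{Sel}(G_K,\Z/2\Z)$ (because every $x\in\textup{Sel}(G_{K_n},\Z/4\Z)$ is a lift of $2x$) is exactly the content needed to justify that step.
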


\begin{proof}
Combining \Cref{thm:main_algebraic_thm} with \Cref{lCores2} and \Cref{lCores0} we see that, for $100\%$ of squarefree $n$, we have 
\[\dim_{\mathbb{F}_2} 2\textup{Sel}(G_{K(\sqrt{n})},\mathbb{Z}/4\mathbb{Z})=\dim_{\mathbb{F}_2} \textup{Sel}_{\chi_n}(G_K,\mathbb{Z}/2\mathbb{Z})+\dim_{\mathbb{F}_2} \textup{Sel}(G_K,\mathbb{Z}/2\mathbb{Z})-1.\]
By \Cref{lClass} we can  replace $\dim_{\mathbb{F}_2}2\textup{Sel}(G_{K(\sqrt{n})},\mathbb{Z}/4\mathbb{Z})$ with $\textup{rk}_4\, \textup{Cl}(K(\sqrt{n}))$, and we can similarly replace $\dim_{\mathbb{F}_2} \textup{Sel}(G_K,\mathbb{Z}/2\mathbb{Z})$ with $\dim_{\mathbb{F}_2}\textup{Cl}(K)[2]$. The result now follows by combining \Cref{thm:mainanalytic} with \Cref{prop:formula_for_sel_n}.
\end{proof}

\begin{remark}
In the case that $K/\mathbb{Q}$ is imaginary, genus theory gives $\dim_{\mathbb{F}_2}\textup{Cl}(K)[2]=\omega(\Delta)-1,$ so that the formula of \Cref{thm:combined_main} simplifies to show that, for $100\%$ of squarefree $n$, we have 
\[\textup{rk}_4\, \textup{Cl}(K(\sqrt{n}))=\omega_{\textup{inert}}(\Delta_n)+2\omega(\Delta)-3. \]
\end{remark}

\begin{remark} \label{error_term_discussion}
Let us temporarily denote by $f(n)$ the quantity on the right hand side of \eqref{eq:main_theorem_explicit_formula}.  In principle, for $X>0$, the proof of \Cref{thm:combined_main} allows one to obtain an explicit bound for the number
\begin{equation} \label{error_term_formula}
\left|\{n \textup{ squarefree} : |n| \leq X, \textup{rk}_4\, \textup{Cl}(K(\sqrt{n})) \neq f(n)\}\right|
\end{equation}
of exceptional $n$. Contributions to \eqref{error_term_formula} arise both from the use of the results of \Cref{sec:turans_trick} (entering via \Cref{lCores0,lCores2}, and \Cref{thm:main_algebraic_thm}), and the bounds on the average size of $X_n$ and $Y_n$ provided by \Cref{thm:mainanalytic}. As in the statement of that theorem, the latter contributes $O(X \log(X)^{-1/8})$ to \eqref{error_term_formula}. The former has a more serious effect on the quality of the error term with a contribution of size $O(X \log \log(X)^{-1})$. With some extra work we expect that one can obtain an error term of size $O(X \log(X)^{-c})$ for some absolute constant $c > 0$.
\end{remark}

An easy consequence of \Cref{thm:combined_main} is the following result which  determines the distribution of the $4$-rank of $\textup{Cl}(K(\sqrt{n}))$ as $n$ varies. Let $A(X) = \log \log X/2$ and $B(X) = \sqrt{A(X)}$.

\begin{corollary} \label{cor:erdos_kac}
Fix a quadratic extension $K/\Q$. We have for all real $z$
\[
\lim_{X \rightarrow \infty} \frac{|\{n : |n| \leq X, \ n \textup{ squarefree}, \textup{rk}_4\, \textup{Cl}(K(\sqrt{n}))  - A(X) < z B(X)\}|}{|\{n : |n| \leq X, \ n \textup{ squarefree}\}|} = \Phi(z).
\]
\end{corollary}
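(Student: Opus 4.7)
My plan is to reduce \Cref{cor:erdos_kac} to a known Erd\H{o}s--Kac-type statement for the function $\omega_{\textup{inert}}(\Delta_n)$, using \Cref{thm:combined_main} to express $\textup{rk}_4\,\textup{Cl}(K(\sqrt{n}))$ in terms of this function up to a constant. Concretely, let
\[
c_K = \omega(\Delta) + \dim_{\mathbb{F}_2}\textup{Cl}(K)[2] - \begin{cases} 3 & K/\mathbb{Q}\textup{ real,} \\ 2 & K/\mathbb{Q}\textup{ imaginary}, \end{cases}
\]
so that \Cref{thm:combined_main} gives $\textup{rk}_4\,\textup{Cl}(K(\sqrt{n})) = \omega_{\textup{inert}}(\Delta_n) + c_K$ for $100\%$ of squarefree $n$. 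Writing $\mathcal{E}(X)$ for the set of squarefree integers $|n|\leq X$ on which this equality fails, \Cref{thm:combined_main} furnishes the density bound $|\mathcal{E}(X)|/|\{n\colon |n|\leq X,\,n\text{ squarefree}\}|\to 0$.

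Next, I would invoke the variant of the Erd\H{o}s--Kac theorem \cite[Theorem 1.3]{KLO} applied to the function $\omega_{\textup{inert}}(\Delta_n)$, which is exactly the count of prime divisors of $\Delta_n$ whose Frobenius class in $\Gal(K/\mathbb{Q})$ is the nontrivial element. Since $\Delta_n\in\{n,4n\}$, the value of $\omega_{\textup{inert}}(\Delta_n)$ differs from $\omega_{\textup{inert}}(n)$ by a uniformly bounded quantity (at most $1$, accounting for the possible contribution of the prime~$2$), so the framework of \cite{KLO} applies and yields
\[
\lim_{X \rightarrow \infty} \frac{|\{n\colon |n| \leq X,\ n \textup{ squarefree},\ \omega_{\textup{inert}}(\Delta_n)- A(X) < z\, B(X)\}|}{|\{n\colon |n| \leq X,\ n \textup{ squarefree}\}|} = \Phi(z).
\]

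To conclude, I would observe that for $n\notin\mathcal{E}(X)$ we have
\[
\frac{\textup{rk}_4\,\textup{Cl}(K(\sqrt{n})) - A(X)}{B(X)} = \frac{\omega_{\textup{inert}}(\Delta_n) - A(X)}{B(X)} + \frac{c_K}{B(X)},
\]
and $c_K/B(X)\to 0$ since $B(X)\to\infty$. Fixing $\varepsilon>0$, for $X$ large one has $|c_K|/B(X) < \varepsilon$, so the event $\{\textup{rk}_4\,\textup{Cl}(K(\sqrt{n})) - A(X) < zB(X)\}$ is sandwiched between the events $\{\omega_{\textup{inert}}(\Delta_n) - A(X) < (z\pm\varepsilon)B(X)\}$ outside of $\mathcal{E}(X)$. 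Taking the $\limsup$ and $\liminf$ and using the continuity of $\Phi$ together with $\varepsilon\to 0$ then gives the desired limit $\Phi(z)$. The main (rather mild) point requiring care is this sandwiching argument handling both the bounded additive shift by $c_K$ and the exceptional set $\mathcal{E}(X)$; no new analytic or algebraic input beyond \Cref{thm:combined_main} and \cite[Theorem 1.3]{KLO} is needed.
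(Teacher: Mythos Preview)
Your proposal is correct and follows essentially the same approach as the paper: both reduce to an Erd\H{o}s--Kac statement for $\omega_{\textup{inert}}$ via \Cref{thm:combined_main}, invoke \cite[Theorem~1.3]{KLO}, and finish with an $\varepsilon$-sandwiching argument using the continuity of $\Phi$. The only cosmetic difference is that the paper explicitly verifies via Chebotarev that the mean and variance parameters $A'(X),B'(X)$ required by \cite{KLO} agree with $A(X),B(X)^2$ up to $O(1)$, whereas you fold this (along with the bounded discrepancy between $\omega_{\textup{inert}}(\Delta_n)$ and $\omega_{\textup{inert}}(n)$) directly into the sandwiching step.
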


\begin{proof}
 Define
\[
F(z, X) := \frac{|\{n : |n| \leq X, n \textup{ squarefree}, \ \omega_{\text{inert}}(\Delta_n) - C_K - A(X) < z B(X)\}|}{|\{n : |n| \leq X, n \textup{ squarefree}\}|},
\]
where $C_K$ is a constant depending only on $K$. By Theorem \ref{tMain} it suffices to show that $\lim_{X \rightarrow \infty} F(z, X) = \Phi(z)$. The Chebotarev density theorem implies that
\[
A'(X) := \sum_{p \leq X} \frac{\omega_{\text{inert}}(p)}{p} = A(X) + O(1), \quad B'(X) := \sum_{p \leq X} \frac{\omega_{\text{inert}}(p)^2}{p} = B(X)^2 + O(1).
\]
Then \cite[Theorem 1.3]{KLO} shows that
\[
\lim_{X \rightarrow \infty} \frac{|\{n : |n| \leq X, n \textup{ squarefree}, \ \omega_{\text{inert}}(n) - A'(X) < z \sqrt{B'(X)}\}|}{|\{n : |n| \leq X, n \textup{ squarefree}\}|} = \Phi(z).
\]
But for all $\epsilon > 0$, there is $M > 0$ such that for all $X \geq M$
\begin{align*}
\omega_{\text{inert}}(\Delta_n) - C_K - A(X) < z B(X) &\Rightarrow \omega_{\text{inert}}(n) - A'(X) < (z + \epsilon) \sqrt{B'(X)} \\
\omega_{\text{inert}}(n) - A'(X) < (z - \epsilon) \sqrt{B'(X)} &\Rightarrow  \omega_{\text{inert}}(\Delta_n) - C_K - A(X) < z B(X).
\end{align*}
Hence for all $\epsilon > 0$
\[
\limsup_{X \rightarrow \infty} F(z, X) \leq \Phi(z + \epsilon), \quad \liminf_{X \rightarrow \infty} F(z, X) \geq \Phi(z - \epsilon),
\]
which readily implies the corollary.
\end{proof}

\end{document}